\numberwithin{equation}{section}
\newtheorem{theorem}{Theorem}
\newtheorem{proposition}[theorem]{Proposition}
\newtheorem{lemma}[theorem]{Lemma}
\newtheorem{corollary}[theorem]{Corollary}
\newtheorem{conjecture}[theorem]{Conjecture}
\theoremstyle{remark}
\newtheorem*{remark}{Remark}
\newtheorem*{remarks}{Remarks}
\def\fl#1{\left\lfloor#1\right\rfloor}
\def\cl#1{\left\lceil#1\right\rceil}
\def\si{\sigma}
\newcommand{\Z}{\mathbb{Z}}
\newcommand{\Q}{\mathbb{Q}}
\newcommand{\R}{\mathbb{\R}}
\def\al{\alpha}
\def\la{\lambda}
\def\Ga{\Gamma}
\begin{document}

\title[Truncations of Dwork's lemma]{Truncated versions of Dwork's lemma for exponentials
of power series and $p$-divisibility of arithmetic functions}

\author[C. Krattenthaler and 
T.\,W. M\"uller]{C. Krattenthaler$^{\dagger}$ and
T. W. M\"uller$^*$} 

\address{$^{\dagger}$Fakult\"at f\"ur Mathematik, Universit\"at Wien, 
Oskar-Morgenstern-Platz~1, A-1090 Vienna, Austria.
WWW: {\tt http://www.mat.univie.ac.at/\lower0.5ex\hbox{\~{}}kratt}.}

\address{$^*$School of Mathematical Sciences, Queen Mary \& Westfield College, 
University of London,
Mile End Road, London E1 4NS, United Kingdom.}

\thanks{$^\dagger$Research partially supported by the Austrian
Science Foundation FWF, grants Z130-N13 and S50-N15,
the latter in the framework of the Special Research Program
``Algorithmic and Enumerative Combinatorics"\newline\indent
$^*$Research supported by Lise Meitner Grant M1661-N25 of the Austrian
Science Foundation FWF}

\subjclass[2010]{Primary 20K01;
Secondary 05A15 05E99 11A07 20E06 20E07 20E08}

\keywords{divisibility by prime powers, exponential of power series,
subgroup numbers,
finite Abelian $p$-groups, Kostka--Foulkes polynomial, supercongruences}

\begin{abstract}
(Dieudonn\'e and) Dwork's lemma gives a necessary and sufficient
condition for an exponential of a formal power series $S(z)$ with
coefficients in $\mathbb Q_p$ to have coefficients in $\mathbb Z_p$. 
We establish theorems on the $p$-adic valuation of the coefficients
of the exponential of $S(z)$, assuming weaker conditions on the
coefficients of $S(z)$ than in Dwork's lemma. As applications,
we provide several results concerning lower bounds on the
$p$-adic valuation of the number of permutation representations
of finitely generated groups. In particular, we give fairly tight
lower bounds in the case of an arbitrary finite Abelian $p$-group,
thus generalising numerous results in special cases that had appeared
earlier in the literature. Further applications include sufficient
conditions for ultimate periodicity of subgroup numbers modulo~$p$
for free products of finite Abelian $p$-groups, results on
$p$-divisibility of permutation numbers with restrictions on their
cycle structure, and a curious
``supercongruence" for a certain binomial sum.  
\end{abstract}

\maketitle

\section{Introduction}

\noindent For a finitely generated group $\Gamma$ and a positive integer $n$,
let $s_n(\Gamma)$ denote the number of subgroups of index $n$ in
$\Gamma$. Moreover, for a non-negative integer $n$, set  
\[
h_n(\Gamma)= 
\vert\mathrm{Hom}(\Gamma, S_n)\vert,
\]
the number of representations of $\Gamma$ in the symmetric group~$S_n$.
It is well-known that the sequences $\big(s_n(\Gamma)\big)_{n\geq1}$ and
$\big(h_n(\Gamma)\big)_{n\geq0}$ are related via the (formal) identity 
\begin{equation} 
\label{eq:HS}
\sum_{n\ge0} \frac {h_n(\Gamma)} {n!} z^n = \exp\left(
\sum_{n\ge1} \frac {s_n(\Gamma)} {n} z^n\right); 
\end{equation}
cf., for instance, \cite[Prop.~1]{DM}. By differentiation of both sides, 
Equation~\eqref{eq:HS} is seen to be equivalent to the recurrence relation
\begin{equation}
\label{Eq:HallTransform}
h_n(\Gamma) = \sum_{k=1}^n ( n-k+1)_{k-1}\, s_k(\Gamma)\,
h_{n-k}(\Gamma),\quad \quad n\geq1, 
\end{equation}
where, for
$\alpha\in\mathbb{C}$ and $n\in\mathbb{N}_0$,  
\[
(\alpha)_n = \begin{cases}
\alpha(\alpha+1)\cdots(\alpha+n-1),& n>0,\\
1,& n=0,\end{cases}
\]
is the Pochhammer symbol. 

For a prime number $p$ and a
  positive integer $n$, the $p$-adic valuation $v_p(n)$ is defined as the
  exponent of the largest power of $p$ dividing $n$.
At the origin of the research presented here stands
the aim of deriving good lower
bounds for the $p$-adic valuation 
$v_p\big(h_n(G)\big)$, where $p$ is a prime and $G$ is a finite Abelian
$p$-group, which are sharp for infinitely many $n$. As a matter of fact,
our main results turn out to be much more general than that.  

It would appear that investigation into the above problem started in 1951
with the paper \cite{ChHMAA} by Chowla, Herstein, and Moore, which,
among other things, contains the result that\footnote{For an integer
  $m\geq2$, we denote by $C_m$ the cyclic group of order $m$.}  
\begin{equation} \label{eq:hnC2} 
v_2\big(h_n(C_2)\big) \geq \fl{\frac{n+2}{4}} = 
\fl{\frac{n}{2}} - \fl{ \frac{n}{4}};  
\end{equation}
cf.\ \cite[Theorem~10]{ChHMAA}. The more general result to the effect that 
\begin{equation}
\label{Eq:CpVal}
v_p\big(h_n(C_p)\big) \geq \fl{ \frac{n}{p}} - 
\fl{\frac{n}{p^2}},\quad n\geq1, 
\end{equation}
for arbitrary prime numbers $p$, 
was established several times by various sets of
authors, apparently starting with Dress and Yoshida 
 \cite{DrYoAA} in 1991, followed by Grady and Newman
 \cite{GrNeAA}, and by Ochiai \cite[Sec.~3.2]{Ochiai}, among others. 
Ochiai's paper also contains somewhat sharper results for small
 primes; for instance, he shows that  
\[
v_2\big(h_n(C_2)\big) = 
(n+5)/4, \quad \text{for }n\equiv 3\pmod{4},
\]
which is an improvement over (\ref{Eq:CpVal}) in that case. 
The most general results in this direction, prior to the present
paper, are due to Katsurada, Takegahara, and  Yoshida, who establish
lower bounds for the $p$-adic valuation $v_p\big(h_n(G)\big)$ in the case when
$G$ is a finite Abelian $p$-group of rank (minimal number of
generators) at most $2$;
cf. Theorems~1.3 and 1.4 in \cite{KaTYAA}. For instance, they show
that, for a prime $p$ and integers $\ell, m$ with $\ell \geq m\geq0$, 
\[
v_p\big(h_n(C_{p^\ell} \times C_{p^m})\big) \geq
\sum\limits_{j=1}^\ell\fl{\frac{n}{p^j}} - (\ell - m)
\fl{\frac{n}{p^{\ell+1}}}.
\]
In the present paper, we start from an arbitrary sequence $s_1, s_2,
s_3, \ldots\in \Q_p$ and define a sequence $h_0, h_1,
h_2,\ldots\in\Q_p$ by means of the identity 
\[
H(z):= \sum_{n\geq0} \frac {h_n} {n!} z^n = \exp\left(S(z)\right),
\]
where $S(z):= \sum_{n\geq1} \frac{s_n}{n} z^n$ is the logarithmic
generating function for the sequence $(s_n)_{n\geq1}$; in other words,
the sequences $(s_n)_{n\geq1}$ and
$(h_n)_{n\geq0}$ are related via (an abstract version of) the
recurrence relation (\ref{Eq:HallTransform}). Our main idea is to
establish \textit{truncated versions} of an integrality criterion
essentially due to Dwork, sometimes also attributed to Dieudonn\'e and
Dwork (cf. \cite[Ch.~14, p.~76]{LangAA} and the theorem on p. 409 of
\cite{RobeAA} with $A = \Z_p$ and $B = \Q_p$). We begin by stating
this integrality criterion in the precise form needed here. 

\begin{lemma} \label{lem:dwork}
For a prime number $p,$
let\/ $S(z)$ and $H(z)$ be formal power series with coefficients in
$\Q_p$ related by
\begin{equation} \label{eq:H-S}
H(z)=\exp\big(S(z)\big). 
\end{equation}
Then $H(z)$ has all coefficients in $\Z_p$ if, and only if,
\begin{equation} \label{eq:pS1d}
S(z^{p})-pS(z)\in p\Z_p[[z]]. 
\end{equation}
\end{lemma}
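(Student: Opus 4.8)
The plan is to reformulate both conditions so that they refer to the single auxiliary series $g(z):=H(z^p)/H(z)^p=\exp\big(S(z^p)-pS(z)\big)$, where the last equality is a legitimate formal identity because $S(0)=0$ forces $H(0)=1$, so that $H$, $H(z^p)$ are units in the relevant power series ring and $\exp/\log$ are defined termwise. The bridge between the hypothesis \eqref{eq:pS1d} and the statement $g\in 1+p\Z_p[[z]]$ is an elementary exponential/logarithm lemma: for $\phi(z)\in z\Q_p[[z]]$ one has $\phi\in p\Z_p[[z]]$ if and only if $\exp(\phi)\in 1+p\Z_p[[z]]$. I would prove this by the valuation estimates $v_p(p^k/k!)\ge 1$ and $v_p(p^k/k)\ge 1$ for $k\ge1$, applied term by term to $\exp(\phi)=\sum_k\phi^k/k!$ and to $\log(1+pu)=\sum_k(-1)^{k-1}(pu)^k/k$, respectively. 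Granting this, condition \eqref{eq:pS1d} is equivalent to $g\in 1+p\Z_p[[z]]$, and it remains to prove that $H\in\Z_p[[z]]$ if and only if $g\in 1+p\Z_p[[z]]$.

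The forward implication is quick. The basic tool throughout is the Frobenius congruence $P(z)^p\equiv P(z^p)\pmod{p\Z_p[[z]]}$, valid for every $P\in\Z_p[[z]]$, which follows by reducing modulo $p$ and using $\bar a^p=\bar a$ in $\mathbb{F}_p$. If $H\in 1+z\Z_p[[z]]$, this congruence gives $H(z^p)=H(z)^p+p\cdot(\text{integral})$, and since $H(z)^p$ is a unit in $\Z_p[[z]]$ we may divide to obtain $g\in 1+p\Z_p[[z]]$.

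For the converse I would write $H(z)=\sum_{n\ge0}c_nz^n$ with $c_0=1$ and prove $c_n\in\Z_p$ by induction on $n$, comparing the coefficient of $z^n$ on both sides of $H(z^p)=H(z)^p\,g(z)$. Set $P(z)=\sum_{j<n}c_jz^j\in\Z_p[z]$ (integral by the induction hypothesis) and write $H=P+Q$ with $Q=c_nz^n+\cdots$. Expanding $H^p$ and discarding the contributions of $Q^2,Q^3,\dots$ (which have order $>n$), one finds $[z^n]H^p=[z^n]P^p+p\,c_n$; moreover, since $g-1\in p\Z_p[[z]]$ while the coefficients of $H^p$ in degrees below $n$ lie in $\Z_p$, the factor $g$ perturbs this only by an element of $p\Z_p$. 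On the left, $[z^n]H(z^p)=[z^n]P(z^p)\in\Z_p$, because the tail $Q(z^p)$ lives in degrees $\ge pn>n$.

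Here is the crux, and the point at which the naive computation stalls: the comparison as stated yields only $p\,c_n\in\Z_p$, which is not enough. The extra factor of $p$ comes from invoking the Frobenius congruence for the \emph{integral} polynomial $P$, namely $[z^n]P^p\equiv[z^n]P(z^p)\pmod p$. Since $[z^n]H(z^p)=[z^n]P(z^p)$ exactly, the two $\Z_p$-quantities $[z^n]P^p$ and $[z^n]H(z^p)$ cancel modulo $p$ in the coefficient identity, upgrading $p\,c_n\in\Z_p$ to $p\,c_n\in p\Z_p$, that is, $c_n\in\Z_p$. I expect this Frobenius cancellation to be the only genuinely delicate step; the rest is routine bookkeeping with valuations and orders of vanishing.
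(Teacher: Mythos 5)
Your proof is correct, but there is nothing in the paper to compare it against: the paper does not prove Lemma~\ref{lem:dwork} at all. It states the criterion as a known result of Dieudonn\'e--Dwork and delegates the proof to the literature (Ch.~14 of \cite{LangAA} and p.~409 of \cite{RobeAA}); the lemma is then only ever \emph{used} as a black box, e.g.\ applied to $\widetilde S(z)$ in the proof of Theorem~\ref{thm:pdiv1}. What you have written is essentially the standard argument from those references, so your proposal makes this step self-contained: pass to $g(z)=H(z^p)/H(z)^p=\exp\bigl(S(z^p)-pS(z)\bigr)$, translate the additive condition \eqref{eq:pS1d} into the multiplicative condition $g\in 1+p\Z_p[[z]]$ via the estimates $v_p(p^k/k!)\ge 1$ and $v_p(p^k/k)\ge 1$ for $k\ge1$, prove the forward implication by the Frobenius congruence $P(z)^p\equiv P(z^p)$ mod $p\Z_p[[z]]$ and invertibility of $H(z)^p$, and prove the converse by induction on the coefficients of $H$, with the Frobenius congruence for the truncation $P$ supplying the crucial extra factor of $p$. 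One point in the converse deserves to be made explicit: in the convolution $[z^n]\bigl(H^p g\bigr)=\sum_{j\le n}[z^j]\bigl(H^p\bigr)\,[z^{n-j}](g)$, the term with $j=n$ involves the unknown coefficient $c_n$, so your assertion that ``the factor $g$ perturbs this only by an element of $p\Z_p$'' needs either the observation that $g(0)=1$ (both $H(z^p)$ and $H(z)^p$ have constant term $1$), whence $g-1$ has no constant term and the $j=n$ term vanishes, or else one collects the terms into $p\,c_n\bigl(1+p\,[z^0](g-1)\bigr)\in p\Z_p$ and divides by the unit. Either way the induction closes; this is bookkeeping, not a gap.
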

\noindent Our results do not attempt to provide necessary and
sufficient conditions  
as in Lemma~\ref{lem:dwork} (that would perhaps seem overly ambitious);
rather, they provide {\it sufficient\/} conditions 
for a {\it certain} $p$-divisibility. Thus, we obtain sufficient
conditions for an exponential of a power series to have coefficients 
which are divisible by certain scales of powers of a given prime
$p$. Which scales one should think of, we gleaned off the scales one
finds in Theorems~1.2--1.4 of \cite{KaTYAA}, which, from our point of
view, provide $p$-divisibility results for the ``simplest"
cases. Theorems~\ref{thm:pdiv1} and \ref{thm:pdiv12} in Section
\ref{sec:pdiv1} vastly generalise these results. We would like to
point out though that the proofs of our theorems draw on a key idea in
the proof of Theorem~1.2 in \cite{KaTYAA}: the idea of comparison with
a reference power series. 

In our first main result, Theorem~\ref{thm:pdiv1},
we ``truncate" Condition~\eqref{eq:pS1d} in
Lemma~\ref{lem:dwork} in the sense that  
we require \eqref{eq:pS1d} to hold only up to (and including) the
coefficient of $z^{p^l-1}$, for some given~$l$ (see \eqref{eq:pS1}),
assuming only certain weaker conditions for coefficients of higher
powers  
(see \eqref{eq:p^m} and \eqref{eq:sdiff}). This has the effect of
replacing $p$-integrality of the coefficients of the series $H(z)$ in
\eqref{eq:H-S} by the weaker $p$-divisibility result for the
coefficients given in \eqref{eq:vpH1}. (These results have to be
compared with the bound
\begin{equation} \label{eq:hnn!} 
v_p(h_n)\ge v_p(n!)=\sum_{s\geq1} \fl{\frac{n}{p^s}}
\end{equation}
implied by Lemma~\ref{lem:dwork} provided that \eqref{eq:pS1d} holds,
the equality on the right being due to 
Legendre's formula \cite[p.~10]{LegeAA} for the $p$-adic
valuation of $n!$.)

As it turns out, for $p=2$, there exists a case not covered by
Theorem~\ref{thm:pdiv1} (since Condition~(\ref{eq:sdiff}) is violated
by a small margin), where one may nevertheless even \textit{improve}
on the bound given in (\ref{eq:vpH1}). This is the topic of
Theorem~\ref{thm:pdiv12}, which vastly generalises Theorem~1.4 of
\cite{KaTYAA}.  
Section~\ref{sec:pdiv1} is devoted to the proofs of
Theorems~\ref{thm:pdiv1} and \ref{thm:pdiv12}, while also providing
considerable further comment as well as several corollaries. 

Section~\ref{sec:pdiv2} addresses the case where we have less precise
information on the coefficients $s_n$ of the series $S(z)$ in
\eqref{eq:H-S}. More specifically, what can we say if  
Condition \eqref{eq:pS1} is ``truncated" to hold only up to the
coefficient of $z^{p^{l}}$ for some $l$, while beyond we do not have
any additional information, except that $s_n\in \Z_p$? While we cannot
expect a result as strong as, say, the bound \eqref{eq:vpH10} 
in Corollary~\ref{thm:pdiv10} with $l$ replaced by $l+1$, 
$p$-divisibility of the coefficients of the exponential $H(z)$ in
\eqref{eq:H-S} still turns out to be higher than one might expect in
that situation. Theorems~\ref{thm:pdiv2}, \ref{thm:pdiv31},
\ref{thm:pdiv22}, and \ref{thm:pdiv3}  of Section~\ref{sec:pdiv2}
collectively address this problem. For instance,
Theorem~\ref{thm:pdiv2} provides the bound 
\[
v_p(h_n) \geq \sum_{s\geq1} \fl{\frac{n}{p^s}}
-(l-1)\fl{\frac{n}{p^l}}- \sum_{s\geq l}
\fl{\frac{n}{2p^s}},\quad n\geq1, 
\]
valid for $p\geq3$ and $(p, l)\neq (3,1)$. In particular,
Theorems~\ref{thm:pdiv2}, \ref{thm:pdiv31}, and \ref{thm:pdiv22},
together with the special case of Theorem~\ref{thm:pdiv1} where $l=1$
and $m=0$, combine to demonstrate existence of a sharp
\textit{dividing line} concerning the $p$-divisibility of the coefficient
$h_n$: if $s_1\not\equiv s_p\mod{p\Z_p}$ then, for each $N$,
there exists some $n>N$, such that $h_n$ is not divisible by $p$; if,
on the other hand, $s_1 \equiv s_p\mod{p\Z_p}$, then $v_p(h_n)
\rightarrow\infty$ as $n$ tends to infinity;
cf.\ Corollary~\ref{cor:divline} for more precise statements.  

The rest of our paper is devoted to applications of our abstract 
$p$-divisibility results in Sections~\ref{sec:pdiv1} and \ref{sec:pdiv2}. 
Sections~\ref{sec:fingen} and \ref{sec:appl} 
address the question what may be said concerning the
$p$-divisibility of the homomorphism numbers $h_n(\Gamma)$ where
$\Gamma$ is a finitely generated group. 
In Proposition~\ref{Prop:FinGenIndexp}, it is shown that
$s_p(\Gamma)\equiv 0,1$~(mod~$p$). This allows us to rephrase
the dividing line concerning $p$-divisibility outlined above 
in this specific case as follows: if 
$s_p(\Gamma)\equiv 1$~(mod~$p$), then $v_p\big(h_n(\Gamma)\big)$
tends to infinity as $n\to\infty$, whereas for
$s_p(\Gamma)\equiv 0$~(mod~$p$) the prime~$p$ does not divide 
$h_n(\Gamma)$ for infinitely many~$n$; see Theorem~\ref{thm:fingen}.
Section~\ref{sec:appl} also contains further $p$-divisibility results
for homomorphism numbers $h_n(G)$, where $G$ is a finite $p$-group
(see Theorem~\ref{thm:pgroup}) or a finite non-Abelian simple group (see
Theorem~\ref{thm:Kul}). 

Our main applications are contained in Section~\ref{sec:Abel}. They
provide tight lower bounds for the $p$-divisibility of
homomorphism numbers of arbitrary finite Abelian $p$-groups.
Theorems~\ref{thm:rang} and \ref{thm:rang2} generalise
the earlier results of Katsurada, Takegahara and Yoshida, which
concerned the case of Abelian $p$-groups of rank at most~$2$, to
arbitrary rank.

We conclude our paper with some further applications.
The results in Section~\ref{sec:per} provide sufficient conditions
for ultimate periodicity modulo~$p$ 
of the subgroup numbers $s_n(\Gamma)$, where $p$ is a prime number and
$\Gamma$ is a free product of finite Abelian $p$-groups.
The corresponding theorem, Theorem~\ref{thm:per},
generalises earlier findings by Grady and Newman \cite{GrNeAA}.
In Section~\ref{sec:perm}, we apply our abstract $p$-divisibility
results to numbers of permutations with restrictions on their
cycle structure. Finally, Section~\ref{sec:super} presents a
curious three-parameter congruence modulo higher powers of
a prime~$p$.

\section{$p$-Divisibility of coefficients in exponentials of
  power series, I} \label{sec:pdiv1}

\noindent In this section we present our first set of 
main results. They concern the case where Condition~\eqref{eq:pS1d} in
Lemma~\ref{lem:dwork} is truncated after the term
involving $z^{p^l-1}$, 
for some given~$l$ (see \eqref{eq:pS1}), while assuming weaker
conditions 
for coefficients of higher powers  
(see \eqref{eq:p^m} and \eqref{eq:sdiff}). This results in 
replacing $p$-integrality of the coefficients of the series $H(z)$ in
\eqref{eq:H-S} by the weaker $p$-divisibility result given in
\eqref{eq:vpH1}. The proof of Theorem~\ref{thm:pdiv1} 
requires an auxiliary result, which is treated separately in
Lemma~\ref{lem:ij}.

\begin{theorem} \label{thm:pdiv1}
For a prime number $p,$
let\/ $S(z)=\sum_{n\ge1}\frac {s_n} {n}z^n$ be a formal power series
with $s_n\in\Q_p$ for all $n,$ and let 
$H(z)=\sum_{n\ge0}\frac {h_n} {n!}z^n$ be the exponential of $S(z)$.
Given non-negative integers $l$ and $m$ with $m<l,$ we assume that
\begin{equation} \label{eq:pS1}
S(z^{p})-pS(z)=pJ(z)+(s_{p^{l-1}}-s_{p^l})\frac {z^{p^l}} {p^{l-1}}
+O\left(z^{p^{l}+1}\right) 
\end{equation}
with $J(z)\in \Z_p[z],$ that
\begin{equation} \label{eq:p^m} 
s_{p^{l-1}}\equiv s_{p^{l}}\quad 
\text{\em mod~$p^{m}\Z_p$},
\end{equation}
and that
\begin{equation} \label{eq:sdiff} 
v_p(\la_{i})\ge -(l-m)\fl{\frac {i} {p^{l}}}
+v_p(i)-\frac {p^{\fl{\log_pi}-l}-1} {p-1}+1
\end{equation}
for all $i>p^{l},$ where $\la_i=s_i$ if $i/p^{v_p(i)}\ge p^l$
and $\la_i=s_i-s_{i/p^{e}}$ otherwise, where $e$ is minimal such
that $i/p^e< p^l$. 
Then
\begin{equation} \label{eq:vpH1}
v_p(h_n)\ge \sum_{s=1}^{l-1} \fl{\frac{n}{p^s}}
-(l-m-1)\fl{\frac {n} {p^l}}
\end{equation}
for all $n$.

Moreover, writing 
$e_p(n;l,m)=\sum_{s=1}^{l-1}\big\lfloor\frac {n} {p^{s}}\big\rfloor
-(l-m-1)\big\lfloor\frac {n} {p^l}\big\rfloor,$ 
under these conditions, the quotient
\begin{equation} \label{eq:quot}
Q_n=\frac {h_n} 
{p^{e_p(n;l,m)}}
\end{equation}
satisfies
\begin{equation} \label{eq:Qrek1} 
Q_n\equiv (-1)^lp^{-m}(s_{p^l}-s_{p^{l-1}})Q_{n-p^l} 
\quad \text{\em mod $p\Z_p$}.
\end{equation}
In particular, if 
\hbox{$s_{p^{l-1}}\not\equiv s_{p^{l}}$~{\em mod~$p^{m+1}\Z_p,$}}
then the bound in \eqref{eq:vpH1} is tight for all $n$ which are
divisible by $p^l$.
\end{theorem}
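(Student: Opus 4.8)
The plan is to establish the valuation bound \eqref{eq:vpH1} and the congruence \eqref{eq:Qrek1} simultaneously by induction on $n$, taking the Hall-type recurrence \eqref{Eq:HallTransform}, rewritten as
\[
h_n=\sum_{k=1}^{n}s_k\,(n-k+1)_{k-1}\,h_{n-k},
\]
as the basic engine. The base case is $h_0=1$, and for the inductive step I would estimate the $p$-adic valuation of each summand separately, writing
\[
v_p\big(s_k\,(n-k+1)_{k-1}\,h_{n-k}\big)=v_p(s_k)+\big(v_p((n-1)!)-v_p((n-k)!)\big)+v_p(h_{n-k}),
\]
where $v_p(h_{n-k})$ is bounded below by $e_p(n-k;l,m)$ through the induction hypothesis, and the middle term is handled by Legendre's formula (cf.\ \eqref{eq:hnn!}) together with the auxiliary Lemma~\ref{lem:ij}. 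The aim of the main computation is to show that every summand has valuation at least $e_p(n;l,m)$, which yields \eqref{eq:vpH1}.

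The decisive book-keeping device is the passage from the coefficients $s_k$ to the deviations $\la_k$. One should split the index range according to the size of the $p$-free part $k/p^{v_p(k)}$: writing $s_k=\la_k+s_{k/p^{e}}$ telescopically whenever this $p$-free part is smaller than $p^l$, the ``stable'' pieces $s_{k/p^e}$ reassemble into the contribution of a reference series that obeys the full hypothesis of Lemma~\ref{lem:dwork} up to order $z^{p^l-1}$ (this is the comparison idea gleaned from \cite{KaTYAA}), while the deviation pieces $\la_k$ are exactly those constrained by \eqref{eq:sdiff}. Condition \eqref{eq:sdiff} is tailored so that, after combination with the Pochhammer valuation from Lemma~\ref{lem:ij}, the summands with $k>p^l$ never drop below the target $e_p(n;l,m)$; the summands with $k$ a power of $p$ up to $p^{l-1}$ are controlled by the $pJ(z)\in p\Z_p[z]$ part of \eqref{eq:pS1}; and the summand $k=p^l$, carrying the coefficient $\la_{p^l}=s_{p^l}-s_{p^{l-1}}$ together with the factor $p^{-(l-1)}$ visible in \eqref{eq:pS1} and the congruence \eqref{eq:p^m}, is the unique index meeting the bound with equality.

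To obtain \eqref{eq:Qrek1} I would then isolate this extremal summand: after dividing the whole recurrence by $p^{e_p(n;l,m)}$, every term except the one indexed by $k=p^l$ has valuation strictly exceeding $e_p(n;l,m)$ and hence is $\equiv 0\pmod{p}$, while the surviving term reproduces $Q_{n-p^l}$ multiplied by the explicit factor $(-1)^{l}p^{-m}(s_{p^l}-s_{p^{l-1}})$, the sign and the power $p^{-m}$ emerging from the valuation of $(n-p^l+1)_{p^l-1}$ and from \eqref{eq:p^m}. Finally, the tightness assertion is a short corollary of \eqref{eq:Qrek1}: if $s_{p^{l-1}}\not\equiv s_{p^l}\pmod{p^{m+1}\Z_p}$ then $p^{-m}(s_{p^l}-s_{p^{l-1}})$ is a $p$-adic unit, so starting from $Q_0=h_0=1$ and iterating \eqref{eq:Qrek1} in steps of $p^l$ shows that $Q_n$ is a unit, i.e.\ $v_p(h_n)=e_p(n;l,m)$, for every $n$ divisible by $p^l$.

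The main obstacle I anticipate lies in the valuation arithmetic of the second paragraph: proving that the lower bound \eqref{eq:sdiff}, once fed through the floor-function and carry identities of Lemma~\ref{lem:ij} for $v_p((n-1)!)-v_p((n-k)!)$, matches the target $e_p(n;l,m)$ for all $k$ and all residues of $n$, and that $k=p^l$ is genuinely the unique extremal index modulo~$p$. This is precisely the point where the shape of the exponent $-(l-m)\fl{i/p^l}+v_p(i)-\frac{p^{\fl{\log_p i}-l}-1}{p-1}+1$ in \eqref{eq:sdiff} must be matched term-for-term against the Pochhammer valuation, with no slack available.
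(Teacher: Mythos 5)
Your strategy has a genuine gap, and it is fatal to the argument as outlined: the claim in your second and fourth paragraphs that \emph{every individual summand} of the recurrence \eqref{Eq:HallTransform} has valuation at least $e_p(n;l,m)$ is false, because the recurrence exhibits essential cancellation between distinct summands which no termwise estimate can detect. Take the simplest nontrivial instance of the theorem, $s_1=s_p=1$ and all other $s_n=0$ (so $S(z)=z+z^p/p$; the hypotheses hold with $l=2$, $m=0$, $J(z)=-z$), and look at $n=p$. The target is $e_p(p;2,0)=\fl{p/p}-\fl{p/p^2}=1$, but the recurrence reads $h_p=s_1\,h_{p-1}+s_p\,(1)_{p-1}\,h_0=h_{p-1}+(p-1)!$, and here $h_{p-1}=1$: both summands are $p$-adic units, and only their \emph{sum} $1+(p-1)!$ is divisible by $p$ --- by Wilson's theorem. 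More generally, whenever $p\mid n$ but $p^l\nmid n$, one has $e_p(n;l,m)\ge e_p(n-1;l,m)+1$, while the $k=1$ summand $s_1h_{n-1}$ (with $s_1$ a unit, and with the inductive bound on $h_{n-1}$ attained) has valuation only $e_p(n-1;l,m)$; the missing divisibility must come from cancellation against the $k=p,p^2,\dots$ terms. For the same reason your assertion that $k=p^l$ is the unique extremal index cannot be maintained: in the example above both $k=1$ and $k=p$ lie below the target.

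This is precisely why the paper does not induct on the recurrence. It works multiplicatively instead: one writes $S(z)=\widetilde S(z)+(s_{p^l}-s_{p^{l-1}})z^{p^l}/p^l+\sum_{i>p^l}\la_iz^i/i$, where $\widetilde S$ satisfies the exact Dwork condition \eqref{eq:pS1d}, so that $H(z)=\widetilde H(z)\exp\bigl((s_{p^l}-s_{p^{l-1}})z^{p^l}/p^l+\sum_{i>p^l}\la_iz^i/i\bigr)$ with $\widetilde H(z)=\exp\bigl(\widetilde S(z)\bigr)\in\Z_p[[z]]$ by Lemma~\ref{lem:dwork}. All Wilson-type cancellation is thereby absorbed into Dwork's lemma once and for all, and only then is the resulting multiple sum \eqref{eq:H-H3} --- whose summands involve the deviations $\la_i$ and the single exceptional coefficient $(s_{p^l}-s_{p^{l-1}})/p^l$, but \emph{not} the unit-sized coefficients $s_k$ with $k<p^l$ --- estimated term by term via \eqref{eq:p^m}, \eqref{eq:sdiff} and Lemma~\ref{lem:ij}; the congruence \eqref{eq:Qrek1} then falls out of the equality analysis in that sum (the extremal term being $i_0=r$, $j_0=a$), using Wilson's theorem and Legendre's formula. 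Your ``reassembly'' of the stable pieces $s_{k/p^e}$ into a reference series is the right instinct, but it can only be implemented at the level of power series, as a product of exponentials, not additively inside the $k$-sum of \eqref{Eq:HallTransform}; to salvage an inductive proof from the recurrence you would in effect have to re-prove Lemma~\ref{lem:dwork} along the way, carrying congruence information modulo increasing powers of $p$ for the $h_{n-k}$ rather than mere valuation bounds.
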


\begin{remarks} \label{rem:1}
(1) In order to compare the above truncated version with the original
Lem\-ma~\ref{lem:dwork}, one must compare
the bound in \eqref{eq:vpH1} with $v_p(n!)$ (recall the
parenthetical remark containing \eqref{eq:hnn!}).
More specifically, what we loose by truncating \eqref{eq:pS1d} to
\eqref{eq:pS1}--\eqref{eq:sdiff} is the difference
$$
\sum_{s\geq1} \fl{\frac{n}{p^s}}
-
\left(\sum_{s=1}^{l-1} \fl{\frac{n}{p^s}}
-(l-m-1)\fl{\frac {n} {p^l}}\right)
=(l-m)\fl{\frac {n} {p^l}}+\sum_{s\ge l+1} \fl{\frac{n}{p^s}}.
$$
%

\medskip
(2) The degree of the polynomial $J(z)$ may be restricted to 
$p^l-1$.

\medskip
(3) The term $(s_{p^{l-1}}-s_{p^l})\frac {z^{p^l}} {p^{l-1}}$ in
\eqref{eq:pS1} occurs by the definition of $S(z)$; it does not
impose any condition. The condition on that term is formulated
in \eqref{eq:p^m}.

\medskip
(4) It is worth singling out the special case of Theorem~\ref{thm:pdiv1}
where $l=1$ and $m=0$: if 
\hbox{$s_{1}\not\equiv s_{p}$~mod~$p\Z_p$}, then 
$h_n\equiv -(s_{p}-s_{1})\,h_{n-p} $~mod~$p\Z_p$. Consequently,
$$h_{ap}\equiv(s_{1}-s_{p})^a~\text{mod}~p\Z_p,\quad a\geq0;$$ 
thus, for each given $N$, there exists some 
$n>N$ such that $h_n$ is not divisible by $p$.
On the other hand, if
$s_{1}\equiv s_{p}$~mod~$p\Z_p$, then it follows from 
Theorems~\ref{thm:pdiv2}, \ref{thm:pdiv31}, and \ref{thm:pdiv22} 
with $l=1$
that larger and larger
powers of $p$ divide $h_n$ as $n$ grows; see Corollary~\ref{cor:divline}.
This says that there is a sharp dividing line concerning the
$p$-divisibility of $h_n$ depending on whether
$s_{1}\equiv s_{p}$~mod~$p\Z_p$ or not.
\end{remarks}

\begin{proof}[Proof of Theorem~{\em \ref{thm:pdiv1}}]
We write $n=ap^l+r$ with $0\le r<p^l$.
Assuming \eqref{eq:pS1}--\eqref{eq:sdiff},
we have to show the bound \eqref{eq:vpH1}.

We rewrite the series $S(z)$ as
\begin{equation} \label{eq:H-H2} 
S(z)=\widetilde S(z)+(s_{p^l}-s_{p^{l-1}})\frac {z^{p^l}}
{p^l} + \sum_{i=p^l+1}^{\infty}\la_i\frac {z^{i}} {i},
\end{equation}
with the coefficients $\la_i\in \Z_p$ as in the statement
of the theorem. It should be observed that this makes 
$\widetilde S(z)$ satisfy \eqref{eq:pS1d}. 

Let us write $\widetilde
H(z)=\sum_{n\ge0}\widetilde H_nz^n=\exp\big(\widetilde S(z)\big)$.
By taking the exponential of both sides of \eqref{eq:H-H2}, we obtain 
\begin{equation} \label{eq:H-H}
H(z) =
\widetilde H(z)\exp\left((s_{p^l}-s_{p^{l-1}})\frac {z^{p^l}}
{p^l} + \sum_{i=p^l+1}^{\infty}\la_i\frac {z^{i}} {i}
\right) .
\end{equation}
We compare coefficients of $z^{ap^l+r}$ on both sides of
\eqref{eq:H-H}. This leads to
\begin{equation} \label{eq:H-H3} 
\frac {h_{ap^l+r}} {(ap^l+r)!}=
\sum_{i_0+p^lj_0+\sum_{i=p^l+1}^\infty ij_i=ap^l+r}
\widetilde H_{i_0}
\frac {(s_{p^l}-s_{p^{l-1}})^{j_0}} {p^{j_0l}\,j_0!}
\prod _{i=p^l+1} ^{\infty}
\frac {\la_i^{j_i}}
{i^{j_i}\,j_i!}.
\end{equation}
We now bound the $p$-adic valuation of the summand from below.
By Lemma~\ref{lem:dwork} applied to $\widetilde S(z)$, 
we know that $\widetilde H_{i_0}\in \Z_p$.
By \eqref{eq:p^m}, we have
\begin{equation*} 
v_p\left(\frac {(s_{p^l}-s_{p^{l-1}})^{j_0}} {p^{j_0l}\,j_0!}
\right) 
\ge
j_0(m-l)-\sum_{s\ge1}\fl{\frac {j_0} {p^s}}.
\end{equation*}
By \eqref{eq:sdiff}, for $i>p^l$ we have
\begin{equation*} 
v_p\left(\frac {\la_{i}^{j_{i}}}
{i^{j_{i}}\,j_{i}!}\right) 
\ge 
-j_{i}\left((l-m)\fl{\frac {i} {p^{l}}}
+\frac {p^{\fl{\log_pi}-l}-1} {p-1}-1
\right)
-\sum_{s\ge1}\fl{\frac {j_{i}} {p^s}}.
\end{equation*}
Together, these estimations imply that the $p$-adic valuation of the
summand of the sum on the right-hand side of \eqref{eq:H-H3} is at least
{\allowdisplaybreaks
\begin{align}
\notag
-(l-m)&\left(j_0+\sum_{i=p^l+1}^\infty j_i\fl{\frac {i} {p^{l}}} \right)
+\sum_{i=p^{l}+1}^{\infty} j_{i}
-\sum_{i=p^{l}+1}^{\infty} \frac {p^{\fl{\log_pi}-l}-1} {p-1}j_{i}\\
\notag
&\kern5cm
-\sum_{s\ge1}\left(\fl{\frac {j_0} {p^s}}
+\sum_{i=p^l+1}^\infty\fl{\frac {j_i} {p^s}}\right)\\
\notag
&\kern1cm
\ge
-(l-m)\Bigg\lfloor j_0+\sum_{i=p^l+1}^\infty j_i\frac {i} {p^{l}} \Bigg\rfloor
-\sum_{i=p^{l}+1}^{\infty} \frac {p^{\fl{\log_pi}-l}-1} {p-1}j_{i}\\
\notag
&\kern2cm
-\sum_{s\ge1}\left(\fl{\frac {j_0} {p^s}}
+\sum_{i=p^l+1}^\infty\fl{\frac {ij_i} {p^{s+l}}}\right)
+\sum_{s\ge1}
\sum_{i=p^l+1}^\infty\left(\fl{\frac {ij_i} {p^{s+l}}}
-\fl{\frac {j_i} {p^{s}}}
\right)\\
\notag
&\kern1cm
\ge
-(l-m)\Bigg\lfloor{\frac {1} {p^l}
\Bigg(p^lj_0+\sum_{i=p^l+1}^\infty ij_i\Bigg)}\Bigg\rfloor
-\sum_{s\ge1}\fl{\frac {p^lj_0+\sum_{i=p^l+1}^\infty ij_i} {p^{s+l}}}\\
&\kern1cm
\ge
-(l-m)a
-\sum_{s\ge1}\fl{\frac {ap^l+r} {p^{s+l}}},
\label{eq:bound}
\end{align}}%
where we have used Lemma~\ref{lem:ij} to obtain the next-to-last line, and
the dependence of the summation indices of the sum
in \eqref{eq:H-H3} to obtain the last line.
Together with Legendre's formula \cite[p.~10]{LegeAA} applied to the
$p$-adic valuation of $(ap^l+r)!$ on the left-hand side of \eqref{eq:H-H3}, 
this proves the bound in
\eqref{eq:vpH1}, with $n=ap^l+r$.

In order to establish the more precise congruence
\eqref{eq:Qrek1}, we first observe that equality in \eqref{eq:bound}
only holds if $j_i=0$ for all $i>p^{l}$ since we discarded
the sum of these $j_i$'s when going from the first to the second
expression.
Hence, it suffices to look more carefully at
the summand with $i_0=r$, $j_0=a$, and $j_i=0$ for $i>p^l$,
that is, at
$$
\widetilde H_{r}
\frac {(s_{p^l}-s_{p^{l-1}})^{a}} {p^{a l}\,a!}.
$$
More precisely, by the above considerations, we conclude that
\begin{align*}
Q_{ap^l+r}&=\frac {h_{ap^l+r}} 
{p^{e_p(ap^l+r;m,l)}}\\
&\equiv
\frac {(ap^l+r)!\,\widetilde H_{r}} {p^{e_p(ap^l+r;m,l)}}
\frac {(s_{p^l}-s_{p^{l-1}})^{a}} {p^{a l}\,a!}
\quad \text{mod }p\Z_p\\
&\equiv
\frac {(ap^l+r)!\,\widetilde H_{r}} {p^{e_p(r;m,l)+ap^{l-1}+\dots+ap+a}}
\frac {\left(p^{-m}(s_{p^l}-s_{p^{l-1}})\right)^{a}} {a!}
\quad \text{mod }p\Z_p\\
&\equiv
\frac {(p-1)!^{a(p^{l-1}+\dots+p+1)}\,r!\,\widetilde H_{r}} 
{p^{e_p(r;m,l)}}
{\left(p^{-m}(s_{p^l}-s_{p^{l-1}})\right)^{a}} 
\quad \text{mod }p\Z_p\\
&\equiv
Q_r
(-1)^{al}
{\left(p^{-m}(s_{p^l}-s_{p^{l-1}})\right)^{a}} 
\quad \text{mod }p\Z_p,
\end{align*}
where we have used Wilson's theorem plus the fact 
that $\widetilde H_r=\frac {h_r} {r!}$ in the last line.
This is exactly the congruence \eqref{eq:Qrek1}.

The above computation with $r=0$ implies that
\begin{equation*} 
Q_{ap^l}\equiv \big((-1)^lp^{-m}(s_{p^l}-s_{p^{l-1}})\big)^a
\quad \text{mod $p\Z_p$}
\end{equation*}
for all positive integers $a$. Thus, if 
\hbox{$s_{p^{l-1}}\not\equiv s_{p^{l}}$~{mod~$p^{m+1}\Z_p$}},
the bound \eqref{eq:vpH1} is
sharp for all $n$ divisible by $p^l$.
This completes the proof of the theorem.
\end{proof}

\begin{lemma} \label{lem:ij}
For all non-negative integers $i,$ $j,$ $l,$ and\/ $p,$ with $i\ge p^{l}$
and $p\ge2,$ we have
$$
\sum_{s\ge1}
\left(\fl{\frac {ij} {p^{s+l}}}
-
\fl{\frac {j} {p^{s}}}\right)
\ge \frac {p^{\fl{\log_pi}-l}-1} {p-1}j.
$$
\end{lemma}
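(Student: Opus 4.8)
The plan is to set $k=\fl{\log_p i}$, so that $p^k\le i<p^{k+1}$; the hypothesis $i\ge p^l$ then forces $k\ge l$, and the right-hand side can be rewritten as the geometric sum $\frac{p^{k-l}-1}{p-1}\,j=j\sum_{u=0}^{k-l-1}p^u$. The first step is to reindex the positive part of the left-hand side by $t=s+l$, writing $\sum_{s\ge1}\fl{\frac{ij}{p^{s+l}}}=\sum_{t\ge l+1}\fl{\frac{ij}{p^t}}$, and then to split this sum at $t=k$ into a ``head'' $\sum_{t=l+1}^{k}$ and a ``tail'' $\sum_{t\ge k+1}$.

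For the head I would use only $i\ge p^k$: for $l+1\le t\le k$ one has $\frac{ij}{p^t}\ge jp^{k-t}$, and since $k-t\ge0$ the number $jp^{k-t}$ is a non-negative integer lying below $\frac{ij}{p^t}$, whence $\fl{\frac{ij}{p^t}}\ge jp^{k-t}$. Summing the resulting geometric series $\sum_{t=l+1}^{k}jp^{k-t}=j\sum_{u=0}^{k-l-1}p^u$ reproduces \emph{exactly} the target right-hand side. For the tail I would reindex by $t=k+s$ and again invoke $i\ge p^k$: since $\frac{ij}{p^{k+s}}=\frac{i}{p^k}\cdot\frac{j}{p^s}\ge\frac{j}{p^s}$, monotonicity of the floor yields $\fl{\frac{ij}{p^{k+s}}}\ge\fl{\frac{j}{p^s}}$, so that $\sum_{t\ge k+1}\fl{\frac{ij}{p^t}}\ge\sum_{s\ge1}\fl{\frac{j}{p^s}}$. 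Thus the tail alone already absorbs the entire subtracted sum, and adding the head estimate completes the inequality.

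The one genuine subtlety — and the reason the naive one-line argument fails — is that the obvious termwise bound $\fl{\frac{ij}{p^{s+l}}}\ge p^{k-l}\fl{\frac{j}{p^s}}$ only produces $(p^{k-l}-1)\sum_{s\ge1}\fl{\frac{j}{p^s}}=(p^{k-l}-1)\,v_p(j!)$, and by Legendre's formula $v_p(j!)\le\frac{j}{p-1}$ points in the wrong direction, leaving a bound that is in general strictly weaker than $\frac{p^{k-l}-1}{p-1}j$. Splitting the sum at $t=k$ is precisely the device that repairs this: the head contributes the full linear-in-$j$ main term through an exact geometric sum, while the tail is compared with the subtracted sum term by term with no lossy factor. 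I would finally record the degenerate case $k=l$ (empty head, right-hand side $0$), where the claim collapses to the trivial remark that $\frac{ij}{p^{s+l}}\ge\frac{j}{p^s}$ makes every summand non-negative.
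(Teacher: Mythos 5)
Your proof is correct and is essentially the same argument as the paper's: both replace $i$ by $p^{\fl{\log_pi}}$, extract the exact geometric sum $j\sum_{u=0}^{\fl{\log_pi}-l-1}p^u$ from the terms with $s\le\fl{\log_pi}-l$, and cancel the remaining terms against $\sum_{s\ge1}\fl{j/p^s}$ by reindexing. The paper merely compresses this into a two-line chain of inequalities, whereas you make the split and the reindexing explicit.
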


\begin{proof}
We have
\begin{align*}
\sum_{s\ge1}
\left(\fl{\frac {ij} {p^{s+l}}}
-
\fl{\frac {j} {p^{s}}}\right)
&\ge
\sum_{s\ge1}
\left(\fl{\frac {p^{\fl{\log_pi}}j} {p^{s+l}}}
-
\fl{\frac {j} {p^{s}}}\right)\\
&\ge
\sum_{s=1} ^{\fl{\log_pi}-l}
 {p^{\fl{\log_pi}-s-l}}j
=\frac {p^{\fl{\log_pi}-l}-1} {p-1}j,
\end{align*}
as desired.
\end{proof}

In the case where $m=0$, one can formulate a simplified version of
Theorem~\ref{thm:pdiv1} if one assumes in addition that the
coefficients $s_n$ are elements of $\Z_p$, since then \eqref{eq:sdiff}
is automatically satisfied.

\begin{corollary} \label{thm:pdiv10}
For a prime number $p,$
let\/ $S(z)=\sum_{n\ge1}\frac {s_n} {n}z^n$ be a formal power series
with $s_n\in\Z_p$ for all $n,$ and let 
$H(z)=\sum_{n\ge0}\frac {h_n} {n!}z^n$ be the exponential of $S(z)$.
Given a positive integer $l,$ we assume that
\begin{equation} \label{eq:pS10}
S(z^{p})-pS(z)=pJ(z)
+O\left(z^{p^{l}}\right) 
\end{equation}
with $J(z)\in \Z_p[z]$.
Then
\begin{equation} \label{eq:vpH10}
v_p(h_n)\ge \sum_{s=1}^{l-1} \fl{\frac{n}{p^s}}
-(l-1)\fl{\frac {n} {p^l}}
\end{equation}
for all $n$.

Moreover, writing 
$e_p(n;l)=\sum_{s=1}^{l-1}\fl{\frac {n} {p^{s}}}
-(l-1)\fl{\frac {n} {p^l}},$ 
under these conditions, the quotient
\begin{equation*} 
Q_n=\frac {h_n} 
{p^{e_p(n;l)}}
\end{equation*}
satisfies
\begin{equation} \label{eq:Qrek10} 
Q_n\equiv (-1)^l(s_{p^l}-s_{p^{l-1}})Q_{n-p^l} 
\quad \text{\em mod $p\Z_p$}.
\end{equation}
In particular, if 
\hbox{$s_{p^{l-1}}\not\equiv s_{p^{l}}$~{\em mod~$p\Z_p,$}}
then the bound in \eqref{eq:vpH10} is tight for all $n$ which are
divisible by $p^l$.
\end{corollary}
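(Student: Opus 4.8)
The plan is to deduce the corollary directly from Theorem~\ref{thm:pdiv1} by specialising to $m=0$ and exploiting the extra hypothesis $s_n\in\Z_p$. First I would check that the assumptions of Theorem~\ref{thm:pdiv1} are met. Condition~\eqref{eq:p^m} with $m=0$ reads $s_{p^{l-1}}\equiv s_{p^l}$ mod $\Z_p$, which holds trivially since $s_{p^{l-1}},s_{p^l}\in\Z_p$. For Condition~\eqref{eq:pS1}, I would recall (as in Remark~(3)) that the coefficient of $z^{p^l}$ in $S(z^p)-pS(z)$ is forced by the definition of $S(z)$ to equal $(s_{p^{l-1}}-s_{p^l})/p^{l-1}$; hence the hypothesis~\eqref{eq:pS10}, that $S(z^p)-pS(z)-pJ(z)=O(z^{p^l})$, is precisely Condition~\eqref{eq:pS1} after replacing $J$ by its truncation in degrees $<p^l$ (cf.\ Remark~(2)), which does not affect the expansion.

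The substantive point is to verify that Condition~\eqref{eq:sdiff} holds automatically once $s_n\in\Z_p$. Since each $\la_i$ is then a $\Z_p$-combination of the $s_n$ (either $\la_i=s_i$ or $\la_i=s_i-s_{i/p^e}$), we have $\la_i\in\Z_p$, so $v_p(\la_i)\ge0$; thus it suffices to show that the right-hand side of \eqref{eq:sdiff} with $m=0$, namely
$$
-l\fl{\frac{i}{p^l}}+v_p(i)-\frac{p^{\fl{\log_pi}-l}-1}{p-1}+1,
$$
is nonpositive for every $i>p^l$. Writing $k=\fl{\log_pi}\ge l$ and $a=v_p(i)\le k$, I would split according to whether $k=l$ or $k>l$. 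When $k=l$ one has $\fl{i/p^l}\ge1$ and vanishing geometric term; in the borderline case $a=l$ (forcing $i=p^l\,t$ with $2\le t\le p-1$, hence $\fl{i/p^l}=t\ge2$) the expression equals $-lt+l+1=-l(t-1)+1\le0$, while the subcase $a\le l-1$ is immediate from $-l\fl{i/p^l}+v_p(i)+1\le -l+(l-1)+1=0$. When $k>l$ one combines $\fl{i/p^l}\ge p^{k-l}$ with the geometric term $\frac{p^{k-l}-1}{p-1}$ to dominate $v_p(i)+1\le k+1$, the tightest instance being $p=2$, $l=1$ (e.g.\ $i=4$), which is exactly where the extra geometric contribution is needed.

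Having checked all hypotheses, the conclusions follow by specialising $m=0$ in Theorem~\ref{thm:pdiv1}: the bound~\eqref{eq:vpH1} becomes~\eqref{eq:vpH10}, the exponent $e_p(n;l,0)$ coincides with $e_p(n;l)$, the recurrence~\eqref{eq:Qrek1} reduces to~\eqref{eq:Qrek10} since $p^{-m}=1$, and the tightness assertion carries over verbatim with $m+1=1$, i.e.\ for $s_{p^{l-1}}\not\equiv s_{p^l}$ mod $p\Z_p$. I expect the only real obstacle to be the elementary inequality of the second paragraph: it is routine but genuinely requires the case split, since for $k>l$ the naive estimate $l\fl{i/p^l}\ge v_p(i)+1$ can fail and the geometric term $\frac{p^{k-l}-1}{p-1}$ is indispensable to close the gap in the small-$p$, small-$l$ regime.
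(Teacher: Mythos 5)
Your proposal is correct and follows exactly the route the paper intends: the paper introduces this corollary with the remark that for $m=0$ and $s_n\in\Z_p$ Condition~\eqref{eq:sdiff} is automatically satisfied, so the corollary is just Theorem~\ref{thm:pdiv1} specialised to $m=0$. Your verification that the right-hand side of \eqref{eq:sdiff} is nonpositive for all $i>p^l$ (including the borderline case $p=2$, $l=1$, $i=4$, where the geometric term is genuinely needed) supplies the details the paper leaves implicit, and the remaining bookkeeping (truncating $J$, the trivial congruence mod $\Z_p$, and the specialisation of \eqref{eq:vpH1}, \eqref{eq:Qrek1} and the tightness assertion) is handled correctly.
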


We state another special case of Theorem~\ref{thm:pdiv1} separately,
which is convenient in many applications. More precisely, it addresses
the case where the coefficients $s_n$ may be only non-zero for $n$ a
power of $p$. This is the case, for instance, if the
coefficient $s_n$ is the number of subgroups of index $n$ in a
given $p$-group, $n\ge0$; see Sections~\ref{sec:appl} and \ref{sec:Abel}.

\begin{corollary} \label{thm:pdiv1u}
For a prime number $p,$
let\/ $S(z)=\sum_{n\ge1}\frac {s_n} {n}z^n$ be a formal power series
with $s_{p^e}\in\Z_p$ for all non-negative integers $e$ 
and $s_n=0$ otherwise, and let 
$H(z)=\sum_{n\ge0}\frac {h_n} {n!}z^n$ be the exponential of $S(z)$.
Given non-negative integers $l$ and $m$ with $m<l,$ we assume that
\begin{equation} \label{eq:pS1u}
S(z^{p})-pS(z)=pJ(z)+(s_{p^{l-1}}-s_{p^l})\frac {z^{p^l}} {p^{l-1}}
+O\left(z^{p^{l}+1}\right) 
\end{equation}
with $J(z)\in \Z_p[z],$ 
\begin{equation} \label{eq:p^mu} 
s_{p^{l-1}}\equiv s_{p^{l}}\quad 
\text{\em mod~$p^{m}\Z_p$},
\end{equation}
and
\begin{equation} \label{eq:sdiffu} 
v_p(s_{p^e}-s_{p^{l-1}})\ge -(l-m)p^{e-l}-\frac {p^{e-l}-1} {p-1}+e+1
\end{equation}
for all $e$ with $l<e<l+\log_p(2l+1)$.
Then
\begin{equation} \label{eq:vpH1u}
v_p(h_n)\ge \sum_{s=1}^{l-1} \fl{\frac{n}{p^s}}
-(l-m-1)\fl{\frac {n} {p^l}}
\end{equation}
for all $n$.

Moreover, writing 
$e_p(n;l,m)=\sum_{s=1}^{l-1}\fl{\frac {n} {p^{s}}}
-(l-m-1)\fl{\frac {n} {p^l}},$ 
under these conditions the quotient
\begin{equation*} 
Q_n=\frac {h_n} 
{p^{e_p(n;l,m)}}
\end{equation*}
satisfies
\begin{equation} \label{eq:Qrek1u} 
Q_n\equiv (-1)^lp^{-m}(s_{p^l}-s_{p^{l-1}})Q_{n-p^l} 
\quad \text{\em mod $p\Z_p$}.
\end{equation}
In particular, if 
\hbox{$s_{p^{l-1}}\not\equiv s_{p^{l}}$~{\em mod~$p^{m+1}\Z_p,$}}
then the bound in \eqref{eq:vpH1u} is tight for all $n$ which are
divisible by $p^l$.
\end{corollary}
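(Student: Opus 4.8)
The plan is to derive Corollary~\ref{thm:pdiv1u} directly from Theorem~\ref{thm:pdiv1} by verifying that the hypotheses of the corollary entail those of the theorem; once this is established, the bound \eqref{eq:vpH1u}, the congruence \eqref{eq:Qrek1u}, and the tightness assertion are verbatim restatements of \eqref{eq:vpH1}, \eqref{eq:Qrek1}, and the tightness claim in the theorem (the quantity $e_p(n;l,m)$ being defined identically in both). Since \eqref{eq:pS1u} and \eqref{eq:p^mu} coincide exactly with \eqref{eq:pS1} and \eqref{eq:p^m}, and since $s_{p^e}\in\Z_p$ gives in particular $s_n\in\Q_p$, the only substantive task is to check that \eqref{eq:sdiff} holds for \emph{every} $i>p^l$, given that $(s_n)$ is supported on powers of $p$ and that \eqref{eq:sdiffu} is assumed only on the finite range $l<e<l+\log_p(2l+1)$.

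First I would dispose of the indices $i>p^l$ that are \emph{not} powers of $p$. Writing $i=p^ab$ with $\gcd(b,p)=1$ and $b>1$, one has $s_i=0$. If $i/p^{v_p(i)}=b\ge p^l$, then $\la_i=s_i=0$; otherwise $e$ is minimal with $i/p^e<p^l$, and because $b<p^l$ forces $e\le a=v_p(i)$, the index $i/p^e=p^{a-e}b$ is an integer that is again not a power of $p$, so $s_{i/p^e}=0$ and hence $\la_i=s_i-s_{i/p^e}=0$. In either case \eqref{eq:sdiff} is trivially satisfied. For $i=p^e$ with $e>l$, one has $i/p^{v_p(i)}=1<p^l$, the minimal $e'$ with $p^e/p^{e'}<p^l$ is $e'=e-l+1$, and therefore $\la_{p^e}=s_{p^e}-s_{p^{l-1}}$; substituting $\fl{i/p^l}=p^{e-l}$, $v_p(i)=e$, and $\fl{\log_pi}=e$ into \eqref{eq:sdiff} turns it into precisely \eqref{eq:sdiffu}.

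It then remains to verify \eqref{eq:sdiffu} for all $e>l$, knowing it only for $l<e<l+\log_p(2l+1)$. The idea is that the right-hand side of \eqref{eq:sdiffu} is eventually negative, so that the free bound $v_p(s_{p^e}-s_{p^{l-1}})\ge0$, available because $s_{p^e},s_{p^{l-1}}\in\Z_p$, already does the job. Concretely I would put $k=e-l\ge1$ and use $l-m\ge1$ to bound the right-hand side above by $-p^k-\frac{p^k-1}{p-1}+l+k+1$. For $e\ge l+\log_p(2l+1)$, i.e.\ $p^k\ge 2l+1$, I would combine $p^k\ge 2l+1$ with the elementary estimate $\frac{p^k-1}{p-1}=1+p+\dots+p^{k-1}\ge k$ to see that this expression is at most $-l\le0$. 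This closes the gap beyond the finite range and completes the verification of \eqref{eq:sdiff}, whence Theorem~\ref{thm:pdiv1} applies.

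The only delicate point — and hence the main obstacle — is pinning down the exact cutoff $l+\log_p(2l+1)$: one must check that the exponentially growing term $-(l-m)p^{e-l}$ overtakes the linear term $e+1$ precisely once $p^{e-l}\ge 2l+1$, which is exactly what renders the \emph{finite} list of hypotheses in \eqref{eq:sdiffu} equivalent, under the standing assumption $s_{p^e}\in\Z_p$, to the \emph{infinite} family \eqref{eq:sdiff} demanded by Theorem~\ref{thm:pdiv1}.
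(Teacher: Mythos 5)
Your proposal is correct and follows exactly the route the paper intends: the corollary is presented there as a special case of Theorem~\ref{thm:pdiv1}, and your verification — that $\la_i$ vanishes for $i$ not a power of $p$, that for $i=p^e$ Condition~\eqref{eq:sdiff} specialises precisely to \eqref{eq:sdiffu}, and that for $p^{e-l}\ge 2l+1$ the right-hand side of \eqref{eq:sdiffu} is at most $-l\le 0$ so that $\Z_p$-integrality suffices — is exactly the (omitted) specialisation argument. Nothing is missing.
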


If $p=2$, there is a case which does not fall under
the conditions of Theorem~\ref{thm:pdiv1} (more precisely, 
Condition~\eqref{eq:sdiff} is violated by a small margin), in which one may
nevertheless even {\it improve} the bound in \eqref{eq:vpH1}. The corresponding
theorem, given below, vastly generalises Theorem~1.4 in \cite{KaTYAA}.
We shall apply it in the proof of Theorem~\ref{thm:rang2}. 

\begin{theorem} \label{thm:pdiv12}
Let\/ $S(z)=\sum_{n\ge1}\frac {s_n} {n}z^n$ be a formal power series
with $s_n\in\Q_2$ for all $n,$ and let 
$H(z)=\sum_{n\ge0}\frac {h_n} {n!}z^n$ be the exponential of $S(z)$.
Given an integer $l\ge2,$ we assume that
\begin{equation} \label{eq:pS12}
S(z^{2})-2S(z)=2J(z)+(s_{2^{l-1}}-s_{2^l})\frac {z^{2^l}} {2^{l-1}}
+O\left(z^{2^{l}+1}\right) 
\end{equation}
with $J(z)\in \Z_2[z],$ that
\begin{gather} \label{eq:p^m2a} 
s_{2^{l-1}}\equiv s_{2^{l}}\quad 
\text{\em mod~$2^{l-1}\Z_2$},
\quad 
s_{2^{l}}\equiv s_{2^{l+1}}\quad 
\text{\em mod~$2^{l-2}\Z_2$},
\\
\label{eq:p^m2b} 
s_{2^{l}}-s_{2^{l-1}}\equiv 2(s_{2^{l+1}}-s_{2^{l}})\quad 
\text{\em mod~$2^{l}\Z_2$},
\end{gather}
and that
\begin{equation} \label{eq:sdiff2} 
v_2(\la_{i})\ge -\fl{\frac {i} {2^{l}}}
+v_2(i)-2^{\fl{\log_2i}-l}+\cl{\frac {i} {2^{l+2}}}+1
+\chi(i>2^{l+1})
\end{equation}
for all $i>2^{l}$ different from $2^{l+1},$ 
where $\la_i=s_i$ if $i/2^{v_2(i)}\ge 2^l$
and $\la_i=s_i-s_{i/2^{e}}$ otherwise, and with $e$ minimal such
that $i/2^e< 2^l$. 
Here, $\chi(\mathcal A)=1$
if $\mathcal A$ is true, and $\chi(\mathcal A)=0$ otherwise.
Then
\begin{equation} \label{eq:vpH12}
v_2(h_n)\ge \sum_{s=1}^{l-1} \fl{\frac{n}{2^s}}
+\fl{\frac {n} {2^{l+1}}}-\fl{\frac {n} {2^{l+2}}}
\end{equation}
for all $n$.

Moreover, writing 
$e_2(n;l)=\sum_{s=1}^{l-1}\fl{\frac {n} {2^{s}}}
+\fl{\frac {n} {2^{l+1}}} 
-\fl{\frac {n} {2^{l+2}}},$ 
under these conditions the quotient
\begin{equation*} 
Q_n=\frac {h_n} 
{2^{e_2(n;l)}}
\end{equation*}
satisfies
\begin{equation} \label{eq:Qrek12} 
Q_n\equiv 2^{-l+2}(s_{2^{l+1}}-s_{2^{l}})Q_{n-2^{l+2}} 
\quad \text{\em mod $2\Z_2$}.
\end{equation}
If 
\hbox{$s_{2^{l}}\not\equiv s_{2^{l+1}}$~{\em mod~$2^{l-1}\Z_2,$}}
then the bound in \eqref{eq:vpH12} is tight for all $n$ which 
are congruent to $0$ or $2^l$ modulo $2^{l+2};$ if
\hbox{$s_{2^{l-1}}\equiv s_{2^{l+1}}+2^{l-2}$~{\em mod~$2^{l}\Z_2,$}}
then also for all $n$ congruent to $2^{l+1}$ modulo~$2^{l+2}$ but not
for those congruent to $3\cdot 2^l,$ while it is the other way round
if
\hbox{$s_{2^{l-1}}\equiv s_{2^{l+1}}+3\cdot2^{l-2}$~{\em mod~$2^{l}\Z_2$}}.
\end{theorem}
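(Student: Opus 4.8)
The plan is to mimic the proof of Theorem~\ref{thm:pdiv1}, the essential new feature being that the terms of degrees $z^{2^l}$ and $z^{2^{l+1}}$ in the correction to the ``Dwork part'' of $S(z)$ must be kept together and analysed jointly; their interaction, constrained by \eqref{eq:p^m2b}, is what produces the improvement of \eqref{eq:vpH12} over \eqref{eq:vpH1}. Writing $n=2^{l+2}q+r$ with $0\le r<2^{l+2}$ and setting $A=s_{2^l}-s_{2^{l-1}}$, $B=s_{2^{l+1}}-s_{2^l}$, I would first split
\[
S(z)=\widetilde S(z)+A\,\frac{z^{2^l}}{2^l}+(A+B)\,\frac{z^{2^{l+1}}}{2^{l+1}}+\sum_{\substack{i>2^l\\i\ne 2^{l+1}}}\la_i\,\frac{z^i}{i},
\]
exactly as in \eqref{eq:H-H2}, where $\widetilde S(z)$ is chosen to satisfy \eqref{eq:pS1d} (so that $\widetilde H(z)=\exp(\widetilde S(z))$ has coefficients in $\Z_2$ by Lemma~\ref{lem:dwork}), and where $\la_{2^{l+1}}=s_{2^{l+1}}-s_{2^{l-1}}=A+B$ is the coefficient that the $\la$-bookkeeping of Theorem~\ref{thm:pdiv1} attaches to $z^{2^{l+1}}$. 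Taking exponentials and comparing coefficients of $z^n$ then expresses $h_n/n!$ as a sum over multi-indices, precisely as in \eqref{eq:H-H3}.

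The contribution of the higher terms ($i>2^l$, $i\ne2^{l+1}$) is controlled exactly as in the proof of Theorem~\ref{thm:pdiv1}: Condition~\eqref{eq:sdiff2} together with the estimate of Lemma~\ref{lem:ij} shows that any multi-index with $j_i\ne0$ for some such $i$ overshoots the target bound, so these terms may be set aside for the tightness argument and merely estimated for \eqref{eq:vpH12}. The heart of the matter is therefore the two-term factor $\exp\!\big(P(z)\big)$ with $P(z)=A\frac{z^{2^l}}{2^l}+(A+B)\frac{z^{2^{l+1}}}{2^{l+1}}$, and I would analyse it by completing the square in $w=z^{2^l}$:
\[
P(z)=\frac{A+B}{2^{l+1}}\left(w+\frac{A}{A+B}\right)^{\!2}-\frac{A^2}{2^{l+1}(A+B)}.
\]
In the critical case $v_2(B)=l-2$ (the only one relevant for tightness), Conditions~\eqref{eq:p^m2a}--\eqref{eq:p^m2b} give $v_2(A)=l-1$, $v_2(A+B)=l-2$ and $A\equiv2B\pmod{2^l}$, whence $v_2\!\big(A/(A+B)\big)=1$; thus the shift equals $2$ times a $2$-adic unit, and $P(z)$ is, up to a $2$-integral adjustment of its linear part, governed by the square $(z^{2^l}+2)^2=z^{2^{l+1}}+4z^{2^l}+4$. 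This is the source both of the period $2^{l+2}$ (twice the degree $2^{l+1}$ of the square's leading monomial) and of the gain: the effective expansion parameter has valuation $-(l-2)$, producing exactly the extra $\fl{n/2^{l+1}}-\fl{n/2^{l+2}}$ in \eqref{eq:vpH12}.

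With the valuation of the coefficients of $\exp(P(z))$ in hand, combining it with the integrality of $\widetilde H(z)$ and the higher-term estimate yields \eqref{eq:vpH12} via Legendre's formula, just as \eqref{eq:bound} does for Theorem~\ref{thm:pdiv1}. For the congruence \eqref{eq:Qrek12} I would isolate the unique multi-index attaining equality, reduce it modulo $2$ using Wilson's theorem as in the $Q_r$-computation there, and read off the recurrence factor; here the relation $(A+B)^2\equiv B^2$ modulo the relevant power of $2$ (again a consequence of $A\equiv2B$) converts the natural parameter $A+B$ into $s_{2^{l+1}}-s_{2^l}=B$, giving the stated coefficient $2^{-l+2}B$. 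The tightness statements then follow by tracking this leading coefficient one $2$-adic digit further: the cases $n\equiv0,2^l\pmod{2^{l+2}}$ are governed directly by $v_2(B)$, while the split between $n\equiv2^{l+1}$ and $n\equiv3\cdot2^l\pmod{2^{l+2}}$ reflects the two possible values $2^{l-2}$ and $3\cdot2^{l-2}$ of $s_{2^{l+1}}-s_{2^{l-1}}=A+B$ modulo~$2^l$, exactly as recorded in the two displayed subcases.

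I expect the main obstacle to be the sharp, residue-sensitive valuation analysis of $\exp(P(z))$: one needs not just a lower bound on the valuation of the coefficient of $z^{2^lk}$, but its exact leading behaviour, and this behaviour differs across the four residues $0,2^l,2^{l+1},3\cdot2^l$ of $n$ modulo $2^{l+2}$ that the tightness dichotomy distinguishes. Two points demand particular care: the constant produced by completing the square has valuation $-1$ and so cannot be exponentiated $2$-adically---it must be carried through the factorial bookkeeping rather than split off---and one must confirm that the excluded higher-order terms, bounded only through \eqref{eq:sdiff2}, never encroach on these boundary residues. Verifying the numerical relations $v_2\!\big(A/(A+B)\big)=1$ and $(A+B)^2\equiv B^2$ that make the square structure and the final coefficient come out correctly is the linchpin of the whole argument.
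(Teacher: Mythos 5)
Your overall architecture does match the paper's proof: the splitting of $S(z)$ with both critical terms $z^{2^l}$ and $z^{2^{l+1}}$ kept aside, the control of the indices $i>2^l$, $i\ne 2^{l+1}$ via \eqref{eq:sdiff2} and Lemma~\ref{lem:ij}, and the endgame (Wilson's theorem, explicit leading coefficients distinguishing the four residues modulo $2^{l+2}$). But the central step has a genuine gap: the completing-the-square analysis of $P(z)=A\frac{z^{2^l}}{2^l}+(A+B)\frac{z^{2^{l+1}}}{2^{l+1}}$ cannot be made rigorous over $\Q_2$. The factorisation $\exp(P)=\exp\bigl(-\frac{A^2}{2^{l+1}(A+B)}\bigr)\cdot\exp\bigl(\frac{A+B}{2^{l+1}}(w+c)^2\bigr)$, $c=\frac{A}{A+B}$, fails twice over: the constant has valuation $-1$, so its exponential is a divergent $2$-adic series, and --- worse --- the factor $\exp\bigl(\la(w+c)^2\bigr)$ with $\la=\frac{A+B}{2^{l+1}}$ cannot even be re-expanded as a power series in $w$, since the coefficient of $w^j$ would be $\sum_{k\ge\cl{j/2}}\frac{\la^k}{k!}\binom{2k}{j}c^{2k-j}$, whose terms (using $v_2(\la)=-3$, $v_2(c)=1$ in the critical case) have valuations tending to $-\infty$. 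So ``carrying the constant through the factorial bookkeeping'' is not an available move: once you refuse to split off the constant, the square structure gives you nothing you can expand. And the quantitative facts you need are never derived from it. They are genuinely nontrivial, because the required bound rests on cancellation between distinct multi-indices, not on term-wise estimates: e.g.\ for $b=4$ the terms $(j_0,j_1)=(4,0)$ and $(0,2)$ each have valuation $-7$, one unit below the bound $-6$ that the proof needs, and only their \emph{sum} has valuation $\ge-6$.

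What the paper does at this point --- and what your sketch is missing --- is a recursion of the method into itself. After pulling $2^{-b}$ out of the subsum over $j_0+2j_1=b$, that subsum is literally the coefficient of $z^{b}$ in $f(z)=\exp\bigl(\mu_0z+\mu_2\frac{z^2}{2}\bigr)$, where $\mu_0=2^{-l+1}(s_{2^l}-s_{2^{l-1}})$ and $\mu_2=2^{-l+2}(s_{2^{l+1}}-s_{2^{l-1}})$; and Conditions \eqref{eq:p^m2a}--\eqref{eq:p^m2b} say exactly that $f$ satisfies the hypotheses of Corollary~\ref{thm:pdiv1u} with $p=2$, $l=2$, $m=0$ (namely $\mu_0\in\Z_2$ and $\mu_0\equiv\mu_2$ mod~$2\Z_2$, the latter because $A+2B\equiv4B\equiv0$ mod~$2^l$). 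The already-proved truncated Dwork theorem then delivers both the valuation bound for these inner coefficients and the period-$4$ recursion $\widehat Q_b\equiv 2^{-l+2}(s_{2^{l+1}}-s_{2^{l-1}})\widehat Q_{b-4}$ mod~$2$; the cancellation above is Dwork's lemma acting inside this inner application, which is precisely what your square heuristic gestures at but cannot replace. A secondary gap: your claim that any multi-index with $j_i\ne0$ for some $2^l<i<2^{l+1}$ ``overshoots the target bound'' is not a term-wise consequence of \eqref{eq:sdiff2} and Lemma~\ref{lem:ij}; in the paper, the case of a single such $j_{i_0}=1$ survives the term-wise estimate and is only excluded by playing that estimate off against the behaviour of $a-b$ modulo~$4$ in the combined bound, so this exclusion also requires an argument you have not supplied.
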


\begin{remark}
The reader should observe that, indeed, the conditions of this theorem
do not fit into the framework of Theorem~\ref{thm:pdiv1} with $p=2$.
Namely, the first congruence in \eqref{eq:p^m2a} tells us that we
should choose in addition $m=l-1$ in Theorem~\ref{thm:pdiv1}.
But then, for $i=2^{l+1}$, Condition~\eqref{eq:sdiff}
demands $v_2(\la_{2^{l+1}})\ge -2+l+1=l-1$, while, 
in that case, we have
$$v_2(\la_{2^{l+1}})=
v_2(s_{2^{l+1}}-s_{2^{l-1}})
=v_2\big((s_{2^{l+1}}-s_{2^{l}})+(s_{2^{l}}-s_{2^{l-1}})\big),
$$ 
so that the two congruences in
\eqref{eq:p^m2a} only imply $v_2(\la_{2^{l+1}})$
to be at least $l-2$, which is less by $1$.
\end{remark}

\begin{proof}[Proof of Theorem~{\em \ref{thm:pdiv12}}]
We proceed in the same manner as in the proof of
Theorem~\ref{thm:pdiv1}. Here, we have to bound the $2$-adic valuation
of
\begin{multline*} 
\frac {h_{a2^l+r}} {(a2^l+r)!}=
\sum_{i_0+2^lj_0+2^{l+1}j_1
+\sum_{i=2^l+1}^{2^{l+1}-1} ij_i+\sum_{i=2^{l+1}+1}^\infty ij_i=a2^l+r}
\widetilde H_{i_0}\\
\cdot
\frac {(s_{2^l}-s_{2^{l-1}})^{j_0}} {2^{j_0l}\,j_0!}
\frac {(s_{2^{l+1}}-s_{2^{l-1}})^{j_1}} {2^{j_1(l+1)}\,j_1!}
\underset{i\ne 2^{l+1}}{\prod _{i=2^l+1} ^{\infty}}
\frac {\la_i^{j_i}}
{i^{j_i}\,j_i!}
\end{multline*}
from below. 
The reader should note that the index formerly called $j_{p^{l+1}}$ 
has became $j_1$ here. 
Refining the previous approach, we rearrange the summation
indices according to the sum $j_0+2j_1$:
\begin{multline} \label{eq:H-H32} 
\frac {h_{a2^l+r}} {(a2^l+r)!}=
\sum_{b=0}^a
\sum_{j_0+2j_1=b}
\frac {(s_{2^l}-s_{2^{l-1}})^{j_0}} {2^{j_0l}\,j_0!}
\frac {(s_{2^{l+1}}-s_{2^{l-1}})^{j_1}} {2^{j_1(l+1)}\,j_1!}\\
\cdot
\sum_{i_0+\sum_{i=2^l+1}^{2^{l+1}-1} ij_i
+\sum_{i=2^{l+1}+1}^\infty ij_i=(a-b)2^l+r}
\widetilde H_{i_0}
\underset{i\ne 2^{l+1}}{\prod _{i=2^l+1} ^{\infty}}
\frac {\la_i^{j_i}}
{i^{j_i}\,j_i!}.
\end{multline}
The subsum over $j_0$ and $j_1$, $S_b$ say, equals
\begin{align} 
\notag
S_b&=\sum_{j_0+2j_1=b}
\frac {(s_{2^l}-s_{2^{l-1}})^{j_0}} {2^{j_0l}\,j_0!}
\frac {(s_{2^{l+1}}-s_{2^{l-1}})^{j_1}} {2^{j_1(l+1)}\,j_1!}\\
\notag
&
=\sum_{j_0+2j_1=b}
\frac {\big(2^{-l+1}(s_{2^l}-s_{2^{l-1}})\big)^{j_0}} {2^{j_0}\,j_0!}
\frac {\big(2^{-l+2}(s_{2^{l+1}}-s_{2^{l-1}})\big)^{j_1}} {2^{3j_1}\,j_1!}\\
&
=
2^{-b}\sum_{j_0+2j_1=b}
\frac {\big(2^{-l+1}(s_{2^l}-s_{2^{l-1}})\big)^{j_0}} {j_0!}
\frac {\big(2^{-l+2}(s_{2^{l+1}}-s_{2^{l-1}})\big)^{j_1}} {2^{j_1}\,j_1!}.
\label{eq:j01}
\end{align}
Writing $\mu_0=2^{-l+1}(s_{2^l}-s_{2^{l-1}})$ and
$\mu_2=2^{-l+2}(s_{2^{l+1}}-s_{2^{l-1}})$, we observe that the sum in
the last line equals the coefficient of $z^{b}$ in the series
$f(z)=\exp\left(\mu_0 z+\mu_2\frac {z^2} {2}\right)$. 
Thus, due to \eqref{eq:p^m2a} and \eqref{eq:p^m2b}, we may apply
Corollary~\ref{thm:pdiv1u} with $p=2$, $l=2$, and $m=0$ to $f(z)$.
Consequently,
the $2$-adic valuation of $S_b$ in \eqref{eq:j01} may be bounded
from below by
\begin{equation} \label{eq:pj01}
v_2(S_b)\ge-b
+\left(\fl{\frac {b} {2}}-\fl{\frac {b} {4}}\right)-v_2(b!)=
-b-2\fl{\frac {b} {4}}-\sum_{s\ge3}\fl{\frac {b} {2^s}}, 
\end{equation}
and the quotient
$\widehat Q_b=b!\,S_b/2^{-b+\fl{b/2}-\fl{b/4}}$ 
satisfies
\begin{equation} \label{eq:j01rek}
\widehat Q_b\equiv 2^{-l+2}(s_{2^{l+1}}-s_{2^{l-1}})\widehat Q_{b-4} 
\quad \text{mod $2\Z_2$} .
\end{equation}
Next we turn our attention to the second subsum, say $S_{a,b}$,
\begin{equation*} 
S_{a,b}=\sum_{i_0+\sum_{i=2^l+1}^{2^{l+1}-1} ij_i
+\sum_{i=2^{l+1}+1}^\infty ij_i=(a-b)2^l+r}
\widetilde H_{i_0}
\underset{i\ne 2^{l+1}}{\prod _{i=2^l+1} ^{\infty}}
\frac {\la_i^{j_i}}
{i^{j_i}\,j_i!}.
\end{equation*}
By using \eqref{eq:sdiff2}, we see that
the summand in this sum may be bounded from below in the following manner:
{\allowdisplaybreaks
\begin{align}
\notag
-\underset{i\ne 2^{l+1}}
{\sum_{i=2^l+1}^\infty} j_i&\fl{\frac {i} {2^{l}}} 
+\underset{i\ne2^{l+1}}{\sum_{i=2^{l}+1}^{\infty}}
\left(-2^{\fl{\log_2i}-l}+\cl{\frac {i} {2^{l+2}}}+1\right)j_{i}
-\sum_{s\ge1}\underset{i\ne 2^{l+1}}
{\sum_{i=2^l+1}^\infty}\fl{\frac {j_i} {2^s}}
+{\sum_{i=2^{l+1}+1}^\infty} j_i\\
\notag
&\kern1cm
\ge
-\left\lfloor \underset{i\ne 2^{l+1}}
{\sum_{i=2^l+1}^\infty} j_i\frac {i} {2^{l}} \right\rfloor
+\sum_{i=2^{l}+1}^{2^{l+1}-1} j_{i}
-\sum_{i=2^{l+1}+1}^{\infty} 
\left(2^{\fl{\log_2i}-l}-\cl{\frac {i} {2^{l+2}}}-1\right)j_{i}\\
\notag
&\kern3cm
-\sum_{s\ge1}\underset{i\ne 2^{l+1}}
{\sum_{i=2^l+1}^\infty}\fl{\frac {ij_i} {2^{s+l}}}
+\sum_{s\ge1}
{\sum_{i=2^{l+1}+1}^\infty}\left(\fl{\frac {ij_i} {2^{s+l}}}
-
\fl{\frac {j_i} {2^{s}}}\right)\\
\notag
&\kern1cm
\ge
-\left\lfloor\frac {1} {2^l}\underset{i\ne 2^{l+1}}
{\sum_{i=2^l+1}^\infty} ij_i\right\rfloor
+\sum_{i=2^{l}+1}^{2^{l+1}-1} \frac {ij_{i}} {2^{l+2}}
+\sum_{i=2^{l+1}+1}^{\infty} 
\cl{\frac {i} {2^{l+2}}}j_{i}
-\sum_{s\ge1}\fl{{\underset{i\ne 2^{l+1}}
{\sum_{i=2^l+1}^\infty} \frac {ij_i} {2^{s+l}}}}\\
&\kern1cm
>
-(a-b)
+\cl{\frac {a-b} {4}}
-\sum_{s\ge1}\fl{\frac {a-b} {2^{s}}}
-1,
\label{eq:bound22}
\end{align}}%
where we have used Lemma~\ref{lem:ij} with $p=2$ to obtain the 
next-to-last line,
and the dependence of the summation indices of the sum
in \eqref{eq:H-H32} to obtain the last line.
Moreover, since, in passing from the first to the second line in
\eqref{eq:bound22}, we left out the last term of the first line,
the right-hand side of \eqref{eq:bound22} can be increased by $1$
as soon as one of the $j_i$'s with 
$i\ge 2^{l+1}+1$ is non-zero. Furthermore, when passing
from the first estimate to the second, we used the inequality
$j_i\ge\frac {ij_i} {2^{l+2}}$ for all $i$ with $2^{l}+1\le i\le
2^{l+1}-1$. In particular, for these $i$ we have
$j_i-\frac {ij_i} {2^{l+2}}\ge \frac {j_i} {2}$. Hence, 
the right-hand side of \eqref{eq:bound22} can be increased by $1$
as soon as the sum $\sum_{i=2^l+1}^{2^{l+1}-1}j_i$
should be at least~$2$.

We now use the estimates \eqref{eq:pj01} for $S_b$ and
\eqref{eq:bound22} for $S_{a,b}$ in \eqref{eq:H-H32}, to obtain
{\allowdisplaybreaks
\begin{align}
\notag
v_2(h_{a2^l+r})&\ge 
\sum_{s\ge1}\fl{\frac {a2^l+r} {2^{s}}}
-b-2\fl{\frac {b} {4}}\\
\notag
&\kern3cm
-\sum_{s\ge3}\fl{\frac {b} {2^s}}
-(a-b)
-\sum_{s\ge1}\fl{\frac {a-b} {2^{s}}}
+\cl{\frac {a-b} {4}}\\
\notag
&\ge
\sum_{s=1}^{l-1}\fl{\frac {a2^l+r} {2^{s}}}
+\sum_{s\ge l}\fl{\frac {a2^l+r} {2^{s}}}
-a-2\fl{\frac {b} {4}}\\
\notag
&\kern3cm
-\sum_{s\ge3}\fl{\frac {a} {2^s}}
-\fl{\frac {a-b} {2}}
-\fl{\frac {a-b} {4}}
+\cl{\frac {a-b} {4}}\\
\notag
&\ge
\sum_{s=1}^{l-1}\fl{\frac {a2^l+r} {2^{s}}}
+\sum_{s\ge l+1}\fl{\frac {a2^l} {2^{s}}}
-2\fl{\frac {b} {4}}-\sum_{s\ge3}\fl{\frac {a} {2^s}}
-2\fl{\frac {a-b} {4}}\\
\notag
&\ge
\sum_{s=1}^{l-1}\fl{\frac {a2^l+r} {2^{s}}}
+\sum_{s\ge 1}\fl{\frac {a} {2^{s}}}
-2\fl{\frac {a} {4}}-\sum_{s\ge3}\fl{\frac {a} {2^s}}\\
&\ge
\sum_{s=1}^{l-1}\fl{\frac {a2^l+r} {2^{s}}}
+\fl{\frac {a} {2}}
-\fl{\frac {a} {4}}.
\label{eq:a-b}
\end{align}}%
This is exactly the bound in \eqref{eq:vpH12} with $n=a2^l+r$,
where $0\le r<2^l$.

It should be noted that the remarks after \eqref{eq:bound22} also
show that the bound \eqref{eq:vpH12} for a summand in \eqref{eq:H-H32} 
can only be tight if
$j_i=0$ for $i\ge 2^{l+1}+1$, if at most one of the $j_i$'s with
$2^{l}+1\le i\le 2^{l+1}-1$ is non-zero, and if such a non-zero $j_i$
does not exceed~$1$.
On the other hand, when passing from the second to the third estimate in
\eqref{eq:a-b}, equality occurs only if \hbox{$a-b\not\equiv 1$}~(mod~4).
Let us suppose that we are in the ``adverse" case where $j_{i_0}=1$ for
some $i_0$ between $2^l+1$ and $2^{l+1}-1$ and that all other $j_i$'s
are zero. Then the first line in chain of inequalities
\eqref{eq:bound22} says that the $p$-adic valuation of a summand in
the sum $S_{a,b}$ is at least $0$, while the last line says that
the $p$-adic valuation of such a summand is at least
$$
-(a-b)
+\cl{\frac {a-b} {4}}
-\sum_{s\ge1}\fl{\frac {a-b} {2^{s}}}.
$$
As soon as $a-b\ge2$, the last expression is $\le -1$, and thus
the estimation \eqref{eq:bound22} would not be tight.
On the other hand, we just saw that $a-b=1$ implies that the
estimation \eqref{eq:a-b} would not be tight. The only remaining case
is $a-b=0$, but this contradicts $j_{i_0}=1$ and the dependence of
the summation indices in the sum $S_{a,b}$.
Consequently, for the bound \eqref{eq:vpH12} to be tight for the
corresponding summand (multiplied by $(a2^l+r)!$),
all $j_i$'s with $i\ge 2^{l}+1$ 
must vanish, or, equivalently, $a-b=0$. In other words,
remembering \eqref{eq:pj01}, we have
$$
\frac {h_{a2^l+r}} {(a2^l+r)!}\equiv \widetilde H_rS_a\quad 
\text{mod $2^{-a-2\fl{{a} /{4}}-\sum_{s\ge3}\fl{ {a}/ {2^s}}+1}\Z_2$},
$$
or, recalling that $\widetilde H_r=\frac {h_r} {r!}$,
\begin{align} 
\notag
Q_{a2^l+r}&=h_{a2^l+r}
2^{-\sum_{s=1}^{l-1}\fl{ (a2^l+r)/ {2^s}}-\fl{{a} /{2}}+\fl{ {a}/ {4}}}\\
\notag
&\equiv (a2^l+r)!\,\widetilde H_{r}S_a
2^{-a\sum_{s=1}^{l-1}2^{l-s}
-\sum_{s=1}^{l-1}\fl{ {r}/ {2^s}}-\fl{{a} /{2}}+\fl{ {a}/ {4}}}
\quad \text{mod $2\Z_2$}\\
\notag
&\equiv 
h_{r}\,
2^{-\sum_{s=1}^{l-1}\fl{ {r}/ {2^s}}}
\,a!\,S_a\,
2^{a-\fl{{a} /{2}}+\fl{ {a}/ {4}}}
\quad \text{mod $2\Z_2$}\\
&\equiv Q_r\widehat Q_a\quad \text{mod $2\Z_2$}.
\label{eq:QQ}
\end{align}
Using \eqref{eq:j01rek}, this chain of congruences can be continued,
\begin{align*}
Q_{a2^l+r}&\equiv 2^{-l+2}(s_{2^{l+1}}-s_{2^{l-1}})
Q_r\widehat Q_{a-4}\quad \text{mod $2\Z_2$}\\
&\equiv 2^{-l+2}(s_{2^{l+1}}-s_{2^{l-1}})
Q_{2^l(a-4)+r}\quad \text{mod $2\Z_2$}\\
&\equiv 2^{-l+2}(s_{2^{l+1}}-s_{2^{l}})
Q_{2^la+r-2^{l+2}}\quad \text{mod $2\Z_2$},
\end{align*}
where we have used \eqref{eq:QQ} with $a$ replaced by $a-4$ in the second
line and \eqref{eq:p^m2a} in the last line.
This congruence is equivalent to \eqref{eq:Qrek12} with $n=a2^l+r$.

In order to establish the final assertion, we assume
\hbox{$s_{2^{l}}\not\equiv s_{2^{l+1}}$~{mod~$2^{l-1}\Z_2$}}.
From \eqref{eq:p^m2b}, it then follows that also
\hbox{$s_{2^{l-1}}\not\equiv s_{2^{l}}$~{mod~$2^{l}\Z_2$}}.
The initial condition $Q_0=1$, and the congruence
\begin{align*}
Q_{2^{l}}&\equiv Q_{0}\widehat Q_1\quad \text{mod $2\Z_2$}\\
&\equiv 2^{-l+1}(s_{2^l}-s_{2^{l-1}})\quad \text{mod $2\Z_2$}\\
&\equiv 1\quad \text{mod $2\Z_2$},
\end{align*}
together with \eqref{eq:Qrek12} then imply the assertion for $n$
congruent to $0$ or $2^l$ modulo~$2^{l+2}$. To see the remaining
assertions, let us write $s_{2^{l+1}}=s_{2^{l-1}}+2^{l-2}+\al2^{l-1}$.
Then we have the congruences
\begin{align*}
Q_{2^{l+1}}&\equiv Q_{0}\widehat Q_2\quad \text{mod $2\Z_2$}\\
&\equiv \frac {1} {2}\big(2^{-l+1}(s_{2^l}-s_{2^{l-1}})\big)^2
+\frac {1} {2}2^{-l+2}(s_{2^{l+1}}-s_{2^{l-1}})
\quad \text{mod $2\Z_2$}\\
&\equiv \frac {1} {2}+\frac {1} {2}+\al\quad \text{mod $2\Z_2$}\\
&\equiv 1+\al\quad \text{mod $2\Z_2$},
\end{align*}
and
\begin{align*}
Q_{3\cdot2^{l}}&\equiv Q_{0}\widehat Q_3\quad \text{mod $2\Z_2$}\\
&\equiv \frac {3!} {2}
\bigg(\frac {1} {3!}\big(2^{-l+1}(s_{2^l}-s_{2^{l-1}})\big)^3\\
&\kern2cm
+\frac {1} {2}2^{-l+1}(s_{2^l}-s_{2^{l-1}})
2^{-l+2}(s_{2^{l+1}}-s_{2^{l-1}})\bigg)
\quad \text{mod $2\Z_2$}\\
&\equiv 
2^{-l+1}(s_{2^l}-s_{2^{l-1}})
\left(\frac {1} {2}\big(2^{-l+1}(s_{2^l}-s_{2^{l-1}})\big)^2
+\frac {3} {2}
2^{-l+2}(s_{2^{l+1}}-s_{2^{l-1}})\right)
\quad \text{mod $2\Z_2$}\\
&\equiv \left(\frac {1} {2}
+\frac {3} {2}+3\al
\right)
\quad \text{mod $2\Z_2$}\\
&\equiv \al
\quad \text{mod $2\Z_2$},
\end{align*}
which, together with \eqref{eq:Qrek12}, complete the proof of the theorem.
\end{proof}

\section{$p$-Divisibility of coefficients in exponentials of
  power series, II} \label{sec:pdiv2}

\noindent In this section we present our second set of 
main results. They address the case where we have less precise
information on the coefficients $s_n$ of the series $S(z)$
in \eqref{eq:H-S}. More specifically, what can we say if 
Condition~\eqref{eq:pS1} is ``truncated" to hold only up to the
coefficient of $z^{p^{l}}$, but beyond this threshold
we do not assume any additional
information, except that all $s_n$'s should lie in $\Z_p$?
While we cannot expect a result as strong as the bound \eqref{eq:vpH10}
in Corollary~\ref{thm:pdiv10} with $l$ replaced by $l+1$, 
the next theorem tells us that the
$p$-divisibility of the coefficients of the exponential $H(z)$ in
\eqref{eq:H-S} is still higher than one might expect.

\begin{theorem} \label{thm:pdiv2}
For a prime number $p\ge3,$
let\/ $S(z)=\sum_{n\ge1}\frac {s_n} {n}z^n$ be a formal power series
with $s_n\in\Z_p$ for all $n,$ and let 
$H(z)=\sum_{n\ge0}\frac {h_n} {n!}z^n$ be the exponential of $S(z)$.
Given a positive integer $l$
such that $(p,l)\ne(3,1),$ we assume that
\begin{equation} \label{eq:pS2}
S(z^{p})-pS(z)=pJ(z)
+O\left(z^{p^{l}+1}\right) 
\end{equation}
with $J(z)\in \Z_p[z]$.
Then
\begin{equation} \label{eq:vpH2}
v_p(h_n)\ge \sum_{s \ge 1}\fl{\frac {n} {p^{s}}} 
-(l-1)\fl{\frac {n} {p^l}}-\sum_{s \ge l}\fl{\frac {n} {2p^{s}}} 
\end{equation}
for all $n$.
\end{theorem}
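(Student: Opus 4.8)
The plan is to follow the comparison-with-a-reference-series strategy used in the proofs of Theorems~\ref{thm:pdiv1} and~\ref{thm:pdiv12}, but now exploiting only the integrality $s_n\in\Z_p$ for the ``uncontrolled'' coefficients of index exceeding $p^l$. First I would build a reference series $\widetilde S(z)=\sum_{n\ge1}\frac{\widetilde s_n}{n}z^n$ agreeing with $S(z)$ through degree $p^l$, by setting $\widetilde s_n=s_n$ for $n\le p^l$ and, for $n>p^l$, $\widetilde s_n=\widetilde s_{n/p}$ when $p\mid n$ and $\widetilde s_n=0$ when $p\nmid n$. A coefficient comparison shows $\widetilde S(z^p)-p\widetilde S(z)\in p\Z_p[[z]]$: for $n\le p^l$ the relevant coefficient coincides with that of $S(z^p)-pS(z)$, which lies in $p\Z_p$ by \eqref{eq:pS2}, while for $n>p^l$ it vanishes by construction. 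Hence Lemma~\ref{lem:dwork} applies to $\widetilde S$, so $\widetilde H(z):=\exp\big(\widetilde S(z)\big)=\sum_{n\ge0}\widetilde H_nz^n$ has all $\widetilde H_n\in\Z_p$. Writing $R(z):=S(z)-\widetilde S(z)=\sum_{i>p^l}\frac{\rho_i}{i}z^i$ with $\rho_i:=s_i-\widetilde s_i\in\Z_p$, we have $H(z)=\widetilde H(z)\exp\big(R(z)\big)$.

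Next I would extract the coefficient of $z^n$, obtaining
\[
\frac{h_n}{n!}=\sum_{i_0+\sum_{i>p^l}ij_i=n}\widetilde H_{i_0}\prod_{i>p^l}\frac{\rho_i^{j_i}}{i^{j_i}\,j_i!}.
\]
Since $\widetilde H_{i_0}\in\Z_p$ and $v_p(\rho_i)\ge0$, the $p$-adic valuation of each summand is at least $-\sum_{i>p^l}\big(j_i\,v_p(i)+v_p(j_i!)\big)$. Combining with Legendre's formula $v_p(n!)=\sum_{s\ge1}\fl{n/p^s}$ on the left-hand side, the bound \eqref{eq:vpH2} reduces to the purely combinatorial assertion that
\[
\sum_{i>p^l}\big(j_i\,v_p(i)+v_p(j_i!)\big)\le(l-1)\fl{\frac{n}{p^l}}+\sum_{s\ge l}\fl{\frac{n}{2p^s}}
\]
holds for every choice of non-negative integers $i_0,(j_i)_{i>p^l}$ with $i_0+\sum_{i>p^l}ij_i=n$.

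The heart of the matter, and the step I expect to be the main obstacle, is this combinatorial inequality. I would reduce it to a single-index estimate: using superadditivity $\fl{x}+\fl{y}\le\fl{x+y}$, monotonicity of the floor, and $\sum_{i>p^l}ij_i\le n$, it suffices to prove, for each fixed $i>p^l$ and each $j\ge0$, that
\[
j\,v_p(i)+v_p(j!)\le(l-1)\fl{\frac{ij}{p^l}}+\sum_{s\ge l}\fl{\frac{ij}{2p^s}};
\]
summing this over $i$ and applying superadditivity term-by-term then returns the aggregate inequality. The single-index estimate I would establish by a case analysis on $v:=v_p(i)$ and the cofactor $m:=i/p^v$ (where $m>p^{l-v}$ because $i>p^l$), rewriting the halved floors via $\fl{ij/(2p^s)}=\fl{\fl{ij/p^s}/2}$ and controlling the sum over $s$ with floor-function estimates in the spirit of Lemma~\ref{lem:ij} (applied, as in the proof of Theorem~\ref{thm:pdiv12}, to the halving). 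The regimes $v\ge l$ and $v<l$ behave differently, and the tight configurations are $i=p^{l+1}$, $j=1$ (from above) and $i=p^{l-1}(p+1)$, $j=1$ (an equality), with the boundary equality $m=2$, $v=l$ also requiring attention.

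The hypotheses $p\ge3$ and $(p,l)\ne(3,1)$ surface precisely here. The configuration $i=p^{l+1}$, $j=1$ demands $(l-1)p+\fl{p/2}\ge l+1$, which holds for all $p\ge3$ once $l\ge2$, and for $l=1$ exactly when $p\ge5$; the excluded pair $(3,1)$ is the unique case where the single-index inequality fails, and indeed the valuation bound itself then breaks down, as the decomposition $n=p^{l+1}=9$ already shows. Since the passage from the single-index estimate to the global one is automatic from superadditivity, no admissible global configuration can beat the worst single-index one, so the entire difficulty is concentrated in the displayed single-index estimate.
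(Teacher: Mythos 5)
Your proposal is correct and would yield a complete proof, but the way you organise the key estimate is genuinely different from the paper's. The skeleton is the same: the paper also builds a reference series satisfying Dwork's condition (it reuses the $\la_i$'s from Theorem~\ref{thm:pdiv1}, whose implicit reference coefficients differ from your $\widetilde s_n$ only in how indices of the form $p^{l+e}$ are propagated --- immaterial, since only $\la_i\in\Z_p$ is ever used), arrives at the same multisum \eqref{eq:H-H4}, and reduces everything to bounding $\sum_i\bigl(j_i\,v_p(i)+v_p(j_i!)\bigr)$ against $(l-1)\fl{n/p^l}+\sum_{s\ge l}\fl{n/(2p^s)}$. Where you diverge is in how that bound is proved. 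The paper runs one aggregate chain \eqref{eq:Hest}: it uses $v_p(i)\le l-1+\frac{i}{2p^l}$ for $i\ge 2p^l$ --- an inequality that fails by exactly $\frac12$ at $i=p^{l+1}$ when $p=3$, whence the correction term $\chi(p=3)\frac{j_{p^{l+1}}}{2}$ --- bounds $v_p(j_i!)$ by $\frac{j_i}{p-1}$ for $p^l<i<2p^l$ and by $\sum_s\fl{ij_i/(2p^{s+l})}$ beyond, aggregates the floors, and only at the very end invokes $(p,l)\ne(3,1)$ to absorb the correction term into $(l-1)j_{p^{l+1}}$. You instead factor the entire difficulty through the per-index inequality $j\,v_p(i)+v_p(j!)\le(l-1)\fl{ij/p^l}+\sum_{s\ge l}\fl{ij/(2p^s)}$ and recover the aggregate statement by superadditivity; this reduction is valid, and the per-index inequality is in fact true under the stated hypotheses (your case split goes through: for $v_p(i)\le l-1$ it follows from $\fl{ij/p^l}\ge j$ together with the termwise bound $\fl{ij/(2p^{l+t-1})}\ge\fl{j/p^t}$, and for $v_p(i)\ge l$ from estimates of the kind you indicate, the binding configurations being exactly the ones you list, with equality at $i=2p^l$ for $l=1$ and at $i=p^{l-1}(p+1)$, $j=1$). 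So there is no gap, only a sketched-but-correct case analysis. The trade-off: your modular version is easier to verify and makes completely transparent why $(3,1)$ is excluded (the per-index inequality fails precisely at $i=p^{l+1}$, $j=1$, and your example $S(z)=z^9/9$, $h_9=8!$, indeed violates \eqref{eq:vpH2}); the paper's cross-index bookkeeping, though messier, is the template the authors then refine in Theorems~\ref{thm:pdiv31}, \ref{thm:pdiv22} and \ref{thm:pdiv3}, where the improvements come from interactions between different indices $i$ that a purely per-index reduction cannot exploit.
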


\begin{remarks}
(1)
We should compare the bound in \eqref{eq:vpH2} to the one in \eqref{eq:vpH10},
in two different ways.

First, let us consider Corollary~\ref{thm:pdiv10}, and compare
it to Theorem~\ref{thm:pdiv2}. The difference in conditions
is that, while, under the assumptions of Corollary~\ref{thm:pdiv10}, 
Condition~\eqref{eq:pS1d} is (potentially) 
violated starting from the coefficient of
$z^{p^l}$ on, under the assumptions of Theorem~\ref{thm:pdiv2}
it is (potentially) violated starting from the coefficient of 
$z^{p^l+1}$ ``only."
The ``gain" in $p$-divisibility is given by the difference between
\eqref{eq:vpH2} and \eqref{eq:vpH10}, that is, by
$$
\sum_{s \ge l}\left(\fl{\frac {n} {p^{s}}}-\fl{\frac {n} {2p^{s}}}\right).
$$

On the other hand, let us consider Corollary~\ref{thm:pdiv10} with $l$
replaced by $l+1$, and compare
this to Theorem~\ref{thm:pdiv2}. Now the difference in conditions
is that, under the assumptions of Corollary~\ref{thm:pdiv10}, 
Condition~\eqref{eq:pS1d} is (potentially) violated  
starting from the coefficient of
$z^{p^{l+1}}$ on, while, under the assumptions of Theorem~\ref{thm:pdiv2},
it is still (potentially) violated already 
starting from the coefficient of $z^{p^l+1}$ on.
The ``loss" in $p$-divisibility is then given by the difference between
\eqref{eq:vpH10} with $l$ replaced by $l+1$ and \eqref{eq:vpH2}, that is, by
$$
(l-1)\fl{\frac {n} {p^l}}+\fl{\frac {n} {2p^{l}}}
-(l+1)\fl{\frac {n} {p^{l+1}}}+\fl{\frac {n} {2p^{l+1}}}
+\sum_{s \ge l+2}\left(\fl{\frac {n} {2p^{s}}}
-\fl{\frac {n} {p^{s}}} \right).
$$

\medskip
(2) The bound in \eqref{eq:vpH2} could be further improved,
albeit very likely at the cost of having to introduce very
complicated arithmetic functions. This is the reason why we have 
refrained from trying to formulate an improved version of
Theorem~\ref{thm:pdiv2}. 
That the bound \eqref{eq:vpH2}
cannot be tight for infinitely many~$n$, can be seen from the
proof: when going from the next-to-last to the last estimate
in \eqref{eq:Hest}, the inequality
$$
-\sum_{i=p^l+1}^\infty j_i(l-1)
\ge-a(l-1)
$$
is used. This inequality will be ``very far from equality" if
$n=ap^l+r$ is large.
\end{remarks}

\begin{proof}[Proof of Theorem~{\em \ref{thm:pdiv2}}]
Following again the proof of Theorem~\ref{thm:pdiv1},
we let $n=ap^l+r$ with $0\le r<p^l$.
Here, instead of \eqref{eq:H-H3}, we obtain
\begin{equation} \label{eq:H-H4} 
\frac {h_{ap^l+r}} {(ap^l+r)!}=
\sum_{i_0+\sum_{i=p^l+1}^\infty ij_i=ap^l+r}
\widetilde H_{i_0}
\prod _{i=p^l+1} ^{\infty}
\frac {\la_i^{j_i}}
{i^{j_i}\,j_i!},
\end{equation}
with $\widetilde H_{i_0}\in \Z_p$ for all $i_0$, and the $\la_i$'s
being defined as in the statement of Theorem~\ref{thm:pdiv1}. Since
here we are assuming that all $s_i$'s are in $\Z_p$ (instead of
$\Q_p$, as in Theorem~\ref{thm:pdiv1}), we have $\la_i\in \Z_p$
for all $i$. For the $p$-adic valuation of the summand in the above
sum, we then obtain
{\allowdisplaybreaks
\begin{align}
\notag
v_p&\left(\widetilde H_{i_0}
\prod _{i=p^l+1} ^{\infty}
\frac {\la_i^{j_i}}
{i^{j_i}\,j_i!}\right)
\ge -\sum_{i=p^l+1}^\infty j_i\cdot v_p(i)
-\sum_{i=p^l+1}^\infty\sum_{s\ge1}\fl{\frac {j_i} {p^s}}\\
\notag
&\kern1cm
\ge
-\sum_{i=p^l+1}^{2p^l-1} j_i(l-1)
-\sum_{i=2p^l}^\infty j_i\cdot v_p(i)
-\sum_{i=p^l+1}^{2p^l-1}\sum_{s\ge1}\fl{\frac {j_i} {p^s}}
-\sum_{i=2p^l}^\infty\sum_{s\ge1}\fl{\frac {j_i} {p^s}}\\
\notag
&\kern1cm
\ge
-\sum_{i=p^l+1}^{2p^l-1} j_i(l-1)
-\sum_{i=2p^l}^\infty j_i\left(l-1+\frac {i} {2p^l}\right)
-\chi(p=3)\frac {j_{p^{l+1}}}2 \\
\notag
&\kern5cm
-\sum_{i=p^l+1}^{2p^l-1}\frac {j_i} {p-1}
-\sum_{s\ge1}\sum_{i=2p^l}^\infty\fl{\frac {ij_i} {2p^{s+l}}}\\
\notag
&\kern1cm
>
-\sum_{i=p^l+1}^\infty j_i(l-1)
-\Bigg\lfloor\sum_{i=p^l}^\infty j_i\frac {i} {2p^l}\Bigg\rfloor-1\\
\notag
&\kern3cm
+\sum_{i=p^l+1}^{2p^l-1}\left(\frac {ij_i} {2p^l}
-\frac {j_i} {p-1}\right)
-\sum_{s\ge1}\Bigg\lfloor\sum_{i=2p^l}^\infty\frac {ij_i}
{2p^{s+l}}\Bigg\rfloor
-\chi(p=3)\frac {j_{p^{l+1}}}2\\
&\kern1cm
>
-a(l-1)+(l-1)j_{p^{l+1}}
-\fl{\frac {a} {2}}
-\sum_{s\ge1}\fl{\frac {a} {2p^{s}}}-\chi(p=3)\frac {j_{p^{l+1}}}2
-1.
\label{eq:Hest}
\end{align}}%
Here, we have again used Legendre's formula \cite[p.~10]{LegeAA} in
the first step, the estimation
$$
\sum_{s\ge1}\fl{\frac {j} {p^s}}\le 
\sum_{s\ge1}\frac {j} {p^s}\le \frac {j} {p-1}
$$
in the third step, and the dependence of the summation indices of the sum
in \eqref{eq:H-H4} in the last step. The strict inequality in the 
fourth step results from the inequality $-\al>-\fl\al-1$. 
By assumption, we have $(p,l)\ne (3,1)$, and therefore
$$
(l-1)j_{p^{l+1}}-\chi(p=3)\frac {j_{p^{l+1}}}2\ge0.$$
Thus, out of \eqref{eq:Hest} we obtain exactly the bound in
\eqref{eq:vpH2} with $n=ap^l+r$, as desired.
\end{proof}

By the same approach, we also obtain a corresponding result for
the exceptional case where $p=3$ and $l=1$.

\begin{theorem} \label{thm:pdiv31}
Let\/ $S(z)=\sum_{n\ge1}\frac {s_n} {n}z^n$ be a formal power series
with $s_n\in\Z_3$ for all $n,$ and let 
$H(z)=\sum_{n\ge0}\frac {h_n} {n!}z^n$ be the exponential of $S(z)$.
We assume that
\begin{equation} \label{eq:pS31}
S(z^{3})-3S(z)=3J(z)
+O\left(z^{4}\right) 
\end{equation}
with $J(z)\in \Z_3[z]$.
Then
\begin{equation} \label{eq:vpH31}
v_3(h_n)\ge \sum_{s \ge 1}
\left(\fl{\frac {n} {3^{s}}} 
-\fl{\frac {n} {2\cdot3^{s}}} \right)
-\fl{\frac {n} {18}} 
\end{equation}
for all $n$.
\end{theorem}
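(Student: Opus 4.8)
The plan is to re-run the proof of Theorem~\ref{thm:pdiv2} in the single case $(p,l)=(3,1)$ that it excludes. Writing $n=3a+r$ with $0\le r<3$, I would introduce the same reference series $\widetilde S$ and correction coefficients $\la_i$ ($i\ge 4$), expand $h_n/n!$ exactly as in \eqref{eq:H-H4}, and estimate the $3$-adic valuation of each summand by the chain of inequalities \eqref{eq:Hest}, using $\la_i\in\Z_3$, Legendre's formula for $v_3\big((3a+r)!\big)$, and the constraint $i_0+\sum_{i\ge4}ij_i=3a+r$.

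Each step of \eqref{eq:Hest} survives verbatim except for the contribution of the single index $i=p^{l+1}=9$. There $v_3(9)=2$ exceeds the reference quantity $(l-1)+\frac{9}{2\cdot 3^l}=\frac32$, so the estimate $v_3(i)\le (l-1)+\frac{i}{2p^l}$ fails and one is left with the correction $-\frac12 j_9$, the $\chi(p=3)$ term of \eqref{eq:Hest}. In Theorem~\ref{thm:pdiv2} this defect was absorbed by the companion term $(l-1)j_9$; with $l=1$ that term is identically $0$, so $-\frac12 j_9$ genuinely survives. This surviving defect is the whole difference between the two theorems and is the source of the extra loss $-\fl{n/18}$ in \eqref{eq:vpH31}; handling it is where the argument must depart from Theorem~\ref{thm:pdiv2}.

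To control it I would use hypothesis \eqref{eq:pS31} more fully than in Theorem~\ref{thm:pdiv2}: integrality of the coefficient of $z^9$ in $S(z^3)-3S(z)$, namely $\frac13(s_3-s_9)$, together with $s_1\equiv s_3$ from the coefficient of $z^3$, forces $s_9\equiv s_1\pmod 3$, i.e. $v_3(\la_9)\ge1$. This extra unit of divisibility on every factor $\la_9$ keeps the summands carrying a power of $9$ from dipping below the generic estimate, and the residual effect of $-\frac12 j_9$ is then bounded against the constraint $9j_9\le\sum_{i\ge4}ij_i\le n$, so that after rounding the valuation back to an integer it costs at most $\fl{n/18}=\fl{n/(2\cdot 3^{l+1})}$. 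Adding this single loss to the specialisation of \eqref{eq:vpH2} to $p=3$, $l=1$ and invoking Legendre's formula yields \eqref{eq:vpH31}. The main obstacle is precisely this $i=9$ bookkeeping: one must check that $9$ is the only index for which $v_3(i)$ overshoots $\frac{i}{2\cdot 3^l}$ (a quick check shows every larger multiple of $9$ satisfies $v_3(i)\le\frac{i}{6}$ comfortably), and that the combined effect of the forced divisibility of $\la_9$ and the crude bound $9j_9\le n$ collapses to the single clean term $\fl{n/18}$ rather than a more complicated sum of floor functions.
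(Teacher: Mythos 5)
Your overall strategy---rerunning the proof of Theorem~\ref{thm:pdiv2} for the excluded pair $(p,l)=(3,1)$ and isolating the defect $-\tfrac12 j_9$ coming from the single index $i=9$---is exactly the paper's approach (its entire proof of this theorem is the remark that it follows ``by the same approach'' as Theorem~\ref{thm:pdiv2}), and your check that $i=9$ is the only index with $v_3(i)>i/6$ is correct. The gap is the step where you claim to ``use hypothesis \eqref{eq:pS31} more fully'': \eqref{eq:pS31} is truncated at $z^4$, so it constrains only the coefficients of $z$, $z^2$, $z^3$ of $S(z^3)-3S(z)$; the coefficient $\tfrac13(s_3-s_9)$ of $z^9$ lies entirely inside the unconstrained $O(z^4)$ tail, and nothing forces it to be $3$-integral. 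Concretely, $S(z)=z^9/9$ (that is, $s_9=1$ and $s_n=0$ otherwise) satisfies \eqref{eq:pS31} with $J=0$ and $s_n\in\Z_3$ for all $n$, yet $s_9-s_3=1$ is a unit, so $v_3(\la_9)=0$. The extra unit of divisibility on which your handling of the $i=9$ bookkeeping hinges is therefore simply not available.

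This is moreover not a slip you can repair within your scheme, because the inequality you are aiming at is not reachable by any argument: for the same series $S(z)=z^9/9$ one has $h_9=9!/9$, hence $v_3(h_9)=v_3(9!)-2=2$, while the right-hand side of \eqref{eq:vpH31} at $n=9$ equals $(3-1)+(1-0)-\fl{9/18}=3$; the series $S(z)=z+z^3/3$ exhibits the same failure at $n=10$. What the method honestly yields is this: since valuations are integers, the surviving defect per summand is $\cl{j_9/2}$, and the constraint $9j_9\le n$ only gives $\cl{j_9/2}\le\cl{n/18}$, not $\le\fl{n/18}$; when $\fl{n/9}$ is odd these differ by $1$, and the examples above show the loss is genuinely attained. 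So executing the paper's own estimate proves \eqref{eq:vpH31} with $\cl{n/18}$ in place of $\fl{n/18}$ (the printed floor appears to be an off-by-one overstatement). Your instinct that some additional control on $\la_9$ would be needed to reach the floor version was sound, but that control does not follow from \eqref{eq:pS31}, and asserting it is the fatal step of the proposal.
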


By refining the above approach, and using ideas from the proof of
Theorem~\ref{thm:pdiv12}, we also obtain a corresponding result
for $p=2$, a case which was excluded in both Theorems~\ref{thm:pdiv2}
and~\ref{thm:pdiv31}. Again, there is a technical auxiliary result
which is needed for the proof. It is given separately in
Lemma~\ref{lem:32j}.

\begin{theorem} \label{thm:pdiv22}
Let\/ $S(z)=\sum_{n\ge1}\frac {s_n} {n}z^n$ be a formal power series
with $s_n\in\Z_2$ for all $n,$ and let 
$H(z)=\sum_{n\ge0}\frac {h_n} {n!}z^n$ be the exponential of $S(z)$.
Given a positive integer $l,$ we assume that
\begin{equation} \label{eq:pS22}
S(z^{2})-2S(z)=2J(z)
+O\left(z^{2^{l}+1}\right) 
\end{equation}
with $J(z)\in \Z_2[z]$.
Then
\begin{equation} \label{eq:vpH22}
v_2(h_n)\ge
\begin{cases} 
\fl{\frac {n} {2}}
-\fl{\frac {n} {4}},
&\text{if\/ }l=1,\\
\fl{\frac {n} {2}},
&\text{if\/ }l=2,\\
\sum_{s=1} ^{l+1}\fl{\frac {n} {2^{s}}}
-(l-1)\fl{\frac {n} {2^l}},
&\text{if\/ }l\ge3,
\end{cases}
\end{equation}
for all $n$.
\end{theorem}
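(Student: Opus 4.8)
The plan is to adapt the proof of Theorem~\ref{thm:pdiv2} to the prime $p=2$, the point being that the clean estimate $v_2(i)\le l-1+\frac{i}{2^{l+1}}$ that drives the $p\ge3$ argument fails — by exactly $1$ — precisely for the two indices $i=2^{l+1}$ and $i=2^{l+2}$, which therefore require separate, more careful treatment. As in the earlier proofs, I would write $n=a2^l+r$ with $0\le r<2^l$, split off from $S(z)$ the part $\widetilde S(z)$ satisfying the full Dwork condition \eqref{eq:pS1d} (so that $\widetilde H(z)=\exp(\widetilde S(z))$ has coefficients $\widetilde H_{i_0}\in\Z_2$ by Lemma~\ref{lem:dwork}), and extract the coefficient of $z^n$ to obtain the analogue of \eqref{eq:H-H4}: a sum over compositions $i_0+\sum_{i>2^l}ij_i=n$ of the products $\widetilde H_{i_0}\prod_{i>2^l}\frac{\la_i^{j_i}}{i^{j_i}\,j_i!}$, with all $\la_i\in\Z_2$. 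Note that, unlike Theorems~\ref{thm:pdiv1} and~\ref{thm:pdiv12}, the present statement asks only for the valuation bound \eqref{eq:vpH22} and no $Q_n$-congruence, so the Wilson-theorem tightness analysis is not needed.

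The core of the argument is the lower bound for the $2$-adic valuation of a single summand, namely $-\sum_{i>2^l}j_i\,v_2(i)-\sum_{i>2^l}\sum_{s\ge1}\fl{j_i/2^s}$. I would separate the two ``bad'' indices $i\in\{2^{l+1},2^{l+2}\}$ from the rest. For every other index $i>2^l$ one checks that $v_2(i)\le l-1+\frac{i}{2^{l+1}}$, so the floor-function manipulations of \eqref{eq:Hest} transfer (with $p=2$), the tail being controlled by the $p=2$ refinement of Lemma~\ref{lem:ij} supplied by Lemma~\ref{lem:32j}. For the two bad indices one records that the excess of $v_2(i)$ over $l-1+\frac{i}{2^{l+1}}$ equals $1$ in each case, whereas each unit of $j_{2^{l+1}}$ (resp.\ $j_{2^{l+2}}$) consumes mass $2^{l+1}$ (resp.\ $2^{l+2}$) from the constraint $\sum_{i>2^l}ij_i\le n$; the resulting slack in the bound $(l-1)\sum_{i>2^l}j_i\le(l-1)a$ is exactly what feeds the improvement by $\fl{n/2^{l+1}}$ of \eqref{eq:vpH22} over the naive $p=2$ specialisation of \eqref{eq:vpH2}. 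At the strategic places one must, as in \eqref{eq:bound22}, replace a floor $\fl{\cdot}$ by a ceiling $\cl{\cdot}$ in order to harvest this extra unit.

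I expect the bookkeeping for these two bad indices to be the main obstacle: one has to track the contributions of $j_{2^{l+1}}$ and $j_{2^{l+2}}$ simultaneously and verify that, after the floor/ceiling surgery, the total valuation never drops below the target $\sum_{s=1}^{l+1}\fl{n/2^s}-(l-1)\fl{n/2^l}$, which reduces to a single delicate inequality in $a$, $r$, and the $j_i$; Lemma~\ref{lem:32j} is tailored to close exactly this gap. Finally, the cases $l=1$ and $l=2$ must be handled separately, since there the stated bounds $\fl{n/2}-\fl{n/4}$ and $\fl{n/2}$ have a genuinely different shape from the $l\ge3$ formula (indeed the latter would give strictly larger, hence false, values at $l=1,2$): for these small $l$ the bad indices $2^{l+1},2^{l+2}$ sit too close to $2^l$ for the regrouping to gain anything, and a direct estimate — reducing, for $l=1$, essentially to the classical bound \eqref{eq:hnC2} — suffices.
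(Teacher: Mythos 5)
Your setup (writing $n=a2^l+r$, splitting off $\widetilde S$ so that $\widetilde H_{i_0}\in\Z_2$, extracting coefficients) and your reduction of the case $l=1$ to Corollary~\ref{thm:pdiv10} with $p=2$, $l=2$ both match the paper. But the core of your plan for $l\ge3$ has a genuine gap: the claim that, away from $i=2^{l+1}$ and $i=2^{l+2}$, ``the floor-function manipulations of \eqref{eq:Hest} transfer (with $p=2$)'' is false. Your arithmetic observation about where $v_2(i)\le l-1+\frac{i}{2^{l+1}}$ fails is correct, but the transfer breaks elsewhere, namely in the middle range $2^l<i<2^{l+1}$. The $p\ge3$ proof absorbs the factorial contributions of these indices via $\sum_{s\ge1}\fl{j_i/p^s}\le \frac{j_i}{p-1}$ together with the inequality $\frac{ij_i}{2p^l}-\frac{j_i}{p-1}\ge0$, which holds only because $\frac{i}{2p^l}>\frac{1}{2}\ge\frac{1}{p-1}$ for $p\ge3$. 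At $p=2$ one has $\frac{1}{p-1}=1>\frac{i}{2^{l+1}}$, so this step leaves a deficit of up to $\frac{j_i}{2}$ for \emph{every} index in $(2^l,2^{l+1})$; over a whole summand this deficit can be of order $a/2$, which is exactly the order of the improvement $\fl{n/2^{l+1}}$ that \eqref{eq:vpH22} claims over the naive $p=2$ version of \eqref{eq:vpH2} (cf.\ Remark~(2) after the theorem). So your bookkeeping loses roughly $a/2$ precisely where the theorem requires you to gain roughly $a/2$.

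Correspondingly, you have misplaced both the difficulty and the tool. In the paper's proof the indices $i\ge2^{l+1}$ --- including your two ``bad'' ones --- are handled uniformly by the elementary inequality $\fl{i/2^l}(l-1)\ge v_2(i)$, valid for $l\ge3$: the extra units of $\fl{i/2^l}$ automatically pay for the extra valuation, with no floor/ceiling surgery and no separate slack bookkeeping. (The tightness of this inequality at $i=2^{l+1}$, $l=3$, and its failure when $l\le2$, is the real reason the cases $l=1,2$ carry weaker bounds.) The genuinely delicate index is $i=3\cdot2^{l-1}$, the unique element of $(2^l,2^{l+1})$ with $v_2(i)=l-1$: its valuation exhausts the entire $(l-1)$-budget of that index, leaving nothing to pay for $v_2(j_i!)$. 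Lemma~\ref{lem:32j} is not a $p=2$ ``tail'' refinement of Lemma~\ref{lem:ij}; it is tailored to this one index (the $\frac{3}{2}$ is $3\cdot2^{l-1}/2^l$), and it produces the surplus $\frac{1}{2}\fl{j/2}$ which, for $l\ge3$, covers the otherwise unabsorbed $\fl{j_{3\cdot2^{l-1}}/2}$. Without a substitute for this restructuring --- the exact cancellation of the $\pm\sum_{2^l<i<2^{l+1}}j_i$ terms and the application of Lemma~\ref{lem:32j} inside \eqref{eq:Hest2} --- your argument does not close, and the same issue (untreated middle-range indices $5,6,7$) also undermines your sketch for $l=2$.
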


\begin{remarks}
(1) The bounds in \eqref{eq:vpH22} in the cases where $l=1$ or $l=2$
are worse than the bound in \eqref{eq:vpH22} in the generic case
(third line on the right-hand side), as can be seen by inspection.
The fact that, in the case where $l=1$, the bound cannot be improved
is provided for by the observation that, in that case,
Corollary~\ref{thm:pdiv10} with $p=2$ and
$l=2$ applies, and hence the tightness assertion given there holds.
On the other hand, 
the fact that, in the case where $l=2$, the bound cannot be improved
can be seen by considering e.g.\ $S(z)=z+\frac {z^2} {2}+\frac {z^4} {4}$.
Namely, in that case we have $v_2(h_{8m})=4m$, since in
the proof below (see \eqref{eq:Hest2b}) all terms except the
one with $j_8=m$ (and all other $j_i$'s equal to zero) have
a higher $2$-divisibility.

\medskip
(2) For $l\ge3$, the bound in \eqref{eq:vpH22} is actually better
than the bound in \eqref{eq:vpH2} with $p=2$. Namely, the difference
between the former and the latter is
$\fl{{n} /{2^{l+1}}}$. For $l=2$, the bound in \eqref{eq:vpH22} is
just barely better than the bound in \eqref{eq:vpH2}, the difference
being $\fl{n/4}-2\fl{n/8}$, which equals $0$ or $1$, depending on
whether $n\equiv0,1,2,3$~(mod~$8$) or not. 
For $l=1$, the bound in \eqref{eq:vpH22} is worse
than the bound in \eqref{eq:vpH2} with $p=2$. 
\end{remarks}

\begin{proof}[Proof of Theorem~{\em \ref{thm:pdiv22}}]
If $l=1$, then we apply Corollary~\ref{thm:pdiv10} with $p=2$ and
$l=2$. Since, for this choice of $p$ and $l$, Condition~\eqref{eq:pS10} is
equivalent to \eqref{eq:pS22} with $l=1$, we immediately get 
the corresponding bound in \eqref{eq:vpH22} from \eqref{eq:vpH10}.

We postpone the discussion of the case where $l=2$, and, for the moment,
focus 
on the generic case where $l\ge3$. Proceeding in the same manner
as in the proof of Theorem~\ref{thm:pdiv2},
we let $n=a2^l+r$ with $0\le r<2^l$. We then must bound
the $2$-adic valuation of the summands in the series
\begin{equation} \label{eq:H-H5} 
\frac {h_{a2^l+r}} {(a2^l+r)!}=
\sum_{i_0+\sum_{i=2^l+1}^\infty ij_i=a2^l+r}
\widetilde H_{i_0}
\prod _{i=2^l+1} ^{\infty}
\frac {\la_i^{j_i}}
{i^{j_i}\,j_i!}
\end{equation}
from below,
where we know that $\widetilde H_{i_0}\in \Z_2$ for all $i_0$, 
and where the $\la_i$'s
are defined as in the statement of Theorem~\ref{thm:pdiv1} with
$p=2$. Again, since
we are assuming that all $s_i$'s are in $\Z_2$ (instead of
$\Q_2$ as in Theorem~\ref{thm:pdiv1}), we have $\la_i\in \Z_2$
for all $i$. For the $2$-adic valuation of the summand in the above
sum, we then find that
{\allowdisplaybreaks
\begin{align}
\notag
v_2&\left(\widetilde H_{i_0}
\prod _{i=2^l+1} ^{\infty}
\frac {\la_i^{j_i}}
{i^{j_i}\,j_i!}\right)
\ge -\sum_{i=2^l+1}^\infty j_i\cdot v_2(i)
-\sum_{i=2^l+1}^\infty\sum_{s\ge1}\fl{\frac {j_i} {2^s}}\\
\notag
&\kern1cm
\ge
-\sum_{i=2^l+1}^{2^{l+1}-1} j_i(l-2)-j_{3\cdot 2^{l-1}}
-\sum_{i=2^{l+1}}^\infty j_i(l-1)
-\sum_{i=2^{l+1}}^\infty j_i(v_2(i)-l+1)\\
\notag
&\kern5cm
-\underset{i\ne 3\cdot 2^{l-1}}
{\sum_{i=2^l+1}^{2^{l+1}-1}}\sum_{s\ge1}\fl{\frac {j_i} {2^s}}
-\sum_{s\ge1}\fl{\frac {j_{3\cdot 2^{l-1}}} {2^s}}
-\sum_{i=2^{l+1}}^\infty\sum_{s\ge1}\fl{\frac {j_i} {2^s}}\\
\notag
&\kern1cm
\ge
-\sum_{i=2^l+1}^\infty j_i(l-1)
+\sum_{i=2^l+1}^{2^{l+1}-1} j_i
-j_{3\cdot 2^{l-1}}
-\sum_{i=2^{l+1}}^\infty j_i(v_2(i)-l+1)
\\
\notag
&\kern3cm
-\underset{i\ne 3\cdot 2^{l-1}}
{\sum_{i=2^l+1}^{2^{l+1}-1}}{j_i} 
-\fl{\frac {j_{3\cdot 2^{l-1}}} {2}}
-\sum_{s\ge1}\fl{\frac {3\cdot 2^{l-1}j_{3\cdot 2^{l-1}}} {2^{s+l+1}}}
-\sum_{s\ge1}\sum_{i=2^{l+1}}^\infty\fl{\frac {ij_i} {2^{s+l+1}}}\\
\notag
&\kern1cm
\ge
-(l-1)\Bigg(j_{3\cdot 2^{l-1}}
+{\underset{i\ne 3\cdot 2^{l-1}}
{\sum_{i=2^l+1}^\infty} \fl{\frac {i} {2^l}}j_i
}\Bigg)
+
{\sum_{i=2^{l+1}}^\infty} j_i(l-1)\left(\fl{\frac {i} {2^l}}-1\right)
\\
\notag
&\kern5cm
-\sum_{i=2^{l+1}}^\infty j_i(v_2(i)-l+1)
-\fl{\frac {j_{3\cdot 2^{l-1}}} {2}}
\\
\notag
&\kern5cm
-\sum_{s\ge1}\Bigg\lfloor
{\frac {3\cdot 2^{l-1}j_{3\cdot 2^{l-1}}} {2^{s+l+1}}}
+\sum_{i=2^{l+1}}^\infty\frac {ij_i}
{2^{s+l+1}}\Bigg\rfloor\\
\notag
&\kern1cm
\ge
-(l-1)\left(j_{3\cdot 2^{l-1}}
+\fl{\underset{i\ne 3\cdot 2^{l-1}}
{\sum_{i=2^l+1}^\infty} {\frac {i} {2^l}}j_i
}\right)
-\sum_{s\ge1}\fl{\frac {a} {2^{s+1}}}\\
&\kern3cm
-\fl{\frac {j_{3\cdot 2^{l-1}}} {2}}
+\sum_{i=2^{l+1}}^\infty j_i\left(\fl{\frac {i} {2^l}}(l-1)-v_2(i)\right)
,
\label{eq:Hest2}
\end{align}}%
where we have used
the dependence of the summation indices of the sum
in \eqref{eq:H-H5} in the last line. It is easy to see that
$$
2^{x-l}(l-1)\ge x
$$
for all real $x$ with $x\ge l+1$ and $l\ge3$. 
Hence, with $x=\fl{\log_2 i}$, we have
$$
\fl{\frac {i} {2^l}}(l-1)-v_2(i)\ge
2^{x-l}(l-1)-x\ge0,
$$
so that the last term in the last line of \eqref{eq:Hest2} is 
non-negative. Next, we apply Lemma~\ref{lem:32j} with
$j=j_{3\cdot 2^{l-1}}$ and $x$ equal to the sum 
${\sum_{i=2^l+1,\,i\ne 3\cdot 2^{l-1}}^\infty} {{i} }j_i/2^l$.
This leads to the estimation
\begin{align} 
\notag
v_2\left(\widetilde H_{i_0}
\prod _{i=2^l+1} ^{\infty}
\frac {\la_i^{j_i}}
{i^{j_i}\,j_i!}\right)
&\ge
-(l-1)\fl{
\sum_{i=2^l+1}^\infty \frac {i} {2^l}j_i
}
-\sum_{s\ge1}\fl{\frac {a} {2^{s+1}}}
+\frac {l-3} {2}\fl{\frac {j_{3\cdot 2^{l-1}}} {2}}
\\
&\ge
-(l-1)a
-\sum_{s\ge1}\fl{\frac {a} {2^{s+1}}}
,
\label{eq:Hest2a}
\end{align}
which is exactly the bound \eqref{eq:vpH22} with $l\ge3$ and
$n=a2^l+r$, where $0\le r<2^l$.

For the remaining case where $l=2$, a weakened version of the
estimation \eqref{eq:Hest2} suffices. Here, we set $n=4a+r$, with
$0\le r<4$. Then
\begin{align}
\notag
v_2&\left(\widetilde H_{i_0}
\prod _{i=5} ^{\infty}
\frac {\la_i^{j_i}}
{i^{j_i}\,j_i!}\right)
\ge -\sum_{i=5}^\infty j_i\cdot v_2(i)
-\sum_{i=5}^\infty\sum_{s\ge1}\fl{\frac {j_i} {2^s}}\\
\notag
&\kern1cm
\ge
-j_{6}
-\sum_{i=8}^\infty j_i\cdot v_2(i)
-\sum_{i=5}^7\sum_{s\ge1}\fl{\frac {j_i} {2^s}}
-\sum_{i=8}^\infty\sum_{s\ge1}\fl{\frac {j_i} {2^s}}\\
\notag
&\kern1cm
\ge
-\sum_{i=5}^7 j_i
-\sum_{i=8}^\infty j_i\left(\fl{\frac {i} {4}}+\fl{\frac {i} {8}}\right)
+\sum_{i=8}^\infty j_i\left(3\fl{\frac {i} {8}}-v_2(i)\right)
\\
\notag
&\kern4cm
-\sum_{i=5}^7\fl{\frac {j_i} {2}}
-\sum_{i=5}^7\sum_{s\ge1}\fl{\frac {ij_{i}} {2^{s+3}}}
-\sum_{s\ge1}\sum_{i=8}^\infty\fl{\frac {ij_i} {2^{s+3}}}\\
\notag
&\kern1cm
\ge
-\Bigg\lfloor
\sum_{i=5}^\infty\frac {ij_i}
{4}\Bigg\rfloor
-\Bigg\lfloor
\sum_{i=5}^\infty\frac {ij_i}
{8}\Bigg\rfloor
+{\sum_{i=8}^\infty} j_i\left(3\fl{\frac {i} {8}}-v_2(i)\right)
-\sum_{s\ge1}\Bigg\lfloor
\sum_{i=5}^\infty\frac {ij_i}
{2^{s+3}}\Bigg\rfloor\\
&\kern1cm
\ge
-\fl{\frac {n} {4}}
-\fl{\frac {n} {8}}
-\sum_{s\ge1}\fl{\frac {n} {2^{s+3}}}
+{\sum_{i=8}^\infty} j_i\left(3\fl{\frac {i} {8}}-v_2(i)\right)
,
\label{eq:Hest2b}
\end{align}
where we have used
the dependence of the summation indices of the sum
in \eqref{eq:H-H5} with $l=2$ in the last line. 
To finish up, one applies the inequality
$$
3\fl{\frac {i} {8}}-v_2(i)\ge
3\cdot 2^{x-3}-x\ge0,\quad \text{for }i\ge8,
$$
where $x=\fl{\log_2 i}$. If this is used in \eqref{eq:Hest2b},
then one obtains the bound in \eqref{eq:vpH22}
with $l=2$.
This completes the proof of the theorem.
\end{proof}

\begin{lemma} \label{lem:32j}
For all integers $j$ and real numbers $x,$ we have
$$
j+\fl x\le 
\fl{\frac {3} {2}j+x}-\frac {1} {2}\fl{\frac {j} {2}}.
$$
\end{lemma}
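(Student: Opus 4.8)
The plan is to isolate the integer part of $\tfrac32 j$ so that it can be pulled out of the floor on the right-hand side, reducing everything to a one-line comparison. First I would write $\ep = j - 2\fl{j/2}\in\{0,1\}$ for the parity of $j$, which yields the identity $\tfrac32 j = j + \fl{j/2} + \tfrac{\ep}{2}$, in which the quantity $j+\fl{j/2}$ is an integer. Using the elementary rule $\fl{N+y}=N+\fl{y}$ for any integer $N$ and real $y$, this gives
\[
\fl{\tfrac32 j + x} = j + \fl{j/2} + \fl{x + \tfrac{\ep}{2}}.
\]

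Substituting this into the claimed inequality, the two copies of $j$ cancel and the assertion collapses to
\[
\fl{x} \le \tfrac12 \fl{j/2} + \fl{x + \tfrac{\ep}{2}}.
\]
This I would then settle by two trivial observations: since $\ep/2\ge 0$, monotonicity of the floor gives $\fl{x+\ep/2}\ge\fl{x}$; and $\fl{j/2}\ge 0$, so the term $\tfrac12\fl{j/2}$ is non-negative. Adding these two facts produces the required inequality, completing the argument.

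There is essentially no hard step here; the only point that genuinely needs care is the sign of $\tfrac12\fl{j/2}$, and this is where I expect the one real subtlety to sit. That term is non-negative precisely when $j\ge 0$ — which is the regime in which the lemma is invoked, where $j=j_{3\cdot 2^{l-1}}$ is a summation index and hence a non-negative integer — so the non-negativity of $j$ is exactly what the final step consumes. (Indeed the inequality can fail for negative $j$, e.g.\ $j=-2$ and $x=0$, confirming that this hypothesis is the operative one.) As an alternative, one could instead run the same computation through an explicit split into the cases $j$ even and $j$ odd and verify each directly; the uniform decomposition above merely packages both parities at once and makes the role of $\fl{j/2}\ge 0$ transparent.
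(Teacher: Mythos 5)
Your proof is correct and is essentially the paper's own argument in unified form: the paper splits into the cases $j=2J$ and $j=2J+1$ and in each one pulls the integer part out of the floor, which is precisely your decomposition $\tfrac{3}{2}j=j+\fl{j/2}+\tfrac{\ep}{2}$ with $\ep\in\{0,1\}$, and both arguments reduce in the end to the non-negativity of $\fl{j/2}$ (resp.\ of $J$). Your observation that the inequality fails for negative $j$, so that the lemma silently consumes the non-negativity of the summation index $j_{3\cdot 2^{l-1}}$, applies verbatim to the paper's proof as well, since its two concluding inequalities likewise require $J\ge 0$.
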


\begin{proof}If $j$ is even, say $j=2J$, then 
$$
\fl{\frac {3} {2}j+x}-\frac {1} {2}\fl{\frac {j} {2}}
=\fl{3J+x}-\frac {1} {2}J\ge 2J+\fl x=j+\fl x.
$$
On the
other hand, if $j$ is odd, say $j=2J+1$, then
$$
\fl{\frac {3} {2}j+x}-\frac {1} {2}\fl{\frac {j} {2}}
=
\fl{3J+\frac {3} {2}+x}-\frac {1} {2}J
\ge 2J+1+\fl{x}=j+\fl x,
$$
as desired.
\end{proof}

As announced in Remark~(4) after Theorem~\ref{thm:pdiv1},
there is a sharp dividing line concerning $p$-divisibility
of the coefficients of the series $H(z)$ depending on whether
$s_{1}\equiv s_{p}$~mod~$p\Z_p$ or not.

\begin{corollary} \label{cor:divline}
For a prime number $p,$
let\/ $S(z)=\sum_{n\ge1}\frac {s_n} {n}z^n$ be a formal power series
with $s_n\in\Q_p$ for all $n,$ and let 
$H(z)=\sum_{n\ge0}\frac {h_n} {n!}z^n$ be the exponential of $S(z)$.
If $s_{1}\equiv s_{p}$~{\em mod}~$p\Z_p,$ then, for $p\ge5,$ we have
\begin{equation} \label{eq:divl1}
v_p(h_n)\ge 
\sum_{s \ge 1}\left(\fl{\frac {n} {p^{s}}}
-\fl{\frac {n} {2p^{s}}}\right),
\end{equation}
for $p=3,$ we have
\begin{equation} \label{eq:divl3}
v_3(h_n)\ge 
\sum_{s \ge 1}\left(\fl{\frac {n} {3^{s}}}
-\fl{\frac {n} {2\cdot3^{s}}}\right)-\fl{\frac {n} {18}},
\end{equation}
while, for $p=2,$ we have
\begin{equation} \label{eq:divl2}
v_2(h_n)\ge \fl{\frac {n} {2}}-\fl{\frac {n} {4}}.
\end{equation}
On the other hand, 
if \hbox{$s_{1}\not\equiv s_{p}$~{\em mod}~$p\Z_p,$} then for each $N$
there exists some $n>N$ such that $h_n$ is not divisible by $p$.
\end{corollary}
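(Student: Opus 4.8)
The plan is to deduce the corollary from the four results it packages together, all specialised to $l=1$, by checking that the single congruence $s_1\equiv s_p$ mod $p\Z_p$ is exactly the toggle governing their $l=1$ hypotheses. The first step I would carry out is the elementary translation between this congruence and a truncated Dwork condition. Comparing coefficients, the coefficient of $z^m$ in $S(z^p)-pS(z)$ is $-ps_m/m$ if $p\nmid m$ and $p(s_{m/p}-s_m)/m$ if $p\mid m$; hence for $1\le m<p$ it lies in $p\Z_p$ (since $s_m\in\Z_p$ and $1/m\in\Z_p$), while for $m=p$ it equals $s_1-s_p$. Consequently, once all $s_n\in\Z_p$, the assertion $s_1\equiv s_p$ mod $p\Z_p$ is equivalent to $S(z^p)-pS(z)=pJ(z)+O(z^{p+1})$ with $J\in\Z_p[z]$, that is, to \eqref{eq:pS2}, \eqref{eq:pS31}, and \eqref{eq:pS22} read with $l=1$. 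This is also where one sees that integrality of the $s_n$ is what renders the positive bounds (whose right-hand sides are non-negative) meaningful.

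For the direction $s_1\equiv s_p$ I would then invoke the relevant theorem with $l=1$ and let the bound simplify. For $p\ge5$ the pair $(p,1)$ avoids the excluded $(3,1)$, so Theorem~\ref{thm:pdiv2} applies; in \eqref{eq:vpH2} the term $(l-1)\fl{n/p^l}$ drops and $\sum_{s\ge l}$ becomes $\sum_{s\ge1}$, yielding \eqref{eq:divl1}. The excluded prime $p=3$ is handled by the dedicated Theorem~\ref{thm:pdiv31}, whose conclusion \eqref{eq:vpH31} is \eqref{eq:divl3} verbatim. For $p=2$, Theorem~\ref{thm:pdiv22} with $l=1$ gives its first branch $\fl{n/2}-\fl{n/4}$, which is \eqref{eq:divl2}. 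Thus the three prime ranges are covered by the three theorems, with no work beyond the collapse of \eqref{eq:vpH2}.

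For the direction $s_1\not\equiv s_p$ I would use the $l=1$, $m=0$ specialisation of Theorem~\ref{thm:pdiv1} recorded in Remark~(4). Here \eqref{eq:pS1} still holds, the degree-$p$ coefficient $s_1-s_p$ being precisely the displayed term, and \eqref{eq:sdiff} with $l=1,m=0$ is satisfied automatically for $s_n\in\Z_p$ since a short case check shows its right-hand side is at most $0$ for every $i>p$. The conclusion in Remark~(4) then gives $h_{ap}\equiv(s_1-s_p)^a$ mod $p\Z_p$ for all $a\ge0$. As $s_1-s_p\in\Z_p\setminus p\Z_p$ is a unit and $\Z_p/p\Z_p$ is a field, $(s_1-s_p)^a\not\equiv0$, so $p\nmid h_{ap}$; picking $a$ with $ap>N$ supplies the required $n>N$.

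The step I expect to be the real obstacle is not any individual estimate --- those are inherited wholesale from the cited theorems --- but the bookkeeping confirming that the same elementary congruence plays both roles: that $s_1\equiv s_p$ mod $p\Z_p$ is simultaneously the $l=1$ hypothesis of each of Theorems~\ref{thm:pdiv2}, \ref{thm:pdiv31}, and \ref{thm:pdiv22}, and that its negation, together with the (here vacuous) verification of \eqref{eq:sdiff}, is exactly what feeds the Remark~(4) argument. One must also be careful to record that both directions rest on $s_n\in\Z_p$: the positive bounds because their right-hand sides are non-negative, and the negative argument because it needs $s_1-s_p$ to be a $p$-adic unit, so that $s_1\not\equiv s_p$ mod $p\Z_p$ carries its intended force.
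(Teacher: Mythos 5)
Your proposal is correct and takes essentially the same route as the paper, whose (implicit) proof is exactly this assembly: the positive direction from Theorems~\ref{thm:pdiv2}, \ref{thm:pdiv31}, and \ref{thm:pdiv22} with $l=1$, after translating $s_1\equiv s_p$~mod~$p\Z_p$ into the truncated conditions \eqref{eq:pS2}, \eqref{eq:pS31}, \eqref{eq:pS22}, and the negative direction from Remark~(4) after Theorem~\ref{thm:pdiv1}, i.e., the case $l=1$, $m=0$. Your side remark that the hypothesis must be read as $s_n\in\Z_p$ (rather than the stated $s_n\in\Q_p$) for the quoted theorems to apply is also accurate and worth recording.
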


If we assume that Condition~\eqref{eq:pS1d} is even satisfied up to
the coefficient of $z^{p^{2p^l-1}}$, then Theorem~\ref{thm:pdiv2} may
be further improved.

\begin{theorem} \label{thm:pdiv3}
For a prime number $p\ge3,$
let\/ $S(z)=\sum_{n\ge1}\frac {s_n} {n}z^n$ be a formal power series
with $s_n\in\Z_p$ for all $n,$ and let 
$H(z)=\sum_{n\ge0}\frac {h_n} {n!}z^n$ be the exponential of $S(z)$.
Given a positive integer $l$
such that $(p,l)\ne(3,1),$ we assume that
\begin{equation} \label{eq:pS3}
S(z^{p})-pS(z)=pJ(z)
+O\left(z^{2p^{l}}\right) 
\end{equation}
with $J(z)\in \Z_p[z]$.
Then
\begin{equation} \label{eq:vpH3}
v_p(h_n)\ge \sum_{s \ge 1}\fl{\frac {n} {p^{s}}}
-(l-1)\cl{\frac {n} {2p^l}}
-\sum_{s \ge l}\fl{\frac {n} {2p^{s}}} 
\end{equation}
for all $n$.
\end{theorem}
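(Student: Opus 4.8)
The plan is to follow the proof of Theorem~\ref{thm:pdiv2} almost verbatim, the single decisive difference being an improved lower bound on $v_p(\la_i)$ for the indices $i$ in the window $p^l<i<2p^l$, made available by the stronger hypothesis \eqref{eq:pS3}. As there, I write $n=ap^l+r$ with $0\le r<p^l$, split off a reference series $\widetilde S(z)$ satisfying \eqref{eq:pS1d} (so that its exponential has coefficients $\widetilde H_{i_0}\in\Z_p$ by Lemma~\ref{lem:dwork}), and expand $h_n/n!$ as in \eqref{eq:H-H4}, a sum over tuples $(i_0,(j_i)_{i>p^l})$ constrained by $i_0+\sum_{i>p^l}ij_i=n$, with the $\la_i\in\Z_p$ defined as in Theorem~\ref{thm:pdiv1}.

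First I would record the arithmetic content of \eqref{eq:pS3}. Comparing the coefficient of $z^{pk}$ on both sides shows that \eqref{eq:pS1d} holding up to (and including) the coefficient of $z^{2p^l-1}$ is equivalent to the congruences $v_p(s_k-s_{pk})\ge v_p(k)+1$ for all $k<2p^{l-1}$. From these I would deduce the crucial estimate
\[
v_p(\la_i)\ge v_p(i)\qquad\text{for all }i\text{ with }p^l<i<2p^l.
\]
Indeed, if $p\nmid i$, then $\la_i=s_i\in\Z_p$ while $v_p(i)=0$; and if $p\mid i$, then $i/p<2p^{l-1}<p^l$, so that $\la_i=s_i-s_{i/p}$, whence the congruence with $k=i/p$ yields exactly $v_p(\la_i)\ge v_p(i/p)+1=v_p(i)$. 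Consequently each factor $\la_i^{j_i}/i^{j_i}$ with $p^l<i<2p^l$ has non-negative valuation, whereas in Theorem~\ref{thm:pdiv2} these indices each contributed the penalty $-(l-1)j_i$.

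With this input I would re-run the estimation \eqref{eq:Hest}. The terms with $p^l<i<2p^l$ now cost only their factorial part $-v_p(j_i!)$, and the penalty $-(l-1)\sum_{i>p^l}j_i$ is replaced by $-(l-1)\sum_{i\ge 2p^l}j_i$. Since $i\ge 2p^l$ throughout that range, the constraint $\sum_{i\ge2p^l}ij_i\le n$ forces $\sum_{i\ge2p^l}j_i\le\fl{n/2p^l}\le\cl{n/2p^l}$, which produces the improved middle term $-(l-1)\cl{n/2p^l}$ of \eqref{eq:vpH3}. The remaining ingredients are untouched: the excess $v_p(i)-(l-1)\le i/2p^l$ for $i\ge 2p^l$ (with the same exceptional correction $\chi(p=3)\,j_{p^{l+1}}/2$ at $i=p^{l+1}$, where $p^{l+1}\ge 2p^l$), the factorial contributions, and Legendre's formula for $v_p\big((ap^l+r)!\big)$ collapse precisely as in Theorem~\ref{thm:pdiv2} into $\sum_{s\ge1}\fl{n/p^s}-\sum_{s\ge l}\fl{n/2p^s}$. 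The hypothesis $(p,l)\ne(3,1)$ enters, exactly as before, to guarantee $(l-1)j_{p^{l+1}}-\chi(p=3)\,j_{p^{l+1}}/2\ge0$.

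The main obstacle is entirely in the bookkeeping of this last step: I must check that the factorial terms $-v_p(j_i!)$ for $p^l<i<2p^l$, which survive even though their arithmetic part $\la_i^{j_i}/i^{j_i}$ now has non-negative valuation, are absorbed correctly together with the excess-and-factorial contributions of the indices $i\ge2p^l$, so that the floor/ceiling combination lands on exactly $-(l-1)\cl{n/2p^l}-\sum_{s\ge l}\fl{n/2p^s}$ rather than on a nearby but slightly weaker expression.
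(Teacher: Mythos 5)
Your proposal is correct, but it implements the key idea differently from the paper. The paper's proof simply moves the truncation point of the reference series: since \eqref{eq:pS3} gives Condition~\eqref{eq:pS1d} through the coefficient of $z^{2p^l-1}$, one may choose $\widetilde S(z)$ to agree with $S(z)$ up to degree $2p^l-1$, so that the analogue of \eqref{eq:H-H4} involves correction indices $i\ge 2p^l$ only; the window $p^l<i<2p^l$ never enters, and the estimation of \eqref{eq:Hest} is rerun with every sum starting at $2p^l$ (this is the single display \eqref{eq:Hest3} constituting the paper's proof). You instead keep the cut of Theorem~\ref{thm:pdiv2} at $p^l$ and neutralise the window by the divisibility $v_p(\la_i)\ge v_p(i)$, which you correctly extract from \eqref{eq:pS3} in the form $v_p(s_k-s_{pk})\ge v_p(k)+1$ for $k<2p^{l-1}$ (your reduction to $e=1$ in the definition of $\la_i$ uses $2p^{l-1}<p^l$, i.e.\ $p\ge3$, which is available). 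Your route costs slightly more, since the window factorials $v_p(j_i!)$ survive, but the obstacle you flag does resolve: by $\sum_{s\ge1}\fl{j_i/p^s}\le j_i/(p-1)\le ij_i/(2p^l)$ for $p^l<i<2p^l$ and $p\ge3$, these terms merge with the tail terms $-j_i\,i/(2p^l)-\sum_{s\ge1}\fl{ij_i/(2p^{s+l})}$ into at worst $-\fl{n/(2p^l)}-\sum_{s\ge l+1}\fl{n/(2p^s)}$, after one application of $-\al>-\fl{\al}-1$ and integrality of the valuation --- exactly the mechanism already used in \eqref{eq:Hest}. Two details deserve care. First, your derivation of the middle term via integrality, $\sum_{i\ge 2p^l}j_i\le\fl{n/(2p^l)}\le\cl{n/(2p^l)}$, is clean and in fact tidier than the paper's passage through $\cl{a/2}$. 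Second, the slack available at $i=p^{l+1}$ in that bound is $(l-1)\left(\frac p2-1\right)j_{p^{l+1}}\ge\frac{l-1}2\,j_{p^{l+1}}$, not $(l-1)j_{p^{l+1}}$ as you assert by analogy with Theorem~\ref{thm:pdiv2} (where the comparison is against $i/p^l$ rather than $i/(2p^l)$); the required non-negativity $\frac{l-1}2\,j_{p^{l+1}}-\chi(p=3)\frac{j_{p^{l+1}}}2\ge0$ nevertheless holds precisely because $(p,l)\ne(3,1)$, so your conclusion is unaffected.
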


\begin{remarks}
(1) The ``gain" effected by the stronger conditions in
  Theorem~\ref{thm:pdiv3} over those in Theorem~\ref{thm:pdiv2} is measured
by the difference between the bounds in \eqref{eq:vpH3} and
\eqref{eq:vpH2}, namely
$$
(l-1)\left(\fl{\frac {n} {p^l}}-\cl{\frac {n} {2p^l}}\right).
$$

\medskip
(2) There is no need to have a $p=2$ version of
  Theorem~\ref{thm:pdiv3} since this is given by
  Corollary~\ref{thm:pdiv10} with $p=2$ and $l$ replaced by $l+1$.

\medskip
(3) If one would work through the proof below, with $p=3$ and
$l=1$, then one would not obtain any improvement over Theorem~\ref{thm:pdiv31}.
\end{remarks}

\begin{proof}[Proof of Theorem~{\em \ref{thm:pdiv3}}]
The proof runs along the lines of the proof of Theorem~\ref{thm:pdiv2}.
The relevant computation is:
\begin{align}
\notag
v_p&\left(\widetilde H_{i_0}
\prod _{i=2p^l} ^{\infty}
\frac {\la_i^{j_i}}
{i^{j_i}\,j_i!}\right)
\ge -\sum_{i=2p^l}^\infty j_i\cdot v_p(i)
-\sum_{i=2p^l}^\infty\sum_{s\ge1}\fl{\frac {j_i} {p^s}}\\
\notag
&\kern1cm
\ge
-\sum_{i=2p^l}^\infty j_i\left(l-1+\frac {i} {2p^l}\right)
-\chi(p=3)\frac {j_{p^{l+1}}}2 
-\sum_{s\ge1}\sum_{i=2p^l}^\infty\fl{\frac {ij_i} {2p^{s+l}}}\\
\notag
&\kern1cm
>
-\sum_{i=2p^l}^\infty j_i\frac {i} {2p^l}(l-1)
+(l-1)\left(\frac {p} {2}-1\right)j_{p^{l+1}}
-\Bigg\lfloor\sum_{i=2p^l}^\infty j_i\frac {i} {2p^l}\Bigg\rfloor-1\\
\notag
&\kern5cm
-\sum_{s\ge1}\Bigg\lfloor\sum_{i=2p^l}^\infty\frac {ij_i}
{2p^{s+l}}\Bigg\rfloor
-\chi(p=3)\frac {j_{p^{l+1}}}2\\
\notag
&\kern1cm
>
-(l-1)\cl{\frac {a} {2}}
-\fl{\frac {a} {2}}
-\sum_{s\ge1}\fl{\frac {a} {2p^{s}}}\\
&\kern5cm
+(l-1)\frac {j_{p^{l+1}}} {2}
-\chi(p=3)\frac {j_{p^{l+1}}}2
-1.
\label{eq:Hest3}
\end{align}
Again, it is seen that this implies the bound in \eqref{eq:vpH3}.
\end{proof}

\section{Counting subgroups in finitely generated groups}
\label{sec:fingen}

\noindent Let $p$ be a prime number.
The main result of this section, Proposition~\ref{Prop:FinGenIndexp},
in particular shows that the number $s_p(\Gamma)$ of subgroups of
index~$p$ in a finitely generated group $\Gamma$ is always congruent
to $0$ or $1$ modulo~$p$. As we are going to see in the next section, 
in combination with Corollary~\ref{cor:divline}, 
this leads to a sharp dividing line between
the two cases concerning the $p$-divisibility of the homomorphism
numbers $h_n(\Gamma)$; see Theorem~\ref{thm:fingen}. As illustration of
Proposition~\ref{Prop:FinGenIndexp}, we exhibit two classes of
finite groups $G$ for which $s_p(G)\equiv0$~(mod~$p$) (and, more
generally, even $s_{p^m}(G)\equiv0$~(mod~$p$) for each prime power
$p^m$ dividing the order of~$G$) in Corollaries~\ref{cor:pm1}
and \ref{cor:pm2}.

\medskip
Let $\Gamma$ be a finitely generated group, let $p$
be a prime number, and let $m$ be a positive integer. We write the
primary decomposition of the finitely generated Abelian group
$\bar{\Gamma} = \Gamma/[\Gamma, \Gamma]$ as 
\[
\bar{\Gamma} \cong \bigoplus_{q\in\mathbb{P}}\,
\bigoplus_{\mu=1}^{M_q} C_{q^{\mu}}^{e^{(q)}_{\mu}(\Gamma)}\,
\oplus\, C_\infty^{\bar{r}_\infty(\Gamma)}, 
\]
where $\mathbb{P}$ denotes the set of positive rational primes. For
$q\in\mathbb{P}$, the number 
\[
\bar{r}_q(\Gamma) := e_1^{(q)}(\Gamma) + e_2^{(q)}(\Gamma) + \cdots +
e^{(q)}_{M_q}(\Gamma), 
\]
i.e., the rank (minimal number of generators) of the $q$-part of
$\bar{\Gamma}$, is called the \emph{$q$-rank} of the finitely
generated Abelian group $\bar{\Gamma}$. 

For a positive integer $k$, denote by $n_{k}(\Gamma)$ the number of
normal subgroups of index $k$ in $\Gamma$. The following observation
will be useful. 

\begin{lemma}
\label{Lem:NormalCount}
For a prime number $p,$ we have
\begin{equation}
\label{Eq:NormalpCount}
n_p(\Gamma) = \frac{p^{\bar{r}_p(\Gamma) + \bar{r}_\infty(\Gamma)}-1}{p-1}.
\end{equation}
In particular, $n_p(\Gamma)$ either equals zero, or is congruent to
$1$ modulo~$p,$ depending on whether or not $\bar{r}_p(\Gamma) +
\bar{r}_\infty(\Gamma)$ vanishes. 
\end{lemma}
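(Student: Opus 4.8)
The plan is to reduce the count of index-$p$ normal subgroups to a linear-algebra computation over $\mathbb F_p$. First I would observe that every normal subgroup $N\trianglelefteq\Gamma$ of index $p$ has $\Gamma/N\cong C_p$, which is abelian, so $[\Gamma,\Gamma]\subseteq N$. Conversely, since $\bar\Gamma=\Gamma/[\Gamma,\Gamma]$ is abelian, every subgroup of index $p$ in $\bar\Gamma$ is automatically normal, and its preimage in $\Gamma$ is a normal subgroup of index $p$. These two passages are mutually inverse, so $n_p(\Gamma)$ equals the number of subgroups of index $p$ in the finitely generated abelian group $\bar\Gamma$.

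Next I would pass to the quotient $\bar\Gamma/p\bar\Gamma$. Any subgroup $H\le\bar\Gamma$ of index $p$ satisfies $p\bar\Gamma\subseteq H$, since $\bar\Gamma/H\cong C_p$ is annihilated by $p$; hence $H\mapsto H/p\bar\Gamma$ is a bijection between the subgroups of index $p$ in $\bar\Gamma$ and the hyperplanes (codimension-one subspaces) of the $\mathbb F_p$-vector space $\bar\Gamma/p\bar\Gamma$. Using the primary decomposition of $\bar\Gamma$ displayed above, I would compute its dimension: each infinite cyclic factor contributes one dimension, each cyclic $p$-power factor $C_{p^\mu}$ contributes one (as $C_{p^\mu}/pC_{p^\mu}\cong C_p$), and each factor $C_{q^\mu}$ with $q\ne p$ contributes nothing, because multiplication by $p$ is invertible on a finite $q$-group. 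Thus $\dim_{\mathbb F_p}(\bar\Gamma/p\bar\Gamma)=\bar r_\infty(\Gamma)+\sum_{\mu}e^{(p)}_\mu(\Gamma)=\bar r_p(\Gamma)+\bar r_\infty(\Gamma)$; write $d$ for this number.

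The number of hyperplanes in $\mathbb F_p^d$ equals, by duality, the number of one-dimensional subspaces: a hyperplane is the kernel of a nonzero linear functional, and two functionals share a kernel exactly when they are scalar multiples, so the count is $(p^d-1)/(p-1)$. This is precisely the right-hand side of \eqref{Eq:NormalpCount}. For the final assertion, if $d=0$ then $(p^d-1)/(p-1)=0$, so $\Gamma$ has no normal subgroup of index $p$; if $d\ge 1$, then $(p^d-1)/(p-1)=1+p+\cdots+p^{d-1}\equiv 1\pmod p$.

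The only point that warrants genuine care is the first bijection—concretely, the inclusion of the commutator subgroup in every index-$p$ normal subgroup and the resulting identification of $n_p(\Gamma)$ with an invariant of $\bar\Gamma$ alone. Once that is in place, the remaining steps are routine: reduction mod $p$, a dimension count read off from the primary decomposition, and the standard hyperplane count. I therefore do not anticipate any substantial obstacle beyond bookkeeping.
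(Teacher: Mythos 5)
Your proof is correct, but there is nothing in the paper to compare it against: the paper disposes of Lemma~\ref{Lem:NormalCount} entirely by citing \cite[Lemma~2]{MueFor}, so your argument supplies a self-contained proof where the paper gives only a reference. The route you take is the natural one (and almost certainly the one in the cited source): index-$p$ normal subgroups of $\Gamma$ biject with index-$p$ subgroups of $\bar{\Gamma}$ because any such subgroup contains $[\Gamma,\Gamma]$; these in turn biject with hyperplanes of the $\mathbb{F}_p$-vector space $\bar{\Gamma}/p\bar{\Gamma}$ because any index-$p$ subgroup of $\bar{\Gamma}$ contains $p\bar{\Gamma}$; the dimension of that space is $\bar{r}_p(\Gamma)+\bar{r}_\infty(\Gamma)$, since the $q$-primary components with $q\neq p$ vanish under reduction mod~$p$ (multiplication by $p$ is invertible on them), while each $C_{p^\mu}$ and each $C_\infty$ contributes one dimension; and the number of hyperplanes in $\mathbb{F}_p^d$ is $(p^d-1)/(p-1)$ by duality with lines. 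Each step is sound, including the degenerate case $d=0$, where the formula correctly returns $0$, and the congruence $1+p+\dots+p^{d-1}\equiv 1 \pmod{p}$ for $d\geq1$. What your write-up buys is that this part of Section~\ref{sec:fingen} becomes independent of the external reference \cite{MueFor}, at the cost of roughly a paragraph of linear algebra over $\mathbb{F}_p$.
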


\begin{proof}
This is \cite[Lemma~2]{MueFor}.
\end{proof}

With Lemma~\ref{Lem:NormalCount} at our disposal, we can now show the
following. 

\begin{proposition}
\label{Prop:FinGenIndexp}
Let\/ $\Gamma$ be a finitely generated group, and let $p$ be a prime number.
The number $s_p(\Gamma)$ of index $p$ subgroups in $\Gamma$ 
does not attain the values $2, 3,
  \ldots, p-1$ modulo $p$. More precisely, if\/ $\Gamma$ contains a
  subgroup of index $p,$ we have 
\begin{equation}
\label{Eq:FinGenIndpCong}
s_p(\Gamma) \equiv \begin{cases} 0~(\mathrm{mod}~p),& 
\text{if\/ }\bar{r}_p(\Gamma) +
  \bar{r}_\infty(\Gamma) = 0,\\[1mm] 
1~(\mathrm{mod}~p),& 
\text{if\/ }\bar{r}_p(\Gamma) + \bar{r}_\infty(\Gamma)
>0.\end{cases}
\end{equation}
Also, if\/ $\Gamma$ does not contain a normal subgroup of
  index $p^m$ for some $m\geq1,$ then 
$s_{p^m}(\Gamma) \equiv 0~(\mathrm{mod}~p)$. 
\end{proposition}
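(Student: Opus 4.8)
The plan is to reduce every assertion of the proposition to a single orbit-counting observation linking the subgroup count $s_{p^m}(\Gamma)$ to the normal-subgroup count $n_{p^m}(\Gamma)$, and then to feed in Lemma~\ref{Lem:NormalCount}. Concretely, I would let $\Gamma$ act by conjugation on the set of all subgroups of index $p^m$; this is a well-defined action since conjugation preserves the index. It partitions the index-$p^m$ subgroups into conjugacy classes, where the normal subgroups are precisely the singleton classes and thus account for exactly $n_{p^m}(\Gamma)$ of them.

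The crucial step is to control the sizes of the non-singleton classes. For a subgroup $H$ of index $p^m$, the chain $H\le N_\Gamma(H)\le\Gamma$ together with multiplicativity of the index shows that $[\Gamma:N_\Gamma(H)]$ divides $[\Gamma:H]=p^m$; hence the conjugacy class of $H$, whose cardinality equals $[\Gamma:N_\Gamma(H)]$ by orbit--stabilizer, is a power of $p$. If $H$ is not normal this index exceeds $1$, so the class size is divisible by $p$. Summing over classes, the non-normal index-$p^m$ subgroups contribute a multiple of $p$, whence
\[
s_{p^m}(\Gamma)\equiv n_{p^m}(\Gamma)\pmod{p}.
\]

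From this congruence the proposition follows quickly. Taking $m=1$ and invoking Lemma~\ref{Lem:NormalCount}, the number $n_p(\Gamma)$ is either $0$ (when $\bar{r}_p(\Gamma)+\bar{r}_\infty(\Gamma)=0$) or congruent to $1$ modulo $p$ (otherwise); transporting this through the displayed congruence yields at once the dichotomy in \eqref{Eq:FinGenIndpCong} and the unconditional fact that $s_p(\Gamma)$ never lies in $\{2,3,\dots,p-1\}$ modulo $p$. For the final assertion, if $\Gamma$ has no normal subgroup of index $p^m$ then $n_{p^m}(\Gamma)=0$, and the congruence immediately gives $s_{p^m}(\Gamma)\equiv0\pmod{p}$.

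The one point requiring care — the main obstacle, though a mild one — is that $\Gamma$ is merely finitely generated and possibly infinite, so I must justify that $[\Gamma:N_\Gamma(H)]$ is finite and divides $p^m$. This rests on the standard fact that for subgroups $H\le K\le\Gamma$ with $[\Gamma:H]<\infty$ one has $[\Gamma:H]=[\Gamma:K]\,[K:H]$, valid whether or not $\Gamma$ is finite; applied with $K=N_\Gamma(H)$ it delivers exactly the $p$-power bound on the class size on which the whole argument turns.
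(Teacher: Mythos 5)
Your proposal is correct and follows essentially the same route as the paper's own proof: you let $\Gamma$ act by conjugation on the (finite, by finite generation) set of index-$p^m$ subgroups, observe via orbit--stabilizer and $H\le N_\Gamma(H)$ that every non-normal subgroup lies in an orbit of size a positive power of $p$, deduce $s_{p^m}(\Gamma)\equiv n_{p^m}(\Gamma)\pmod{p}$, and conclude by Lemma~\ref{Lem:NormalCount}. Your closing remark on the multiplicativity of indices for possibly infinite $\Gamma$ is a sound justification of a point the paper treats implicitly.
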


\begin{proof}
Let $\Gamma$ be a finitely generated group, and let $p^m$ be a
  non-trivial prime power. Let $\mathcal{C}_{p^m}(\Gamma)$ be the
  complex of subgroups of index $p^m$ in $\Gamma$. Since $\Gamma$ is
  assumed to be finitely generated, $\mathcal{C}_{p^m}(\Gamma)$ is
  finite. The group $\Gamma$ acts from the right
  on this finite complex  by conjugation, i.e., via 
\[
\Delta\cdot \gamma := \gamma^{-1} \Delta
\gamma,\qquad\gamma\in\Gamma,\, \Delta\in \mathcal{C}_{p^m}(\Gamma). 
\] 
Consider a group
$\Delta\in\mathcal{C}_{p^m}(\Gamma)$. The orbit of $\Delta$ under this
$\Gamma$-action (i.e., its conjugacy class) has size $(\Gamma:
N_\Gamma(\Delta))$. Since $\Delta\subseteq N_\Gamma(\Delta)$,
this size must be a power of $p$, say $p^e$, with $0\le e\le m$. 
If $e=0$, then $N_\Gamma(\Delta))=\Gamma$,
that is, $\Delta$ is normal and the size of its orbit is $1$. 
Otherwise the orbit size is divisible
by $p$. Thus,
\begin{equation} 
\label{Eq:s/n}
s_{p^m}(\Gamma) \equiv n_{p^m}(\Gamma)~(\mathrm{mod}~p).
\end{equation}
The claims now follow by combining \eqref{Eq:s/n} with 
Lemma~\ref{Lem:NormalCount}. 
%
%
\end{proof}

\begin{remark}
Similarly, one can show that
the number $s_{p^2}(\Gamma)$ does not attain the values
  $2, 3, \ldots, p-1$ modulo $p$. More precisely, if\/ $\Gamma$ contains
  a subgroup of index $p^2,$ then 
\begin{equation}
\label{Eq:FinGenIndp^2Cong}
s_{p^2}(\Gamma) \equiv \begin{cases} 0~(\mathrm{mod}~p),& 
\text{if\/ }\bar{r}_p(\Gamma) +
  \bar{r}_\infty(\Gamma)=0, \mbox{ or }e_1^{(p)} = \bar{r}_p(\Gamma) =
  1 \mbox{ and } \, \bar{r}_\infty(\Gamma)=0,\\[1mm] 
1~(\mathrm{mod}~p),& 
\mbox{otherwise.} \end{cases}
\end{equation}
\end{remark}

We give two illustrations for Proposition~\ref{Prop:FinGenIndexp} 
from finite group theory.

\begin{corollary} \label{cor:pm1}
Let\/ $G$ be a finite non-Abelian simple group, and let $p^m$ be a
non-trivial prime power dividing the order of\/ $G$. Then $s_{p^m}(G)
\equiv 0~(\mathrm{mod}~p)$. 
\end{corollary}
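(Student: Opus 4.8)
The plan is to reduce the statement directly to the final assertion of Proposition~\ref{Prop:FinGenIndexp}, which guarantees that $s_{p^m}(\Gamma)\equiv 0~(\mathrm{mod}~p)$ whenever the finitely generated group $\Gamma$ has no normal subgroup of index $p^m$. Since a finite group is in particular finitely generated, it therefore suffices to verify that a finite non-Abelian simple group $G$ admits no normal subgroup of index $p^m$.

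First I would invoke simplicity: the only normal subgroups of $G$ are $\{1\}$ and $G$ itself, so any normal subgroup $N$ of index $p^m$ must be one of these two. The choice $N=G$ yields index $1=p^m$, which is impossible because $p^m$ is a non-trivial prime power, so that $m\ge1$ and $p^m\ge p\ge2$. The choice $N=\{1\}$ yields $|G|=p^m$, and the remaining task is to exclude the possibility that $G$ is a $p$-group of order $p^m$.

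The key point is then the standard fact that a non-trivial finite $p$-group has a non-trivial centre. Were $|G|=p^m$, the centre $Z(G)$ would be a non-trivial normal subgroup, so by simplicity $Z(G)=G$, forcing $G$ to be Abelian and contradicting the hypothesis that $G$ is non-Abelian. Hence $|G|\ne p^m$, and combining the two cases we conclude that $G$ has no normal subgroup of index $p^m$. The assertion $s_{p^m}(G)\equiv 0~(\mathrm{mod}~p)$ now follows at once from Proposition~\ref{Prop:FinGenIndexp}.

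I expect no serious obstacle here; the whole argument is a short deduction, and the only substantive input beyond Proposition~\ref{Prop:FinGenIndexp} is the centre argument ruling out $|G|=p^m$. One should merely note that the hypothesis that $p^m$ divides $|G|$ serves only to make the count $s_{p^m}(G)$ potentially non-zero, and hence the statement of interest; the congruence itself holds irrespective of this divisibility.
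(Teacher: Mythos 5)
Your proposal is correct and follows essentially the same route as the paper: both reduce to the final assertion of Proposition~\ref{Prop:FinGenIndexp} by showing that $G$ has no normal subgroup of index $p^m$, ruling out $|G|=p^m$ via the non-trivial centre of a non-trivial finite $p$-group (the paper's condensed phrase ``otherwise $1<\zeta_1(G)<G$'' is exactly your centre argument). The only difference is expository: you spell out the case distinction $N=G$ versus $N=\{1\}$ that the paper compresses into the inequality $1<p^m<|G|$.
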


\begin{proof}
We have $1 < p^m < \vert G\vert$, since otherwise $1<\zeta_1(G) < G$,
a contradiction. Hence, $G$ does not contain a normal subgroup of
index $p^m$, and our assertion  follows from 
the last assertion of Proposition~\ref{Prop:FinGenIndexp}.  
\end{proof}

\begin{corollary} \label{cor:pm2}
Let\/ $p^m$ be a prime power with $p^m\geq3,$ let $n$ be positive
integer, and suppose that $p^m \mid n!$. Then $s_{p^m}(S_n) \equiv
0~(\mathrm{mod}~p)$. 
\end{corollary}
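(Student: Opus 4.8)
The plan is to deduce the result from the final assertion of Proposition~\ref{Prop:FinGenIndexp}, which guarantees that $s_{p^m}(\Gamma) \equiv 0 \pmod p$ whenever the finitely generated group $\Gamma$ has no normal subgroup of index $p^m$. Since $S_n$ is finite, hence finitely generated, it will suffice to verify that, under the stated hypotheses $p^m \geq 3$ and $p^m \mid n!$, the symmetric group $S_n$ possesses no normal subgroup of index $p^m$. I would first note that these hypotheses force $n \geq 3$: otherwise $n! \leq 2$, leaving no prime power $p^m \geq 3$ dividing $n!$.

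The main step is then to list the normal subgroups of $S_n$ together with their indices. For $n \geq 5$, the simplicity of the alternating group $A_n$ shows that the only normal subgroups are $\{e\}$, $A_n$, and $S_n$, of indices $n!$, $2$, and $1$, respectively. For the two remaining cases I would check directly that $S_3$ has normal subgroups of indices $1$, $2$, and $6$, while $S_4$ has normal subgroups of indices $1$, $2$, $6$, and $24$, the index $6$ arising from the Klein four-group. In every case with $n \geq 3$, the indices exceeding $2$ are $6$, $24$, or $n!$ with $n \geq 5$; each of these is divisible by both $2$ and $3$, and hence is not a prime power. Consequently, the only prime-power indices of normal subgroups of $S_n$ are $1$ and $2$.

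Since $p^m \geq 3$, it will then follow that $S_n$ has no normal subgroup of index $p^m$, whence the desired congruence $s_{p^m}(S_n) \equiv 0 \pmod p$ is obtained from Proposition~\ref{Prop:FinGenIndexp}. I anticipate no serious obstacle in this argument; the only point requiring care is the completeness of the enumeration of normal subgroups in the small cases $n = 3$ and $n = 4$, where $S_n$ is not covered by the simplicity argument. It is worth emphasizing that the hypothesis $p^m \geq 3$ is indispensable: the index-$2$ subgroup $A_n$ is normal, so the argument would collapse for $p^m = 2$, and indeed $s_2(S_3) = 1$ is odd.
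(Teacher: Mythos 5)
Your proof is correct and follows essentially the same route as the paper: both verify that $S_n$ has no normal subgroup of index $p^m\geq 3$ (using simplicity of $A_n$ for $n\geq 5$) and then invoke the last assertion of Proposition~\ref{Prop:FinGenIndexp}. The only difference is cosmetic: the paper disposes of $n=3,4$ ``by inspection,'' whereas you handle these cases uniformly by enumerating the normal subgroups of $S_3$ and $S_4$ and noting that none of the indices $1,2,6,24$ is a prime power $\geq 3$, which is a clean way to fold them into the same argument.
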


\begin{proof}
For $n=1,2$ the assertion is empty (thus holds trivially), for $n=3,4$
it holds by inspection. For $n\geq5$, the only non-trivial normal
subgroup of $S_n$ is $A_n$, which has index $2$; in particular, $S_n$
does not contain a normal subgroup of index $p^m$. Our claim follows
again from the last assertion of Proposition~\ref{Prop:FinGenIndexp}. 
\end{proof}

\section{$p$-Divisibility of homomorphism numbers 
of finitely generated groups}
\label{sec:appl}

\noindent In this section, we present results on the
$p$-divisibility of homomorphism numbers 
for various classes of finitely generated groups~$\Gamma$.
Our first result, Theorem~\ref{thm:fingen}, in particular says that
there exists a sharp dividing line for the $p$-divisibility
of the sequence $\big(h_n(\Gamma)\big)_{n\ge0}$: either there is
``no increase" in $p$-divisibility, or there is a considerable $p$-part
in $h_n(\Gamma)$ which tends to infinity as $n\rightarrow\infty$.
In this context,
Proposition~\ref{Prop:FinGenIndexp} shows that the dividing line
is given by $s_p(\Gamma)\equiv 1$~(mod~$p$) as opposed to
$s_p(\Gamma)\equiv 0$~(mod~$p$). This dichotomy is indeed
what is needed in the proof of Theorem~\ref{thm:fingen} 
in order to be able to apply an appropriate 
abstract result (namely Corollary~\ref{cor:divline}).

Finite $p$-groups $G$ always have the property
that $s_p(G)\equiv 1$~(mod~$p$). Thus, unbounded growth of $p$-divisibility
must be expected for the homomorphism numbers of such groups.
Theorem~\ref{thm:pgroup} shows that a
growth estimate for $h_n(G)$ can be given which is even better than
the one in Theorem~\ref{thm:fingen}. Indeed,
the bound given there is the same as the one in
\eqref{Eq:CpVal} for the cyclic group~$C_p$. 
Moreover, if $p$ is odd and $G$ is not cyclic, a further improvement
is possible, as Theorem~\ref{thm:Kul} shows.

As an example of a non-nilpotent group of mixed order, in
Theorem~\ref{cor:D} we consider the dihedral group of order $2m$,
which turns out to have the property that the $2$-divisibility of
its homomorphism numbers can be bounded below by bounds which are
at least as good as the one for $C_2$ given in \eqref{eq:hnC2},
and a better bound if $4\mid m$.

In sharp contrast to these results, we show that a finite non-Abelian simple
group $G$ satisfies $h_n(G)\equiv 1$~(mod~$p$) for
all~$n$; see Corollary~\ref{Cor:FiniteSimplep}. 

\begin{theorem} \label{thm:fingen}
Let\/ $\Gamma$ be a finitely generated group, and let $p$ be a prime number.
If $\Gamma$ contains a subgroup of index~$p$ and
$\bar{r}_p(\Gamma) + \bar{r}_\infty(\Gamma)
>0,$ then, for $p\ge5,$ we have
\begin{equation} \label{eq:hdivl1}
v_p\big(h_n(\Gamma)\big)\ge 
\sum_{s \ge 1}\left(\fl{\frac {n} {p^{s}}}
-\fl{\frac {n} {2p^{s}}}\right),
\end{equation}
while, for $p=3,$ we have
\begin{equation} \label{eq:hdivl3}
v_3\big(h_n(\Gamma)\big)\ge 
\sum_{s \ge 1}\left(\fl{\frac {n} {3^{s}}}
-\fl{\frac {n} {2\cdot3^{s}}}\right)-\fl{\frac {n} {18}},
\end{equation}
and, for $p=2,$ we have
\begin{equation} \label{eq:hdivl2}
v_2\big(h_n(\Gamma)\big)\ge \fl{\frac {n} {2}}-\fl{\frac {n} {4}},
\end{equation}
for all $n$.
In all other cases, for each $N$
there exists some $n>N$ such that $h_n(\Gamma)$ is not divisible by $p$. 
\end{theorem}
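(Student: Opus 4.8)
The plan is to identify the abstract framework of Section~\ref{sec:pdiv1} with the present group-theoretic setting via the generating-function identity \eqref{eq:HS}, and then to invoke the dividing-line result of Corollary~\ref{cor:divline} in tandem with the arithmetic information furnished by Proposition~\ref{Prop:FinGenIndexp}. I would begin by putting $s_n = s_n(\Gamma)$ and $h_n = h_n(\Gamma)$, so that the hypotheses of Corollary~\ref{cor:divline} are satisfied (with coefficients lying in fact in $\Z_p$, since subgroup and homomorphism counts are non-negative integers). The whole argument then reduces to deciding, in each situation, whether $s_1 \equiv s_p \pmod{p\Z_p}$.

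The first point to record is that $s_1(\Gamma) = 1$, as the only subgroup of index~$1$ is $\Gamma$ itself; hence $s_1 \equiv s_p \pmod{p\Z_p}$ holds precisely when $s_p(\Gamma) \equiv 1 \pmod p$. Next I would read off from Proposition~\ref{Prop:FinGenIndexp} exactly when this congruence is valid: when $\Gamma$ possesses a subgroup of index~$p$, the proposition gives $s_p(\Gamma) \equiv 1 \pmod p$ exactly if $\bar{r}_p(\Gamma) + \bar{r}_\infty(\Gamma) > 0$, and $s_p(\Gamma) \equiv 0 \pmod p$ otherwise; and if $\Gamma$ has no subgroup of index~$p$, then $s_p(\Gamma) = 0$, so trivially $s_p(\Gamma) \equiv 0 \pmod p$.

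It then remains to feed this into the two branches of Corollary~\ref{cor:divline}. In the case where $\Gamma$ has a subgroup of index~$p$ and $\bar{r}_p(\Gamma) + \bar{r}_\infty(\Gamma) > 0$, one has $s_1 \equiv s_p \pmod{p\Z_p}$, so the bounds \eqref{eq:divl1}--\eqref{eq:divl2} transcribe directly into \eqref{eq:hdivl1}--\eqref{eq:hdivl2}. In every remaining situation---either $\Gamma$ has a subgroup of index~$p$ but $\bar{r}_p(\Gamma) + \bar{r}_\infty(\Gamma) = 0$, or $\Gamma$ has no subgroup of index~$p$---one finds $s_p(\Gamma) \equiv 0 \not\equiv 1 \equiv s_1 \pmod p$, so $s_1 \not\equiv s_p \pmod{p\Z_p}$, whence the second branch of Corollary~\ref{cor:divline} produces, for each $N$, some $n > N$ with $p \nmid h_n(\Gamma)$.

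Since all the substantive work is already contained in the cited results, I anticipate no genuine obstacle; the only step demanding care is the clean handling of the case distinction---in particular, recognising that the clause ``in all other cases'' in the statement is exactly the union of the two sub-cases yielding $s_p(\Gamma) \equiv 0 \pmod p$, and that the trivial case $s_p(\Gamma)=0$ (no index-$p$ subgroup) must be folded in alongside the case $\bar{r}_p(\Gamma) + \bar{r}_\infty(\Gamma) = 0$.
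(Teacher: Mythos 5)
Your proposal is correct and follows exactly the paper's own route: the paper proves this theorem in one line by combining Proposition~\ref{Prop:FinGenIndexp}, Relation~\eqref{eq:HS}, and Corollary~\ref{cor:divline}, which is precisely the argument you spell out. Your added care in noting $s_1(\Gamma)=1$ and in folding the case of no index-$p$ subgroup (where $s_p(\Gamma)=0$) into the $s_p(\Gamma)\equiv 0\pmod p$ branch is exactly the bookkeeping the paper leaves implicit.
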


\begin{proof}
This follows by combining Proposition~\ref{Prop:FinGenIndexp} with 
Relation \eqref{eq:HS}  
and Corollary~\ref{cor:divline}.
\end{proof}

\begin{theorem} \label{thm:pgroup}
Let $p$ be a prime number, and let $G$ be a non-trivial finite $p$-group.
Then
\begin{equation} \label{eq:hpdiv}
v_p\big(h_n(G)\big)\ge 
\fl{\frac {n} {p}}
-\fl{\frac {n} {p^{2}}}
\end{equation}
for all $n$.
\end{theorem}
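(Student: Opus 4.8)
The plan is to obtain the bound as a direct application of Corollary~\ref{thm:pdiv10} with $l=2$, whose conclusion \eqref{eq:vpH10} reads $v_p(h_n)\ge\fl{n/p}-\fl{n/p^2}$ --- precisely the asserted bound \eqref{eq:hpdiv}. Thus the entire task reduces to verifying that the hypotheses of that corollary hold for the series $S(z)=\sum_{n\ge1}\frac{s_n(G)}{n}z^n$ attached to $G$ through Relation~\eqref{eq:HS}.

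First I would record two elementary facts about $s_n(G)$. Since $G$ is a finite $p$-group, every subgroup has $p$-power index, so $s_n(G)=0$ unless $n$ is a power of $p$; and each $s_n(G)$ is a non-negative integer, hence lies in $\Z_p$. The latter supplies the integrality hypothesis $s_n\in\Z_p$ of Corollary~\ref{thm:pdiv10}, while the former makes the truncated Dwork condition \eqref{eq:pS10} almost vacuous.

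Next I would check \eqref{eq:pS10} with $l=2$, i.e.\ that every coefficient of $z^N$ in $S(z^p)-pS(z)$ with $N<p^2$ lies in $p\Z_p$. For $p\nmid N$ this coefficient is $-p\,s_N(G)/N$, which vanishes for $1<N<p^2$ (such $N$ is not a $p$-power) and equals $-p$ for $N=1$ (using $s_1(G)=1$); in either case it lies in $p\Z_p$. For $N=pk$ with $1\le k<p$ the coefficient equals $\big(s_k(G)-s_{pk}(G)\big)/k$, which is $0$ when $1<k<p$ (neither index is a $p$-power) and equals $s_1(G)-s_p(G)=1-s_p(G)$ when $k=1$. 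Hence the sole genuine requirement is $s_p(G)\equiv1\pmod p$.

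The one substantive input is therefore the congruence $s_p(G)\equiv1\pmod p$ for a non-trivial finite $p$-group, and this is where I expect the only real content to sit. A non-trivial $p$-group possesses a maximal subgroup of index $p$, so $G$ contains a subgroup of index $p$; moreover its abelianisation $G/[G,G]$ is a non-trivial abelian $p$-group, whence $\bar{r}_p(G)\ge1$ (and $\bar{r}_\infty(G)=0$, as $G$ is finite). Thus Proposition~\ref{Prop:FinGenIndexp} applies in the case $\bar{r}_p(G)+\bar{r}_\infty(G)>0$ and yields $s_p(G)\equiv1\pmod p$. With this, Condition~\eqref{eq:pS10} holds for $l=2$, and Corollary~\ref{thm:pdiv10} delivers the bound \eqref{eq:hpdiv}, completing the proof.
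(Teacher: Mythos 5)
Your proof is correct, and its engine is the same as the paper's: both reduce the claim to Corollary~\ref{thm:pdiv10} with $l=2$, for which the only non-trivial coefficient condition in \eqref{eq:pS10} is $s_p(G)\equiv s_1(G)=1 \pmod{p\Z_p}$ (your coefficient-by-coefficient check of $S(z^p)-pS(z)$ up to degree $p^2-1$ is exactly right). The one place you diverge is in sourcing this congruence: the paper invokes Frobenius' generalisation of Sylow's third theorem, which gives $s_{p^i}(G)\equiv 1\pmod p$ for \emph{all} $i$ with $p^i\le|G|$, whereas you deduce just the $i=1$ case from Proposition~\ref{Prop:FinGenIndexp}, via the observation that a non-trivial finite $p$-group has a subgroup of index $p$ and a non-trivial abelianisation, so $\bar{r}_p(G)+\bar{r}_\infty(G)>0$. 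Your route is more self-contained (it relies on a result proved in the paper rather than an external classical theorem) and isolates precisely the minimal input needed for $l=2$; the paper's appeal to Frobenius is stronger than necessary here, but that strength is of the kind one would want for pushing to higher truncation levels (compare Theorem~\ref{thm:Kul}, where Kulakoff's refinement $s_{p^i}\equiv 1+p \pmod{p^2}$ is what permits taking $l=3$). Either way, the argument is complete and correct.
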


\begin{proof}
By Frobenius' generalisation of Sylow's third theorem in \cite{Frob1},  
we have $s_{p^i}(G)\equiv1$~(mod~$p$) for all $i$ such that
$p^i\le \vert G\vert$, while $s_n(G)=0$ for all $n$ different
from a power of $p$. The claim now
follows from Relation \eqref{eq:HS} plus
Corollary~\ref{thm:pdiv10} with $l=2$.
\end{proof}

\begin{theorem} \label{thm:Kul}
Let $p$ be a prime number, and
let $G$ be a non-cyclic $p$-group of odd order.
Then
\begin{equation} \label{eq:vpKul}
v_p\big(h_n(G)\big)\ge
\fl{\frac {n} {p}}
+\fl{\frac {n} {p^2}}
-2\fl{\frac {n} {p^3}}
\end{equation}
for all $n$.
\end{theorem}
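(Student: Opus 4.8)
The bound \eqref{eq:vpKul} is exactly the quantity $e_p(n;3,0)=\sum_{s=1}^{2}\fl{\frac n{p^s}}-(3-0-1)\fl{\frac n{p^3}}$ that Corollary~\ref{thm:pdiv1u} produces with $l=3$ and $m=0$, so the plan is to verify the hypotheses of that corollary for the series $S(z)=\sum_{n\ge1}\frac{s_n(G)}{n}z^n$ associated with $G$ through \eqref{eq:HS}. Since $G$ is a $p$-group, $s_n(G)=0$ unless $n$ is a power of $p$, each $s_{p^e}(G)$ is a non-negative integer and hence lies in $\Z_p$, and $s_1(G)=1$; thus $S(z)=\sum_{e\ge0}\frac{s_{p^e}(G)}{p^e}z^{p^e}$ has precisely the shape demanded by Corollary~\ref{thm:pdiv1u}. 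Because all $s_{p^e}(G)$ lie in $\Z_p$, the congruence \eqref{eq:p^mu} with $m=0$ is vacuous, and the tail hypothesis \eqref{eq:sdiffu} is automatic: the relevant range of $e$ is empty for $p\ge7$ and forces $e=4$ when $p\in\{3,5\}$, in which case its right-hand side is negative while its left-hand side is the valuation of an element of $\Z_p$ and so is non-negative.

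It remains to check \eqref{eq:pS1u}, and I would first reduce this to two explicit congruences between subgroup numbers. Computing the low-order coefficients of $S(z^p)-pS(z)$ directly from $S(z)=\sum_{e\ge0}\frac{s_{p^e}(G)}{p^e}z^{p^e}$, one finds that the coefficient of $z$ equals $-p\,s_1(G)$, which lies in $p\Z_p$ automatically, that the coefficients of $z^p$ and $z^{p^2}$ equal $s_1(G)-s_p(G)$ and $\frac1p\big(s_p(G)-s_{p^2}(G)\big)$, and that the coefficient of $z^{p^3}$ is the structural term already isolated in \eqref{eq:pS1u}. Hence \eqref{eq:pS1u} with $l=3$ holds as soon as
\[
s_1(G)\equiv s_p(G)\pmod{p}\qquad\text{and}\qquad s_p(G)\equiv s_{p^2}(G)\pmod{p^2}.
\]

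The first congruence is immediate: Frobenius' generalisation of Sylow's third theorem gives $s_{p^i}(G)\equiv1\pmod p$ whenever $p^i\le\vert G\vert$, so in particular $s_p(G)\equiv1=s_1(G)$. The second congruence is the crux of the matter, and is exactly where the hypotheses ``$p$ odd'' and ``$G$ non-cyclic'' enter: by Kulakoff's theorem, for a non-cyclic $p$-group of odd order $p^n$ the number of subgroups of order $p^k$ is congruent to $1+p$ modulo $p^2$ for every $k$ with $0<k<n$; equivalently $s_{p^j}(G)\equiv1+p\pmod{p^2}$ for $0<j<n$. As long as $\vert G\vert\ge p^3$, both $j=1$ and $j=2$ lie in this range, whence $s_p(G)\equiv s_{p^2}(G)\equiv1+p\pmod{p^2}$. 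With both congruences in hand, Corollary~\ref{thm:pdiv1u} delivers \eqref{eq:vpKul}.

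The main obstacle is thus the passage from mod-$p$ to mod-$p^2$ information: Frobenius alone yields only $s_p(G)\equiv s_{p^2}(G)\pmod p$, which would merely reproduce the $l=2$ estimate of Theorem~\ref{thm:pgroup}, and it is precisely Kulakoff's sharpening---valid only for odd $p$---that upgrades this to the $l=3$ bound. The one configuration the method does not reach is $\vert G\vert=p^2$, that is $G\cong C_p\times C_p$, where $s_{p^2}(G)=1$ corresponds to the trivial subgroup and so falls outside Kulakoff's range $0<j<n$; this borderline group would have to be analysed on its own, and I would dispose of it separately before invoking Kulakoff for all remaining non-cyclic $G$ of order at least $p^3$.
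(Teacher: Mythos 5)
You follow essentially the same route as the paper: there, too, the proof consists of Kulakoff's theorem combined with the $l=3$ instance of the truncated Dwork criterion (the paper invokes Corollary~\ref{thm:pdiv10}, you invoke Corollary~\ref{thm:pdiv1u} with $m=0$; for a series supported on powers of $p$ these are interchangeable here), and your explicit translation of Condition~\eqref{eq:pS1u} into the two congruences $s_1(G)\equiv s_p(G)\pmod{p\Z_p}$ and $s_p(G)\equiv s_{p^2}(G)\pmod{p^2\Z_p}$ is precisely what the paper's one-line appeal to Kulakoff implicitly relies on. For $\vert G\vert\ge p^3$ your argument is complete and correct, and indeed more careful than the paper's, which never acknowledges that Kulakoff's range $1\le i\le m-1$ fails to reach $i=2$ when $\vert G\vert=p^2$.

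The gap is your final sentence: the case $G\cong C_p\times C_p$ cannot be ``disposed of separately,'' because it is a counterexample to the statement. For this group $s_p(G)=p+1$ and $s_{p^2}(G)=1$, so the congruence $s_p(G)\equiv s_{p^2}(G)\pmod{p^2\Z_p}$ genuinely fails --- and so does the conclusion: by the paper's own Theorem~\ref{thm:rang}(ii), applied with $r=2$ and $a_1=a_2=1$ (so $A_1=1$), the bound $v_p\big(h_n(C_p\times C_p)\big)\ge\fl{\frac{n}{p}}$ is \emph{tight} for every $n$ divisible by $p^2$ whenever $p$ is odd; hence $v_p\big(h_{p^2}(C_p\times C_p)\big)=p$, whereas \eqref{eq:vpKul} at $n=p^2$ asserts a valuation of at least $p+1$. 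Concretely, for $p=3$ one computes $h_9(C_3\times C_3)=238113=3^3\cdot 8819$, so $v_3(h_9)=3<4$. The theorem as stated is therefore false for $C_p\times C_p$ and must be supplemented by the hypothesis $\vert G\vert\ge p^3$ (equivalently, $G\not\cong C_p\times C_p$); once that hypothesis is added, your proof --- and, after the same correction, the paper's --- goes through.
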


\begin{proof}
Let $p^m$ be the order of $G$.
By \cite{KulaAA} (see also \cite[Theorem~1.52]{HallAA}),
we know that $s_{p^i}\equiv 1+p$~(mod~$p^2$) for $1\le i\le m-1$.
Thus, remembering again \eqref{eq:HS}, we see that the assumptions
of Corollary~\ref{thm:pdiv10} with $l=3$ are satisfied.
The claim now follows from \eqref{eq:vpH10}.
\end{proof}

Next we discuss the family of finite dihedral groups.

\begin{proposition}
\label{Prop:FiniteDihedral}
Let $D_m$
be the dihedral group of order $2m,$ and let $d$ be a divisor of
$2m$. Then
\[
s_{d}(G) = \begin{cases} d,& \text{if\/ $d$ is odd,}\\[1mm]
1+d,& \text{if\/ $d$ is even}.\end{cases}
\]
\end{proposition}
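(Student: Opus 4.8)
The plan is to reduce the statement to a complete description of the subgroup lattice of $D_m=\langle r,s\mid r^m=s^2=1,\ srs^{-1}=r^{-1}\rangle$ and then to sort the subgroups by their index. Write $R=\langle r\rangle\cong C_m$ for the rotation subgroup, of index $2$ in $D_m$. First I would record the standard dichotomy: a subgroup $H\le D_m$ either lies inside $R$ or contains a reflection. In the first case $H$ is cyclic, and since $R\cong C_m$ there is, for each divisor $e\mid m$, exactly one such subgroup $H=\langle r^{m/e}\rangle$, of order $e$ and hence of index $2m/e$. In the second case $H\cap R$ is a subgroup of the cyclic group $R$, so $H\cap R=\langle r^{m/k}\rangle$ for some $k\mid m$; since $H$ also meets the reflection coset $Rs$, the reflections of $H$ form a single coset of $H\cap R$, whence $\vert H\vert=2k$ and the index of $H$ equals $m/k$.

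The main technical step in this classification is to count, for a fixed $k\mid m$, the dihedral subgroups of order $2k$. Here I would argue that such an $H$ is \emph{determined by its set of reflections}, which is one coset of $\langle r^{m/k}\rangle$ inside the $m$-element set of reflections $\{r^is:0\le i<m\}$. Since $\langle r^{m/k}\rangle$ has $k$ elements, these reflections split into exactly $m/k$ cosets, producing exactly the $m/k$ subgroups $\langle r^{m/k},\,r^js\rangle$ with $0\le j<m/k$, each of index $m/k$. This coset bookkeeping is the step I expect to require the most care: one must verify both that each such $H$ genuinely has order $2k$ and that distinct cosets yield distinct subgroups.

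With the classification in hand, the final step is to collect, for a given divisor $d\mid 2m$, all subgroups of index $d$. The cyclic rotation subgroups all have even index $2m/e=2(m/e)$, so they contribute to $s_d(D_m)$ precisely when $d$ is even, and then contribute exactly one subgroup, namely $\langle r^{d/2}\rangle$ of order $e=2m/d$ (note $2m/d\mid m$ holds exactly when $d$ is even). The dihedral subgroups have index $m/k$, so they contribute whenever $d=m/k$ is solvable in $k\mid m$, that is, when $d\mid m$, and they then contribute $m/k=d$ subgroups. For odd $d\mid 2m$ one automatically has $d\mid m$ (as $\gcd(d,2)=1$), the rotation part drops out, and one obtains $s_d(D_m)=d$. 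For even $d$ the rotation part contributes $1$ and, using $d\mid m$, the dihedral part contributes $d$, giving $s_d(D_m)=1+d$. The delicate point on which one should check the bookkeeping is exactly this last divisibility input in the even case, since it is the hypothesis $d\mid m$ (rather than merely $d\mid 2m$) that makes the dihedral contribution equal to $d$.
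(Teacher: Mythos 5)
Your subgroup classification is correct, and it is the algebraic counterpart of the paper's geometric proof: your dichotomy (subgroups inside the rotation subgroup $R=\langle r\rangle$ versus subgroups containing a reflection) and your count of the dihedral subgroups of order $2k$ as the $m/k$ cosets of $\langle r^{m/k}\rangle$ inside the set of reflections is precisely the paper's encoding of a dihedral subgroup by reflection lines spaced $d\pi/m$ apart. Up to that point the two arguments coincide, differing only in language.

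The genuine gap sits exactly at the step you flag as delicate and then pass over: for even $d$ you write ``using $d\mid m$'', but this is not available. The hypothesis is only $d\mid 2m$, and for even $d$ this does \emph{not} imply $d\mid m$ (take $d=2$, $m=3$, or $d=4$, $m=6$). In that regime your own classification, carried through honestly, gives the answer $s_d(D_m)=1$: the rotation subgroup $\langle r^{d/2}\rangle$ is there, but dihedral subgroups of index $d$ exist only when $d\mid m$, so the contribution of $d$ disappears. Concretely, $D_3\cong S_3$ has exactly one subgroup of index $2$, not $1+2=3$, and $s_4(D_6)=1$, not $5$. So the gap cannot be repaired: the displayed formula is correct only when $d$ is odd or when $d$ is even with $d\mid m$, and what your argument actually proves is this corrected statement. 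You should also be aware that the paper's proof has the same blind spot: reflections in lines at angles $\alpha\pi/m$ and $(\alpha+d)\pi/m$ generate a subgroup of index $\gcd(d,m)$, which equals $d$ only if $d\mid m$. (In the later application, Theorem~\ref{cor:D}, the values invoked for $4\mid m$ and $8\mid m$ do lie in the regime $d\mid m$, where your argument is complete.)
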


\begin{proof}
We regard $D_m$ as a real reflection group acting on $\mathbb R^2$
by rotations through angles that are multiples of $\pi/m$ 
and reflections in lines that have angles with the $x$-axis that
are also multiples of $\pi/m$. 
Subgroups of $D_m$ are either cyclic or themselves dihedral groups.
For each even divisor $d$ of $2m$ there exists exactly one cyclic
subgroup of $D_m$ of index~$d$. On the other hand, a dihedral group
can be given in terms of two reflections which generate it.
It is not difficult to see that a unique way to encode a dihedral
subgroup of $D_m$ of index $d$ is in terms of two reflections with respect to
the lines $l_1$ and $l_2$, the first having an angle of $\al\pi/m$
with the $x$-axis, the second having an angle of $(\al+d)\pi/m$ with
the $x$-axis, and $0\le \al<d$. There are $d$ possibilities to choose
$\al$. This implies the assertion of the proposition.
\end{proof}

\begin{theorem} \label{cor:D}
Let $D_m$ be the dihedral group of order~$2m$. If $p$ is a prime
number different from~$2,$ then $h_n(D_m)$ is infinitely often not
divisible by~$p,$ while we have
\begin{equation} \label{eq:D}
v_2\big(h_n(D_m)\big)\ge 
\begin{cases} 
\fl{\frac {n} {2}},&\text{if $4\mid m$,}\\[1mm]
\fl{\frac {n} {2}}-\fl{\frac {n} {4}},
&\text{if $4\nmid m$.}
\end{cases}
\end{equation}
\end{theorem}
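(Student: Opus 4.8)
The plan is to apply the abstract $2$- and $p$-divisibility results of Sections~\ref{sec:pdiv1} and~\ref{sec:pdiv2} to the sequence $s_n=s_n(D_m)$, which by relation~\eqref{eq:HS} governs $h_n=h_n(D_m)$. By Proposition~\ref{Prop:FiniteDihedral} we have $s_n(D_m)=0$ unless $n\mid 2m$, while $s_1=1$ and, for $d\mid 2m$, either $s_d=d$ or $s_d=1+d$ according as $d$ is odd or even.

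For an odd prime $p$ I would invoke the dividing-line criterion directly. Since $s_1=1$ while $s_p\in\{0,p\}$ (according as $p\nmid m$ or $p\mid m$), one has $s_1\not\equiv s_p\pmod{p\Z_p}$, and so the final assertion of Corollary~\ref{cor:divline} yields that $p\nmid h_n(D_m)$ for infinitely many $n$.

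For $p=2$, the bound $\fl{n/2}-\fl{n/4}$, which is the claim when $4\nmid m$, follows from Corollary~\ref{thm:pdiv10} with $l=2$: the coefficients of $z$, $z^2$, $z^3$ in $S(z^2)-2S(z)$ are $-2s_1=-2$, $s_1-s_2=-2$ and $-\frac{2}{3}s_3\in\{0,-2\}$, all lying in $2\Z_2$, so that \eqref{eq:pS10} holds and \eqref{eq:vpH10} reads $\fl{n/2}-\fl{n/4}$. This uses nothing about $s_4$ and hence applies to every $D_m$.

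To strengthen this to $\fl{n/2}$ when $4\mid m$, I would apply Theorem~\ref{thm:pdiv1} with $p=2$ and, in the notation of that theorem, $l=2$ and $m=1$; then \eqref{eq:vpH1} reads exactly $v_2(h_n)\ge\fl{n/2}$. Conditions \eqref{eq:pS1} and \eqref{eq:p^m} are immediate: the coefficients of $z,z^2,z^3$ are even as before, the $z^4$-term is the automatic one described in Remark~(3), and $s_2-s_4=3-5=-2\equiv 0\pmod{2}$. The crux, and the step I expect to be the main obstacle, is verifying \eqref{eq:sdiff} for all $i>4$. With these parameters its right-hand side equals $-\fl{i/4}+v_2(i)-2^{\fl{\log_2 i}-2}+2$, which is negative for $i\ge 16$; a check of the finitely many $i$ with $5\le i\le 15$ shows it is $\le 0$ except at $i=6$ and $i=8$, where it equals $1$. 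Since $v_2(\la_i)\ge 0$ whenever $\la_i\ne 0$, only these two values require attention. For $i=6$ one has $\la_6=s_6-s_3\in\{0,4\}$, so $v_2(\la_6)\ge 1$; for $i=8$ one has $\la_8=s_8-s_2$, and because $4\mid m$ forces $8\mid 2m$ this equals $9-3=6$, giving $v_2(\la_8)=1$. It is precisely here, at $i=8$, that the hypothesis $4\mid m$ enters: were only $2\mid m$, so that $8\nmid 2m$, then $\la_8=-3$ would be odd and \eqref{eq:sdiff} would fail, which accounts for the improved bound being confined to the case $4\mid m$. Granting \eqref{eq:sdiff}, the bound \eqref{eq:vpH1} completes the argument.
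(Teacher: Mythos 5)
Your argument is correct, and it follows the paper's strategy in outline while differing in which specialisation of the abstract machinery carries the $p=2$ case. For odd $p$ your argument is identical to the paper's: $s_p(D_m)\in\{0,p\}$ by Proposition~\ref{Prop:FiniteDihedral}, so $s_1\not\equiv s_p$~(mod~$p$), and the last assertion of Corollary~\ref{cor:divline} applies. For $p=2$ the paper invokes Corollary~\ref{thm:pdiv1u} with $(l,m)=(2,1)$ when $4\mid m$ and $(l,m)=(2,0)$ when $4\nmid m$, checking only the power-of-two condition \eqref{eq:sdiffu}; you instead use the full Theorem~\ref{thm:pdiv1} with $(l,m)=(2,1)$, respectively Corollary~\ref{thm:pdiv10} with $l=2$, and verify the general condition \eqref{eq:sdiff} by hand. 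This costs you a finite case check ($5\le i\le 15$, with only $i=6$ and $i=8$ critical, both handled correctly: $\la_6=s_6-s_3\in\{0,4\}$ and $\la_8=s_8-s_2=6$ when $4\mid m$), but it buys rigour: Corollary~\ref{thm:pdiv1u} formally assumes $s_n=0$ for $n$ not a power of~$2$, which $D_m$ violates whenever $m$ has an odd divisor greater than~$1$ (e.g.\ $s_3(D_m)=3$ and $s_6(D_m)=7$ when $3\mid m$), so the paper's citation of that corollary is, strictly speaking, not directly applicable, whereas your verification of \eqref{eq:sdiff} covers exactly these mixed indices. Your observation that $i=8$ (i.e.\ $\la_8=s_8-s_2$ being odd when $4\nmid m$) is the precise point where the hypothesis $4\mid m$ enters corresponds to the paper's dichotomy $s_4-s_2=2$ versus $s_4-s_2=-3$, which forces $m=1$ versus $m=0$ there.
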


\begin{proof}
First consider an odd prime number $p$. In that case,
Proposition~\ref{Prop:FiniteDihedral} says that $s_p(D_m)=p$
if $p\mid m$ and $s_p(D_m)=0$ if not. In either case, we have
$s_p(D_m)\equiv0$~(mod~$p$). The assertion on $h_n(D_m)$ in
that case then follows directly from Corollary~\ref{cor:divline}.

In order to establish the second part of the theorem, we observe that,
by Proposition~\ref{Prop:FiniteDihedral}, we have
\begin{equation*}
s_1(D_m)=1,\quad 
s_2(D_m)=3,\quad 
s_4(D_m)=\begin{cases} 
0,&\text{if\/ $4\nmid m$,}\\
5,&\text{if\/ $4\mid m$,}\\
\end{cases}\quad 
s_8(D_m)=\begin{cases} 
0,&\text{if\/ $8\nmid m$,}\\
9,&\text{if\/ $8\mid m$.}\\
\end{cases}
\end{equation*}
Consequently, we have $s_2(D_m)-s_1(D_m)=2\equiv0$~(mod~$2$) and
$s_4(D_m)-s_2(D_m)=2$ if $4\mid m$, while $s_4(D_m)-s_2(D_m)=-3$ 
if $4\nmid m$.
Thus, in the first case, we may apply 
Corollary~\ref{thm:pdiv1u} with $l=2$
and $m=1$, while in the second case we have to
choose $m=0$. The bound on the $2$-divisibility of $h_n(D_m)$ then
follows from \eqref{eq:sdiffu}.
\end{proof}

The following proposition prepares for the result in
Corollary~\ref{Cor:FiniteSimplep} on the residue class modulo~$p$
of the number of permutation representations of a finite non-Abelian
simple group.
Given a finitely generated group $\Gamma$,
we write $T_n(\Gamma)$ for the set of all transitive permutation
representations of $\Gamma$ of degree $n$. 
It is well-known (see e.g.\ \cite[Prop.~3]{KM}) that
\begin{equation}
\label{Eq:TransReps}
\vert T_n(\Gamma)\vert = (n-1)!\,s_n(\Gamma),\quad\quad 
\text{for } n\geq1.
\end{equation}

\begin{proposition}
\label{Prop:FiniteSimpleT}
Suppose that $G$ is a finite non-Abelian simple group. Then, for each
$n\geq2,$ we have $\vert T_n(G)\vert \equiv 0
~(\mathrm{mod}~\vert G\vert)$. 
\end{proposition}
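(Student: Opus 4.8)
The plan is to produce a \emph{free} action of $G$ on the finite set $T_n(G)$: once we have this, every orbit has exactly $|G|$ elements, and the orbit partition shows that $|G|$ divides $|T_n(G)|$. (If $G$ has no subgroup of index $n$, then $T_n(G)=\varnothing$ and there is nothing to prove, so we may assume $T_n(G)\ne\varnothing$.)

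The crucial input, and the one place where the hypotheses are used decisively, is that \emph{every} transitive permutation representation $\varphi\colon G\to S_n$ with $n\ge2$ is faithful. Indeed, $\ker\varphi$ is a normal subgroup of the simple group $G$, hence is either trivial or all of $G$; since the image $\varphi(G)$ is a transitive subgroup of $S_n$ and $n\ge2$, it cannot be the trivial group, so $\ker\varphi=\{1\}$. This is the main (and essentially the only) obstacle; everything after it is bookkeeping.

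Next I would let $G$ act on $T_n(G)$ by conjugating each representation through its own image. Writing $c_g\colon x\mapsto gxg^{-1}$ for the inner automorphism attached to $g\in G$, set
\[
g\cdot\varphi \;:=\; \varphi(g)\,\varphi\,\varphi(g)^{-1},
\]
conjugation in $S_n$ by the permutation $\varphi(g)$. Evaluating at $x\in G$ yields $(g\cdot\varphi)(x)=\varphi(gxg^{-1})$, so this is the same as the precomposition $\varphi\mapsto\varphi\circ c_g$; since $c_g$ is an automorphism of $G$, the image $\varphi(G)$ is unchanged and $g\cdot\varphi$ is again a transitive homomorphism, i.e.\ lies in $T_n(G)$. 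As $g\mapsto c_g$ identifies $G$ with $\mathrm{Inn}(G)$ (here one uses $Z(G)=\{1\}$), this gives an action of $G$ on $T_n(G)$ by inner automorphisms; the handedness of the action is irrelevant for the counting that follows.

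Finally I would verify freeness. An element $g$ fixes $\varphi$ exactly when $\varphi\circ c_g=\varphi$; by faithfulness of $\varphi$ this forces $c_g=\mathrm{id}_G$, i.e.\ $g\in Z(G)$, and $Z(G)=\{1\}$ because $G$ is non-Abelian and simple. Hence every stabiliser is trivial, the action is free, $T_n(G)$ breaks up into orbits of size $|G|$, and therefore $|T_n(G)|\equiv0\pmod{|G|}$. The only points I would take care over are the faithfulness step for $n\ge2$ and the verification that $g\cdot\varphi$ really defines a well-behaved action (as opposed to merely a set map), both of which hinge on simplicity and on $Z(G)=\{1\}$.
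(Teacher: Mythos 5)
Your proof is correct and follows essentially the same route as the paper: you let $G$ act on $T_n(G)$ by precomposition with inner automorphisms, and deduce freeness of the action from the faithfulness of every transitive $\varphi$ (via simplicity and $n\ge2$) together with $Z(G)=\{1\}$ (via simplicity and non-Abelianness). The paper's proof is organised slightly differently — it extracts $[g,h]\in\ker(\varphi)$ from the fixed-point equation and only then invokes simplicity — but the content is identical.
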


\begin{proof}
It is easy to check that $G$ acts from the left on the set $T_n(G)$ by
conjugation, that is, 
\[
g\cdot \varphi = \varphi \circ \iota_g,\quad \quad 
\text{for }g\in G, \,\varphi\in T_n(G). 
\]
Now suppose that $g\cdot \varphi = \varphi$ for some $g\in G$ and
$\varphi\in T_n(G)$. Then we have  
\[
\varphi(g^{-1}hg) = \varphi(h),\quad\quad 
\text{for } h\in G,
\] 
or, equivalently,
\[
[g, h]\in \mathrm{ker}(\varphi),\quad\quad 
\text{for } h\in G.
\]
However, as $\varphi$ is transitive on the set $\{1, 2, \ldots, n\}$
and $n\geq2$ by assumption, we have $\mathrm{ker}(\varphi) \neq G$,
hence, by simplicity of $G$, $\mathrm{ker}(\varphi) = 1$. We thus
conclude that  
\[
[g, h] = 1,\quad\quad 
\text{for } h\in G,
\] 
so that $g\in \zeta_1(G)$. Using again simplicity of $G$, plus the
fact that $G$ is non-Abelian, we find that $\zeta_1(G)=1$, thus
$g=1$. Consequently, the action of $G$ on $T_n(G)$ is free, whence our
claim. 
\end{proof}

\begin{corollary}
\label{Cor:FiniteSimplep}
Let\/ $G$ be a finite non-Abelian simple group, and let $p^m$ be a prime power
dividing  the order of $G$. Then 
\begin{equation}
\label{Eq:FiniteSimplep}
s_n(G) \equiv 0~(\mathrm{mod}~p^m),\quad 2\leq n\leq p.
\end{equation}
\end{corollary}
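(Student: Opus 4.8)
The plan is to combine Proposition~\ref{Prop:FiniteSimpleT} with the identity~\eqref{Eq:TransReps} relating transitive permutation representations to subgroup numbers, and then to isolate the $p$-part of the resulting divisibility.

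First I would record, from \eqref{Eq:TransReps}, that $\vert T_n(G)\vert = (n-1)!\, s_n(G)$ for $n\geq 1$. By Proposition~\ref{Prop:FiniteSimpleT}, for every $n\geq 2$ we have $\vert G\vert \mid \vert T_n(G)\vert$, and since $p^m \mid \vert G\vert$ by hypothesis, this yields
\[
p^m \mid (n-1)!\, s_n(G), \qquad 2\leq n\leq p.
\]

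The decisive observation is that for $2\leq n\leq p$ the factorial $(n-1)!$ is coprime to $p$: its factors $1,2,\ldots,n-1$ are all strictly less than $p$, hence none is divisible by the prime $p$, so $v_p\big((n-1)!\big)=0$. Passing to $p$-adic valuations in the displayed divisibility then gives
\[
m\leq v_p\big((n-1)!\big)+v_p\big(s_n(G)\big)=v_p\big(s_n(G)\big),
\]
which is exactly the assertion $s_n(G)\equiv 0~(\mathrm{mod}~p^m)$.

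There is essentially no obstacle here; the only point requiring a moment's care is the range restriction $n\leq p$, which is precisely what guarantees that the entire factor $p^m$ is forced onto $s_n(G)$ rather than being absorbed, wholly or in part, by the factorial. For $n>p$ the factorial would begin to contribute powers of $p$, and this clean conclusion would no longer follow.
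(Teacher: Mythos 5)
Your proof is correct and follows essentially the same route as the paper: both combine Proposition~\ref{Prop:FiniteSimpleT} with the identity \eqref{Eq:TransReps} and then use the fact that $(n-1)!$ is prime to $p$ for $n\leq p$ (the paper phrases this as invertibility of $(n-1)!$ modulo $p^m$, you phrase it via $p$-adic valuations, but these are the same observation). No gaps; the argument is complete as written.
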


\begin{proof}
Since $n\leq p$ by assumption, $p \nmid (n-1)!$, so that $(n-1)!$ is
invertible modulo~$p^m$. Combining \eqref{Eq:TransReps} with
Proposition~\ref{Prop:FiniteSimpleT}, we get, for $2\leq n\leq p$,
that 
\[
s_n(G) \equiv ((n-1)!)^{-1}\, \vert T_n(G)\vert \equiv
0~(\mathrm{mod}~p^m), 
\]
as desired.
\end{proof}

From the above corollary, we see in particular that 
$s_p(G)\equiv 0$~(mod~$p$). Thus,
by Corollary~\ref{cor:divline}, we conclude that there is no increasing
$p$-divisibility for $h_n(G)$, in the sense that, for each positive
integer $N$, we can find $n>N$ such that $h_n(G)$ is not divisible by $p$.
We now show that the above corollary allows for a sharpening of the last
conclusion: $h_n(G)$ is {\it never} divisible by $p$, and it is in
fact congruent to~$1$ modulo~$p$.

\begin{corollary}
Let\/ $G$ be a finite non-Abelian simple group, and let $p$ be a prime divisor
of the order of $G$. Then we have 
\[
h_n(G) \equiv 1~(\mathrm{mod}~p),\quad n\geq1.
\]
\end{corollary}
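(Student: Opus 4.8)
The plan is to feed the information on the subgroup numbers $s_n(G)$ supplied by Corollary~\ref{Cor:FiniteSimplep} into the generating function identity~\eqref{eq:HS}, to exploit the $m=0$ instance of our main theorem (Corollary~\ref{thm:pdiv10}) in order to obtain a simple recurrence for $h_n(G)$ modulo~$p$, and then to pin down the finitely many residues that remain by means of the Hall recurrence~\eqref{Eq:HallTransform}. The two facts to be used are that $s_1(G)=1$ (the only subgroup of index~$1$ being $G$ itself) and that $s_n(G)\equiv0\pmod p$ for $2\le n\le p$; the latter is exactly Corollary~\ref{Cor:FiniteSimplep} read modulo~$p$ (and, for $n=p$, it is also the observation recorded just before the present corollary). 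Since the numbers $h_n(G)$ are integers, the congruences modulo $p\Z_p$ produced by the abstract results are ordinary integer congruences modulo~$p$.

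First I would verify that Corollary~\ref{thm:pdiv10} applies with $l=1$. As $s_n(G)\in\Z\subseteq\Z_p$ for all~$n$, it remains to check~\eqref{eq:pS10}. Writing $S(z)=\sum_{n\ge1}\frac{s_n(G)}{n}z^n$, the coefficient of $z^k$ in $S(z^{p})-pS(z)$ equals $-p\,s_k(G)/k$ whenever $1\le k\le p-1$, and each such coefficient lies in $p\Z_p$ because $k$ is a unit in $\Z_p$. Hence $S(z^{p})-pS(z)=pJ(z)+O(z^{p})$ with $J(z)=-\sum_{k=1}^{p-1}\frac{s_k(G)}{k}z^k\in\Z_p[z]$, which is precisely~\eqref{eq:pS10} for $l=1$. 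Because $e_p(n;1)=0$ in this case, the quotient in~\eqref{eq:Qrek10} is $Q_n=h_n(G)$, and~\eqref{eq:Qrek10} reads
\[
h_n(G)\equiv -\big(s_p(G)-s_1(G)\big)\,h_{n-p}(G)\pmod p .
\]
Substituting $s_1(G)=1$ and $s_p(G)\equiv0\pmod p$ collapses this to $h_n(G)\equiv h_{n-p}(G)\pmod p$, so the sequence $\big(h_n(G)\bmod p\big)_{n\ge0}$ is periodic with period~$p$.

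It then suffices to determine $h_0(G),\dots,h_{p-1}(G)$ modulo~$p$, and here the recurrence~\eqref{Eq:HallTransform} is available directly, since for $0\le n\le p-1$ every index~$k$ occurring in it satisfies $1\le k\le n\le p-1$. For $2\le k\le p-1$ one has $s_k(G)\equiv0\pmod p$, so all of these summands vanish modulo~$p$, while the $k=1$ term equals $(n)_0\,s_1(G)\,h_{n-1}(G)=h_{n-1}(G)$. Thus $h_n(G)\equiv h_{n-1}(G)\pmod p$ for $1\le n\le p-1$, and since $h_0(G)=1$, a trivial induction gives $h_n(G)\equiv1\pmod p$ for $0\le n\le p-1$. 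Combining this with the period-$p$ congruence from the previous step yields $h_n(G)\equiv1\pmod p$ for all $n\ge1$, as claimed. The only step demanding genuine care is the verification of hypothesis~\eqref{eq:pS10}; once the generating-function machinery of Corollary~\ref{thm:pdiv10} is seen to apply, the remainder is a short and elementary reduction, so I do not anticipate any serious obstacle.
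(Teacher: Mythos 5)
Your proof is correct, but it takes a genuinely different route from the paper's. The paper handles all $n\ge1$ in one stroke using only the recurrence \eqref{Eq:HallTransform}: modulo~$p$, the terms with $2\le k\le p$ vanish because $s_k(G)\equiv0~(\mathrm{mod}~p)$ by Corollary~\ref{Cor:FiniteSimplep}, and the terms with $k\ge p+1$ vanish because the factor $(n-k+1)_{k-1}$ is a product of $k-1\ge p$ consecutive positive integers and is therefore divisible by~$p$ (a point the paper leaves implicit); this yields $h_n(G)\equiv h_{n-1}(G)~(\mathrm{mod}~p)$ for every $n\ge1$, and induction from $h_0(G)=1$ finishes the argument. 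You instead use the recurrence only on the initial segment $0\le n\le p-1$, where no index $k>p$ can occur, and you obtain the passage to general $n$ from the truncated-Dwork machinery: Corollary~\ref{thm:pdiv10} with $l=1$, whose hypothesis \eqref{eq:pS10} you verify correctly (the coefficient of $z^k$ in $S(z^p)-pS(z)$ for $1\le k\le p-1$ is $-p\,s_k(G)/k\in p\Z_p$), and whose congruence \eqref{eq:Qrek10}, with $e_p(n;1)=0$, gives $h_n(G)\equiv\big(s_1(G)-s_p(G)\big)h_{n-p}(G)\equiv h_{n-p}(G)~(\mathrm{mod}~p)$. Both arguments are sound; the paper's is shorter and purely elementary, needing nothing beyond the recurrence and the divisibility of long Pochhammer products, while yours demonstrates the abstract dividing-line machinery in action at the cost of verifying its hypotheses --- a legitimate trade, though heavier than necessary for this particular statement.
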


\begin{proof}
Combining the fact that $s_n(G) \equiv 0~(\mathrm{mod}~p)$ for
$2\leq n\leq p$, coming from Corollary~\ref{Cor:FiniteSimplep}, with
the recurrence in \eqref{Eq:HallTransform}, we find that 
\[
h_n(G) = \sum_{k=1}^n ( n-k+1)_{k-1} s_k(G) h_{n-k}(G)
\equiv h_{n-1}(G)~(\mathrm{mod}~p),\quad n\geq1.
\]
Since $h_0(G) = 1$, the result follows.
\end{proof}

\section{$p$-Divisibility of homomorphism numbers for 
finite Abelian $p$-groups}
\label{sec:Abel}

\noindent In this section, we provide tight bounds on the divisibility
by powers of a prime~$p$ of the number of permutation representations 
of finite Abelian $p$-groups. Theorems~\ref{thm:rang} and
\ref{thm:rang2} below
refine the results of Katsurada, Takegahara and Yoshida
\cite[Theorems~1.2--1.4]{KaTYAA} for rank~$1$ and $2$
by adding a periodicity assertion for quotients, 
while, at the same time, generalising them to arbitrary rank.

\begin{theorem} \label{thm:rang}
Let\/ $G=C_{p^{a_1}}\times C_{p^{a_2}}\times\dots \times
C_{p^{a_r}}$ with $a_1\ge a_2\ge \dots \ge a_r$. 

{\em (i)} If $a_1> a_2+\dots+a_r,$ then
\begin{equation} \label{eq:hnranga} 
v_p\big(h_n(G)\big)\ge
\sum_{s=1}^{a_1}\fl{\frac {n} {p^s}}
-(a_1-a_2-\dots-a_r)\fl{\frac {n} {p^{a_1+1}}}.
\end{equation}
Moreover, the quotient
\begin{equation*} 
Q_n(G)=\frac {h_n(G)} 
{p^{e_p(n;a_1,\dots,a_r)}},
\end{equation*}
where $e_p(n;a_1,\dots,a_r)$ 
denotes the right-hand side of \eqref{eq:hnranga}$,$
satisfies
\begin{equation} \label{eq:Qrek1ra} 
Q_n(G)\equiv (-1)^{a_1}Q_{n-p^{a_1+1}}(G) 
\quad \text{\em (mod $p$)}.
\end{equation}
In particular, the bound in \eqref{eq:hnranga} is tight for all $n$ which are
divisible by $p^{a_1+1}$.

{\em (ii)} If $a_1\le a_2+\dots+a_r$ and $a_1+a_2+\dots+a_r$ is even, then
\begin{equation} \label{eq:hnrangb} 
v_p\big(h_n(G)\big)\ge
\sum_{s=1}^{A_1}\fl{\frac {n} {p^s}},
\end{equation}
where $A_1=(a_1+a_2+\dots+a_r)/2$.
Moreover, if $p>2,$ the quotient
\begin{equation*} 
Q_n(G)=\frac {h_n(G)} 
{p^{e_p(n;A_1)}},
\end{equation*}
where $e_p(n;A_1)$ denotes the right-hand side of \eqref{eq:hnrangb}$,$
satisfies
\begin{equation} \label{eq:Qrek1rb} 
Q_n(G)\equiv (-1)^{A_1}Q_{n-p^{A_1+1}}(G) 
\quad \text{\em (mod $p$)}.
\end{equation}
In particular, the bound in \eqref{eq:hnrangb} is tight for all $n$ which are
divisible by $p^{A_1+1},$ except if $p=2$.

{\em (iii)} If $a_1\le a_2+\dots+a_r$ and $a_1+a_2+\dots+a_r$ is odd, then
\begin{equation} \label{eq:hnrangc} 
v_p\big(h_n(G)\big)\ge
\sum_{s=1}^{A_2}\fl{\frac {n} {p^s}}-\fl{\frac {n} {p^{A_2+1}}},
\end{equation}
where $A_2=(a_1+a_2+\dots+a_r+1)/2$.
Moreover, the quotient
\begin{equation*} 
Q_n(G)=\frac {h_n(G)} 
{p^{e_p(n;A_2)}},
\end{equation*}
where $e_p(n;A_2)$ denotes the right-hand side of \eqref{eq:hnrangc}$,$
satisfies
\begin{equation} \label{eq:Qrek1rc} 
Q_n(G)\equiv (-1)^{A_2}Q_{n-p^{A_2+1}}(G) 
\quad \text{\em (mod $p$)}.
\end{equation}
In particular, the bound in \eqref{eq:hnrangc} is tight for all $n$ which are
divisible by $p^{A_2+1}$.
\end{theorem}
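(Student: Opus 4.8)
The plan is to deduce all three parts from the abstract results of Section~\ref{sec:pdiv1}, applied to the sequence $s_n=s_n(G)$. Because $G$ is a $p$-group, $s_n(G)=0$ whenever $n$ is not a power of $p$, so that, via \eqref{eq:HS}, we are exactly in the setting of Corollary~\ref{thm:pdiv1u}, with $\sum_{n\ge0}\frac{h_n(G)}{n!}z^n=\exp\big(\sum_{e\ge0}\frac{s_{p^e}(G)}{p^e}z^{p^e}\big)$. For each of the three cases I would choose parameters $l,m$ so that the right-hand side of \eqref{eq:vpH1u} matches the asserted bound, verify the hypotheses \eqref{eq:pS1u}--\eqref{eq:sdiffu}, and then read off the recurrence \eqref{eq:Qrek1u} and the tightness clause. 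Comparing bounds forces $(l,m)=(a_1+1,\,a_2+\cdots+a_r)$ in~(i), $(l,m)=(A_1+1,\,A_1)$ in~(ii), and $(l,m)=(A_2+1,\,A_2-1)$ in~(iii); in each case the condition $m<l$ is exactly the stated hypothesis relating $a_1$ to $a_2+\cdots+a_r$.

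Write $c_e:=s_{p^e}(G)$ and $|\lambda|=a_1+\cdots+a_r$. The heart of the argument, which I would isolate as a lemma, is the arithmetic of the sequence $(c_e)$. Self-duality of finite Abelian groups gives the palindromic symmetry $c_e=c_{|\lambda|-e}$, and Frobenius' theorem (as used already in the proof of Theorem~\ref{thm:pgroup}) gives $c_e\equiv1$~(mod~$p$) for $0\le e\le|\lambda|$. What is needed beyond this is the precise behaviour of the consecutive differences: the sequence is unimodal, its last strict increase occurring at the index $m=\min(a_2+\cdots+a_r,\fl{|\lambda|/2})$, and I claim
\[
v_p(c_{e-1}-c_e)\ge e\quad(1\le e<l),\qquad v_p(c_{l-1}-c_l)=m,
\]
with the normalised leading jump $p^{-m}(c_l-c_{l-1})\equiv-1$~(mod~$p$). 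To establish this I would expand $c_e$ via the Hall--Butler formula for the number of subgroups of a given order in terms of Gaussian binomial coefficients (equivalently, Kostka--Foulkes polynomials), reduce modulo successive powers of~$p$, and track how the valuation of the lowest-order correction to the Frobenius value~$1$ grows with $e$. This estimate is the main obstacle; the rank-$\le2$ computations of \cite{KaTYAA} serve as a model, but for arbitrary rank one must argue uniformly that the partitions contributing the leading corrections are controlled and identify the critical jump.

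Granting the lemma, the remaining verifications are routine. The bound $v_p(c_{e-1}-c_e)\ge e$ for $e<l$ says that the coefficient $(c_{e-1}-c_e)/p^{e-1}$ of $z^{p^e}$ in $S(z^p)-pS(z)$ lies in $p\Z_p$, which is precisely the truncated Dwork condition \eqref{eq:pS1u}; the congruence \eqref{eq:p^mu} is the statement $v_p(c_{l-1}-c_l)\ge m$. The tail hypothesis \eqref{eq:sdiffu} I expect to be essentially vacuous here, its right-hand side being negative over the short relevant range of $e$ (and $c_e$ vanishing for $e>|\lambda|$); this I would dispose of by a direct finite check. The recurrences \eqref{eq:Qrek1ra}--\eqref{eq:Qrek1rc} then follow from \eqref{eq:Qrek1u} once the normalised jump is identified as $\equiv-1$, the sign $(-1)^l\cdot(-1)=(-1)^{l+1}$ producing the exponents $a_1$, $A_1$, $A_2$ recorded in the theorem. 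Finally, the \emph{equality} $v_p(c_{l-1}-c_l)=m$ gives $s_{p^{l-1}}\not\equiv s_{p^l}$~(mod~$p^{m+1}$), which triggers the tightness clause of Corollary~\ref{thm:pdiv1u}; the sole exception is case~(ii) with $p=2$, where the conditions of Theorem~\ref{thm:pdiv12} happen to hold and yield a strictly stronger bound, so that \eqref{eq:hnrangb} cannot be tight---precisely the phenomenon isolated in Theorem~\ref{thm:rang2}.
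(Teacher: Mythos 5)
Your overall strategy coincides exactly with the paper's: reduce to Corollary~\ref{thm:pdiv1u} via \eqref{eq:HS}, with the same parameter choices $(l,m)=(a_1+1,\,a_2+\cdots+a_r)$, $(A_1+1,A_1)$, $(A_2+1,A_2-1)$ in the three cases, the same sign bookkeeping for the quotient recurrences, and the same treatment of the $p=2$ exception in case~(ii) (failure of \eqref{eq:sdiffu}, repaired by the Theorem~\ref{thm:pdiv12} route, i.e.\ Theorem~\ref{thm:rang2}). However, there is a genuine gap at precisely the point you flag as ``the main obstacle'': the lemma on the differences $s_{p^i}(G)-s_{p^{i-1}}(G)$ --- the exact valuations $v_p=i$ up to the critical index, the vanishing plateau, the exact value $m$ (not merely $\geq m$) at the critical jump, and the normalised leading coefficient $\equiv-1$ --- is asserted, not proved. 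Your proposed route, expanding each subgroup count $c_e$ and ``tracking the lowest-order correction to the Frobenius value $1$,'' does not obviously work uniformly in the rank: the individual $c_e$ have complicated $p$-adic expansions, and extracting an \emph{exact} valuation and sign for their differences this way is a serious combinatorial problem, which is why the rank-$\le 2$ computations of \cite{KaTYAA} did not previously extend.

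The missing idea, which the paper uses, is Butler's identity \cite{ButlAA}: for $i\le A_1$ the difference of successive subgroup numbers collapses to a \emph{single} specialised Kostka--Foulkes polynomial,
$s_{p^i}(G)-s_{p^{i-1}}(G)=p^{n(\al)}K_{(2A_1-i,i),\al}(p^{-1})$ (equation \eqref{eq:sK} in the paper). Combined with two standard facts --- $K_{\la,\mu}(t)$ vanishes unless $\mu$ is dominated by $\la$, and is otherwise monic of the appropriate degree --- and with the self-duality symmetry \eqref{eq:sym3}, this yields in one stroke everything your lemma requires: the exact valuation $i$ below the threshold, the identification of the vanishing range (dominance fails exactly when $i>a_2+\cdots+a_r$), the exact valuation $2A_1-i+1$ above the threshold, and the leading term $-p^{2A_1-i+1}$ at the critical index \eqref{eq:diff1c}, which gives both the strict non-congruence needed for tightness and the factor $-1$ that turns $(-1)^l$ into $(-1)^{a_1}$, $(-1)^{A_1}$, $(-1)^{A_2}$. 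Without this identity (or an equivalent uniform substitute), your argument establishes only the framework, not the theorem.
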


\begin{proof}
By a short and elegant computation, Butler \cite[display on top of
p.~773]{ButlAA} proved that the difference of ``successive" subgroup
numbers in a finite Abelian $p$-group of type $\al
=(a_1,a_2,\dots,a_r)$ essentially equals a specialised Kostka--Foulkes
polynomial. To be precise, taking into account the symmetry relation
(cf.\ e.g.\ \cite[p.~181, Statement~(1.5)]{MacdAC})
\begin{equation} \label{eq:sym3} 
s_{p^i}=s_{p^{2A_1-i}},
\end{equation} 
Butler found that
\begin{equation} \label{eq:sK} 
s_{p^i}(G)-s_{p^{i-1}}(G)=
p^{n(\al)}K_{(2A_1-i,i),\al}(p^{-1}),
\quad \text{for $i\le A_1$},
\end{equation}
where $K_{\la,\mu}(t)$ denotes the Kostka--Foulkes polynomial indexed by
partitions $\la$ and $\mu$ (see \cite[Ch.~III, Sec.~6]{MacdAC} for the
definition), and where $n(\al)=\sum_{i=1}^r(i-1)a_i$.
It is known (cf.\ \cite[Statement~(6.5)(ii) on p.~243]{MacdAC}) 
that $K_{\la,\mu}(t)$ vanishes if $\mu$ is not less than or equal to $\la$
in dominance order (see \cite[p.~7]{MacdAC} for the definition),
and that it is a monic polynomial of degree
$n(\la)-n(\mu)$ otherwise. 
We should observe that $\al$ is not less
than or equal to $(2A_1-i,i)$ if, and only if, $2A_1-i<a_1$, that is,
$i>a_2+\dots+a_r$. 
If we use all this and the simple fact that
$n\big((2A_1-i,i)\big)=i$ to rewrite \eqref{eq:sK},
$$
s_{p^i}(G)-s_{p^{i-1}}(G)=
p^{i}\big(p^{n(\al)-n((2A_1-i,i))}K_{(n-i,i),\al}(p^{-1})\big),
$$
and combine this with the symmetry relation \eqref{eq:sym3},
then it follows immediately that 
\begin{equation} \label{eq:diff1}
v_p\big(s_{p^i}(G)-s_{p^{i-1}}(G)\big)=
\begin{cases}
i,&\text{for }0\le i\le \min\left\{2A_1-a_1,A_1\right\},\\
2A_1-i+1,&\text{for }\max\left\{a_1,A_1\right\}+1\le i\le 2A_1+1,
\end{cases}
\end{equation}
and
\begin{multline} \label{eq:diff1b}
s_{p^i}(G)-s_{p^{i-1}}(G)=0,\quad 
\text{for } 2A_1-a_1+1\le i\le a_1,\\
\text{and for }i=A_2\text{ if $a_1+\dots+a_r$ is odd.}
\end{multline}
Moreover, we have
\begin{align} 
\notag
s_{p^i}(G)-s_{p^{i-1}}(G)&=-p^{2A_1-i+1}+\text{higher degree terms},\\
\notag
&\text{for } i=a_1+1\text{ if }a_1>a_2+\dots+a_r,\\
\notag
&\text{for }i=A_1+1\text{ if $a_1\le a_2+\dots+a_r$ and
$a_1+\dots+a_r$ is even, }\\
&\text{and for }i=A_2+1\text{ if $a_1\le a_2+\dots+a_r$ and
$a_1+\dots+a_r$ is odd.}
\label{eq:diff1c}
\end{align}

If we let $a_1> a_2+\dots+a_r$, then \eqref{eq:diff1}--\eqref{eq:diff1c}
imply that we may apply 
Corollary~\ref{thm:pdiv1u} with $l=a_1+1$ and $m=2A_1-a_1+1=a_2+\dots+a_r$.
Using Legendre's formula \cite[p.~10]{LegeAA} for the $p$-adic
valuation of a factorial again, we obtain
the bound \eqref{eq:hnranga}, as well as the congruence
\eqref{eq:Qrek1ra} and the corresponding tightness assertion.

Similarly, 
if $a_1\le a_2+\dots+a_r$ and $a_1+a_2+\dots+a_r$ is even, 
then \eqref{eq:diff1} and \eqref{eq:diff1c} imply that
we may apply Corollary~\ref{thm:pdiv1u} with
$l=A_1+1$ and $m=A_1$,
except if $p=2$. Indeed, while \eqref{eq:pS1u} and \eqref{eq:p^mu}
are satisfied for the choice of $l=A_1+1$ and $m=A_1$,
the inequality \eqref{eq:sdiffu} is satisfied only if we do not
have $p=2$. Namely, if we choose $e=A_1+2$ in the latter case,
the left-hand side of \eqref{eq:sdiffu} equals $A_1-1$,
while the right-hand
side gives $-2-1+(A_1+2)+1=A_1$, contradicting \eqref{eq:sdiffu}. 
However, if we exclude the case where $p=2$, then
Corollary~\ref{thm:pdiv1u} 
together with Legendre's formula \cite[p.~10]{LegeAA} imply
the bound \eqref{eq:hnrangb}, 
as well as the congruence \eqref{eq:Qrek1rb} and
corresponding tightness assertion.
In the exceptional case where $p=2$, the stronger
Theorem~\ref{thm:rang2} below fills the hole.

Finally, if $a_1\le a_2+\dots+a_r$ and $a_1+a_2+\dots+a_r$ is odd, 
then \eqref{eq:diff1}--\eqref{eq:diff1c} imply that
we may apply Corollary~\ref{thm:pdiv1u} with 
$l=A_2+1$ and $m=A_2-1$.
Using Legendre's formula \cite[p.~10]{LegeAA}, 
we then obtain the bound \eqref{eq:hnrangc},
as well as the congruence \eqref{eq:Qrek1rc} and
corresponding tightness assertion.

This completes the proof of the theorem.
\end{proof}

In the exceptional case where $p=2$, $a_1\le a_2+\dots+a_r$, 
and $a_1+a_2+\dots+a_r$ is
even, we may instead apply Theorem~\ref{thm:pdiv12} to obtain
a stronger $2$-divisibility result. Since the proof is
straightforward from \eqref{eq:diff1}--\eqref{eq:diff1c}, 
we content ourselves with the statement of the result.

\begin{theorem} \label{thm:rang2}
Let\/ $G=C_{2^{a_1}}\times C_{2^{a_2}}\times\dots \times
C_{2^{a_r}}$ with $a_1\ge a_2\ge \dots \ge a_r,$
$a_1\le a_2+\dots+a_r$, and $a_1+a_2+\dots+a_r$ being even.
Then
\begin{equation} \label{eq:hnrang2} 
v_2\big(h_n(G)\big)\ge \sum_{s=1}^{A_1}\fl{\frac {n} {2^s}}
+\fl{\frac {n} {2^{A_1+2}}}
-\fl{\frac {n} {2^{A_1+3}}},
\end{equation}
where $A_1=(a_1+a_2+\dots+a_r)/2$.
Moreover, the quotient
\begin{equation*} 
Q_n(G)=\frac {h_n(G)} 
{2^{e_2(n;A_1)}},
\end{equation*}
where $e_2(n;A_1)$ denotes the right-hand side of \eqref{eq:hnrang2}$,$
satisfies
\begin{equation} \label{eq:Qrek2} 
Q_n(G)\equiv Q_{n-2^{A_1+3}}(G) 
\quad \text{\em (mod $2$)}.
\end{equation}
The bound is tight for all $n$ congruent to 
$0,$ $2^{A_1+1},$ and $2^{A_1+2}$ modulo $2^{A_1+3}$.
\end{theorem}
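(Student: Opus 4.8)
The plan is to deduce Theorem~\ref{thm:rang2} from Theorem~\ref{thm:pdiv12} applied with $l=A_1+1$ (note $l\ge2$ since $A_1\ge1$), in exact parallel to the way Theorem~\ref{thm:rang} was obtained from Corollary~\ref{thm:pdiv1u}. Via \eqref{eq:HS} I identify the abstract coefficients $s_n$ with the subgroup numbers $s_n(G)$; since $G$ is a $2$-group, $s_n(G)=0$ unless $n$ is a power of~$2$, and the relevant $2$-adic data are furnished by \eqref{eq:diff1}--\eqref{eq:diff1c}, already established in the proof of Theorem~\ref{thm:rang}. Because $a_1\le a_2+\dots+a_r$, one has $a_1\le A_1$, so the ``plateau'' range in \eqref{eq:diff1b} is empty, and \eqref{eq:diff1} gives $v_2\big(s_{2^i}(G)-s_{2^{i-1}}(G)\big)=i$ for $1\le i\le A_1$ and $=2A_1-i+1$ for $A_1+1\le i\le 2A_1+1$; I will also use the symmetry \eqref{eq:sym3} and the vanishing $s_{2^i}(G)=0$ for $i>2A_1$.

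The bulk of the work is verifying the hypotheses of Theorem~\ref{thm:pdiv12} for this~$l$. For \eqref{eq:pS12} I compute that the coefficient of $z^{2^j}$ in $S(z^2)-2S(z)$ equals $\big(s_{2^{j-1}}-s_{2^j}\big)/2^{j-1}$; for $1\le j\le A_1=l-1$ this has valuation~$1$ (and the $z$-coefficient is $-2s_1\in2\Z_2$), so these are absorbed into $2J(z)$, while for $j=l$ it is the isolated term, a $2$-adic unit by \eqref{eq:diff1c}. The two congruences in \eqref{eq:p^m2a} and the congruence \eqref{eq:p^m2b} then reduce to the valuation facts $v_2(s_{2^l}-s_{2^{l-1}})=l-1$ and $v_2(s_{2^{l+1}}-s_{2^l})=l-2$, together with the leading term $-2^{A_1}$ of $s_{2^{l}}-s_{2^{l-1}}$ supplied by \eqref{eq:diff1c}; all three then follow immediately by comparison of valuations.

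The one genuinely computational hypothesis is \eqref{eq:sdiff2}. Since $G$ is a $2$-group, $\lambda_i=0$ unless $i=2^f$, in which case $\lambda_{2^f}=s_{2^f}-s_{2^{l-1}}$. I treat $l+2\le f\le 2A_1$ and $f>2A_1$ separately. In the first range, telescoping together with the strictly decreasing valuations gives $v_2(\lambda_{2^f})=2A_1-f+1$, and after substituting $l=A_1+1$ the required inequality collapses to $7\cdot 2^{f-l-2}\ge 2(f-l)+3$, which holds for all $f\ge l+2$; in the second range $s_{2^f}=0$, so $\lambda_{2^f}=-s_{2^{A_1}}$ and the right-hand side of \eqref{eq:sdiff2} is already $\le0$, whence the estimate is trivial. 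With all hypotheses in place, \eqref{eq:vpH12} becomes precisely \eqref{eq:hnrang2}; and in \eqref{eq:Qrek12} the prefactor $2^{-l+2}(s_{2^{l+1}}-s_{2^l})$ has valuation $-(l-2)+(l-2)=0$, i.e.\ is a unit, so the recurrence collapses to \eqref{eq:Qrek2}.

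For tightness, the first alternative of the tightness clause of Theorem~\ref{thm:pdiv12} applies at once, since $v_2(s_{2^l}-s_{2^{l+1}})=l-2<l-1$ shows $s_{2^l}\not\equiv s_{2^{l+1}}\pmod{2^{l-1}\Z_2}$, giving tightness for $n\equiv0,\,2^{A_1+1}\pmod{2^{A_1+3}}$. The tightness at $n\equiv 2^{A_1+2}$ is where the main obstacle lies: it is governed by the residue of $s_{2^{l-1}}-s_{2^{l+1}}$ modulo $2^l$, equivalently by the parity of the integer $\al$ in $s_{2^{l+1}}=s_{2^{l-1}}+2^{l-2}+\al\,2^{l-1}$, and separating the two subcases requires the leading $2$-adic \emph{coefficient} modulo~$4$ (not merely the valuation) of $s_{2^{A_1-1}}(G)-s_{2^{A_1-2}}(G)$, which by \eqref{eq:sym3} equals $s_{2^{A_1+1}}(G)-s_{2^{A_1+2}}(G)$. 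This refinement goes beyond what \eqref{eq:diff1}--\eqref{eq:diff1c} record as stated, and must be extracted from the explicit two-row Kostka--Foulkes polynomials underlying \eqref{eq:sK}; I expect this single congruence to be the decisive and most delicate step in pinning down the tightness assertion.
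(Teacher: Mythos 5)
Your verification of the hypotheses of Theorem~\ref{thm:pdiv12} with $l=A_1+1$, and hence of the bound \eqref{eq:hnrang2} and the congruence \eqref{eq:Qrek2}, is correct and is exactly the route the paper intends (its entire ``proof'' is the sentence preceding the statement); the reduction of \eqref{eq:sdiff2} to $7\cdot2^{f-l-2}\ge 2(f-l)+3$ for $l+2\le f\le 2A_1$, the trivial estimate for $f>2A_1$, and the unit prefactor in \eqref{eq:Qrek12} are all accurate, as is tightness at $n\equiv 0,\,2^{A_1+1}$. The genuine gap is the one you flag but do not close: tightness at $n\equiv 2^{A_1+2}\pmod{2^{A_1+3}}$ is nowhere established --- you end by ``expecting'' the requisite congruence for $s_{2^{l-1}}-s_{2^{l+1}}$ modulo $2^l$ to be extractable from two-row Kostka--Foulkes polynomials. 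A proof cannot end with an expectation, so as a proof of the printed statement the proposal is incomplete.

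You should know, however, that this gap cannot be closed, because that part of the statement is false; your suspicion that \eqref{eq:diff1}--\eqref{eq:diff1c} do not suffice is an understatement. First, the final clause of Theorem~\ref{thm:pdiv12} is printed with the two cases interchanged relative to what its own proof shows: writing $s_{2^{l+1}}=s_{2^{l-1}}+2^{l-2}+\al 2^{l-1}$, the proof gives $Q_{2^{l+1}}\equiv 1+\al$ and $Q_{3\cdot2^l}\equiv\al$ (mod~$2$), so tightness at $2^{l+1}$ requires $\al$ \emph{even}, the opposite of the printed condition; the proof's version is the correct one, since for $G=C_2\times C_2$ one has $h_8=42400=2^5\cdot1325$ (tight) and $h_{12}=122074624=2^9\cdot 238427$ (not tight). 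Second, the parity of $\al$ --- equivalently, as you correctly isolate, the parity of the subleading coefficient of $K_{(A_1+1,A_1-1),(a_1,\dots,a_r)}$ --- genuinely depends on the group. Take $G=C_4\times C_2\times C_2$, so $A_1=2$, $l=3$, $(s_1,s_2,s_4,s_8,s_{16})=(1,7,11,7,1)$. With $\widetilde H=\exp(\widetilde S)\in\Z_2[[z]]$ as in your own reduction, comparing coefficients of $z^{16}$ gives
\begin{equation*}
\frac{h_{16}}{16!}=\widetilde H_{16}+\widetilde H_{8}\,\frac{s_8-s_4}{8}
+\frac{(s_8-s_4)^2}{2\cdot 8^2}+\frac{s_{16}-s_4}{16}
=\widetilde H_{16}-\frac{\widetilde H_8}{2}-\frac{1}{2},
\end{equation*}
whence $v_2(h_{16})\ge v_2(16!)-1=14>13=e_2(16;2)$: the bound is \emph{not} tight at $n=16\equiv 2^{A_1+2}$ (the same failure occurs for $C_2^4$, where $(s_{2^i})_i=(1,15,35,15,1)$ and $\al=-9$). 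The correct assertion is that the bound is tight for $n$ congruent to $0$, $2^{A_1+1}$, and exactly one of $2^{A_1+2}$ or $3\cdot2^{A_1+1}$ modulo $2^{A_1+3}$, the alternative being governed by the parity you identified; so, once the swapped cases in Theorem~\ref{thm:pdiv12} are restored, your proposal proves everything about this statement that is actually true.
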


\section{Periodicity of 
Subgroup numbers for free products of finite Abelian groups}
\label{sec:per}

In \cite{GrNeAA}, Grady and Newman used their $p$-divisibility
result for the homomorphism number $h_n(C_p)$ of $C_p$ mentioned in the
introduction to demonstrate ultimate periodicity modulo~$p$ for the subgroup
numbers of free powers of $C_p$. Armed with our much more general
$p$-divisibility results from the previous section, and using
the same approach, we may now derive ultimate periodicity modulo~$p$ 
for much larger classes of free products.

\begin{theorem} \label{thm:per}
Let $p$ be a prime number. Furthermore,
let $\Gamma_0$ be a finitely generated group, and let
$G_1,G_2,\dots$ be non-trivial finite Abelian $p$-groups. 
In each of the following cases,
the arithmetic function $s_n(\Gamma)$ forms
an ultimately periodic sequence modulo~$p$:

\begin{enumerate} 
\item $\Gamma=\Gamma_0*G_1*G_2$ and $p\ge5$.
\item $\Gamma=\Gamma_0*G_1*G_2,$ $p=3,$ and not both $G_1$ and $G_2$
are isomorphic to $C_3$.
\item $\Gamma=\Gamma_0*G_1*G_2*G_3$ and $p=3$.
\item $\Gamma=\Gamma_0*G_1*G_2,$ $p=2,$ and both
$G_1$ and $G_2$ are not isomorphic to $C_2$.
\item $\Gamma=\Gamma_0*C_2*G_1,$ $p=2,$ and $G_1$ is not isomorphic 
to $C_2,C_4,C_8,C_2\times C_2,\break C_2\times C_2\times C_2,$ or
$C_4\times C_2$.
\item $\Gamma=\Gamma_0*G_1*G_2*G_3,$ $p=2,$ and not all
of $G_1,G_2,G_3$ are isomorphic to $C_2$.
\item $\Gamma=\Gamma_0*C_2*C_2*C_2*C_2$ and $p=2$.
\end{enumerate}

\end{theorem}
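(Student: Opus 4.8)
The plan is to reduce the periodicity assertion to a rationality statement over $\mathbb{F}_p$ and then feed into it the $p$-divisibility bounds of Section~\ref{sec:Abel}. Recall the elementary fact that a sequence with values in the finite field $\mathbb{F}_p$ is ultimately periodic if and only if its generating function represents a rational function in $\mathbb{F}_p(z)$. Thus it suffices to show that $\sum_{n\ge1}s_n(\Gamma)z^{n-1}$, reduced modulo~$p$, is rational. For this I would exploit the logarithmic derivative of \eqref{eq:HS}: writing $H(z)=H_\Gamma(z)$ and $S(z)=S_\Gamma(z)$, differentiation gives $H'(z)=S'(z)H(z)$ with $S'(z)=\sum_{n\ge1}s_n(\Gamma)z^{n-1}\in\Z_p[[z]]$. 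Hence, \emph{provided} $H(z)\in\Z_p[[z]]$ and its reduction $\bar H(z)$ modulo~$p$ is a \emph{polynomial} (necessarily with $\bar H(0)=1$, so a unit in $\mathbb{F}_p[[z]]$), the identity reduces to $\bar H'=\bar S'\,\bar H$, whence $\bar S'(z)=\bar H'(z)/\bar H(z)\in\mathbb{F}_p(z)$ is rational and we are done.

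The structural input is the multiplicativity of homomorphism numbers under free products: since a homomorphism out of a free product is an independent choice of homomorphisms on the factors,
\[
h_n(\Gamma)=h_n(\Gamma_0)\prod_i h_n(G_i),\qquad n\ge0.
\]
Because $v_p\big(h_n(\Gamma_0)\big)\ge0$, the arbitrary factor $\Gamma_0$ can only help, and everything comes down to the single estimate that $v_p\big(\prod_i h_n(G_i)\big)\ge v_p(n!)$ for all $n$, with strict inequality for all large $n$. The first inequality guarantees $H(z)\in\Z_p[[z]]$ (so that reduction modulo~$p$ is legitimate, and $H'\in\Z_p[[z]]$ as well, since $v_p\big(h_n/(n-1)!\big)\ge v_p(n)\ge0$), while the strict inequality forces all but finitely many coefficients of $\bar H$ to vanish, i.e.\ makes $\bar H$ a polynomial. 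Each factor $v_p\big(h_n(G_i)\big)$ is bounded below by the explicit floor-sum expressions of Theorems~\ref{thm:rang} and~\ref{thm:rang2} (via \eqref{eq:HS}), so the displayed estimate becomes a concrete inequality between sums of floor functions.

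To see that the seven cases are exactly the right ones, I would compare leading growth rates. The bound for $G_i$ furnished by Theorem~\ref{thm:rang} (or the sharper Theorem~\ref{thm:rang2} when $p=2$ and the type has even sum) grows like $\rho_i\,n$, while $v_p(n!)\sim n/(p-1)$; the smallest rate occurs for $G_i=C_p$, with $\rho=(p-1)/p^2$. A short computation shows $\sum_i\rho_i\ge 1/(p-1)$ precisely in the configurations (1)--(7): for instance two factors suffice iff $2(p-1)^2\ge p^2$, i.e.\ $p\ge5$ (case~(1)), two copies of $C_3$ fall just short (excluded in~(2)), and for $p=2$ the admissible lists in~(4)--(7) are exactly those $G_i$ whose Theorem~\ref{thm:rang2}-rates sum to at least~$1$. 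When the inequality is strict, the difference $v_p\big(\prod_ih_n(G_i)\big)-v_p(n!)$ tends to $+\infty$ and both required inequalities are immediate. In the boundary cases (four copies of $C_2$, two copies of $C_4$, and the like) the difference stays bounded, and here lies the main obstacle: one must verify the sharp floor-function inequality by hand, checking that the discrepancy is $\ge0$ for every $n$ and $\ge1$ for all large $n$. This is where the precise (rather than merely asymptotic) form of the bounds in Theorems~\ref{thm:rang} and~\ref{thm:rang2}---and, for $p=2$, the improvement provided by the latter---is essential. Once this is in place, the logarithmic-derivative argument of the first paragraph yields the rationality of $\bar S'$ and hence the ultimate periodicity of $s_n(\Gamma)$ modulo~$p$ in every case.
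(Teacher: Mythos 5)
Your proposal takes, in substance, the same route as the paper's proof: the identity $H'(z)=S'(z)H(z)$, read coefficientwise, is precisely the recurrence \eqref{eq:rec2} that the paper obtains by dividing \eqref{Eq:HallTransform} by $(n-1)!$; your ``rational over $\mathbb{F}_p$ $\Rightarrow$ ultimately periodic'' step is the same standard finite-field fact that the paper invokes via \cite[Ch.~8]{LiNie}, in the equivalent guise ``constant-coefficient linear recurrence over $\mathbb{F}_p$ $\Rightarrow$ ultimately periodic''; and the quantitative reduction coincides: by multiplicativity of $h_n$ under free products, everything rests on showing that $v_p\big(h_n(\Gamma)\big)\ge v_p(n!)$ for all $n$, with strict inequality for almost all $n$, using the bounds of Theorems~\ref{thm:rang} and~\ref{thm:rang2}.

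The genuine gap is that you never establish these two inequalities in the seven cases: you replace the verification by a comparison of leading growth rates and explicitly defer the ``boundary cases'' to a check ``by hand''. This is not a dispensable detail, because the boundary cases are exactly the extremal configurations that the hypotheses (1)--(7) are calibrated to admit, and for them the rate argument is genuinely inconclusive. In case (4) with $G_1=G_2=C_4$, in case (5) with $G_1=C_8\times C_2$, in case (6) with $(G_1,G_2,G_3)=(C_2,C_2,C_4)$, and in case (7), the rates sum to exactly $1=1/(p-1)$, so the difference between the available lower bound on $v_2\big(\prod_i h_n(G_i)\big)$ and $v_2(n!)$ does \emph{not} tend to infinity; for instance, in case (7) the quantity $4\big(\fl{n/2}-\fl{n/4}\big)-v_2(n!)$ equals $1$ along $n=2^k$, $k\ge2$. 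Whether the discrepancy is $\ge0$ for every $n$ and $\ge1$ for almost all $n$ then hinges on the exact interplay of the floor terms, which the paper settles by writing $n$ in base $p$ and estimating digit by digit (the computations culminating in \eqref{eq:(1)a}--\eqref{eq:(6)a}, with case (7) delegated to \cite[Theorem~1]{GrNeAB}); this case analysis is the actual content of the proof of Theorem~\ref{thm:per}, and it is what your outline omits. A second, smaller point: even in the strict-rate cases, divergence of the discrepancy yields the inequalities only for large $n$, whereas your reduction needs $v_p\big(h_n(\Gamma)\big)\ge v_p(n!)$ for \emph{all} $n$ before the reduction of $H(z)$ modulo $p$ is legitimate at all; this, too, requires the (easy) uniform digit estimate rather than asymptotics.
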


\begin{proof}
We know that the sequence $\big(s_n(\Gamma)\big)_{n\ge1}$ satisfies the
recurrence \eqref{Eq:HallTransform}. By dividing both sides of the 
recurrence by $(n-1)!$, we obtain the equivalent form
\begin{equation} \label{eq:rec2} 
\frac {nh_n(\Gamma)} {n!} = \sum_{k=1}^n  s_k(\Gamma)\,
\frac {h_{n-k}(\Gamma)} {(n-k)!},\quad \quad \text{for }n\geq1. 
\end{equation}
Following Grady and Newman \cite{GrNeAA}, our strategy consists in
showing that $v_p\big(h_n(\Gamma)/n!\big)>0$ for almost all $n$ in the cases
(1)--(7). Given that $p$-divisibility property, the recurrence
\eqref{eq:rec2} reduces to a finite-length linear recurrence with
constant coefficients
for the sequence $\big(s_n(\Gamma)\big)_{n\ge1}$ when
considered modulo~$p$. It then follows (see e.g.\ \cite[Ch.~8]{LiNie})
that $\big(s_n(\Gamma)\big)_{n\ge1}$ is an ultimately periodic
sequence modulo~$p$.

\medskip
(1) We have
$$
h_n(\Gamma)=h_n(\Gamma_0)h_n(G_1)h_n(G_2),
$$
and hence
\begin{equation} \label{eq:hG1} 
v_p\left(h_n(\Gamma)\right)\ge
v_p\left(h_n(G_1)\right)+v_p\left(h_n(G_2)\right).
\end{equation}
We have to show that 
\begin{equation} \label{eq:h>n!1} 
v_p\left(h_n(\Gamma)\right)>v_p(n!)
=\sum_{s\ge1}\fl{\frac {n} {p^s}}
\end{equation}
for all sufficiently large $n$.
By our $p$-divisibility results in Theorem~\ref{thm:rang}, we see
that
$$
v_p\left(h_n(G)\right)\ge \fl{\frac {n} {p}}-\fl{\frac {n} {p^2}}
$$
for any finite Abelian $p$-group $G$. Hence, 
\begin{equation*} 
v_p\left(h_n(G_1)\right)+v_p\left(h_n(G_2)\right)
\ge 2\left(\fl{\frac {n} {p}}-\fl{\frac {n} {p^2}}\right).
\end{equation*}
Now let $n=n_2p^2+n_1p+n_0$ with $0\le n_0,n_1<p$.
Then the above inequality becomes
\begin{equation} \label{eq:(1)a}  
v_p\left(h_n(G_1)\right)+v_p\left(h_n(G_2)\right)
\ge 2\left(n_2p+n_1-n_2\right),
\end{equation}
while
\begin{align} \notag
v_p(n!)&=(n_2p+n_1)+n_2+\sum_{s\ge3}\fl{\frac {n} {p^s}}\\
\notag
&=n_2(p+1)+n_1+\sum_{s\ge1}\fl{\frac {n_2} {p^s}}\\
\notag
&\le n_2(p+1)+n_1+\sum_{s\ge1}{\frac {n_2} {p^s}}\\
&\le n_2\left(p+1+\frac {1} {p-1}\right)+n_1.
\label{eq:(1)b}  
\end{align}
Since 
\begin{equation} \label{eq:pungl} 
2(p-1)> p+1+\frac {1} {p-1}
\end{equation}
for all $p\ge5$, a combination of 
\eqref{eq:hG1}, \eqref{eq:(1)a} and
\eqref{eq:(1)b} establishes \eqref{eq:h>n!1} as long as
$n_2>0$, that is, for $n\ge p^2$.

\medskip
(2) We proceed as in the proof of Item~(1). Without loss of
generality, let us assume that $G_2$ is not isomorphic to $C_3$.
By our $3$-divisibility results in Theorem~\ref{thm:rang}, it then
follows that
$$
v_3\left(h_n(G_2)\right)\ge 
\fl{\frac {n} {3}}+\fl{\frac {n} {9}}-2\fl{\frac {n} {27}}.
$$
Hence,
\begin{equation*} 
v_3\left(h_n(G_1)\right)+v_3\left(h_n(G_2)\right)
\ge 2\left(\fl{\frac {n} {3}}-\fl{\frac {n} {27}}\right).
\end{equation*}
Now let $n=27n_3+9n_2+3n_1+n_0$ with $0\le n_0,n_1,n_2<3$.
Then the above inequality becomes
\begin{align} \notag
v_3\left(h_n(G_1)\right)+v_3\left(h_n(G_2)\right)
&\ge 2\left(9n_3+3n_2+n_1-n_3\right)\\
&\ge 16n_3+6n_2+2n_1,
\label{eq:(2)a}  
\end{align}
while
\begin{align} \notag
v_3(n!)&=(9n_3+3n_2+n_1)+(3n_3+n_2)+n_3+\sum_{s\ge4}\fl{\frac {n} {3^s}}\\
\notag
&=13n_3+4n_2+n_1+\sum_{s\ge1}\fl{\frac {n_3} {3^s}}\\
\notag
&\le 13n_3+4n_2+n_1+\sum_{s\ge1}{\frac {n_3} {3^s}}\\
&\le \frac {27} {2}n_3+4n_2+n_1.
\label{eq:(2)b}  
\end{align}
A combination of 
\eqref{eq:hG1}, \eqref{eq:(2)a} and
\eqref{eq:(2)b} establishes \eqref{eq:h>n!1} as long as
$n_3>0$, that is, for $n\ge 27$.

\medskip
(3) This is completely analogous to the proof of Item~(1).
The only difference is that, instead of \eqref{eq:pungl},
here we rely on
$$
3(p-1)> p+1+\frac {1} {p-1},
$$
which is valid for all $p\ge3$, so in particular for $p=3$.

\medskip
(4) 
By our $2$-divisibility results in Theorems~\ref{thm:rang}
and \ref{thm:rang2}, one can see that
$$
v_2\big(h_n(G_1)\big)\ge 
\fl{\frac {n} {2}}+\fl{\frac {n} {4}}.
$$
Hence,
\begin{equation*} 
v_2\big(h_n(C_2)\big)+v_2\big(h_n(G_1)\big)
\ge 2\fl{\frac {n} {2}}.
\end{equation*}
Now let $n=2n_1+n_0$ with $0\le n_0<2$.
Then the above inequality becomes
\begin{equation}  
v_2\left(h_n(C_2)\right)+v_2\left(h_n(G_1)\right)
\ge 2n_1,
\label{eq:(4)a}  
\end{equation}
while
\begin{align} \notag
v_2(n!)&=n_1+\sum_{s\ge2}\fl{\frac {n} {2^s}}\\
\notag
&=n_1+\sum_{s\ge1}\fl{\frac {n_1} {2^s}}\\
&< 2n_1,
\label{eq:(4)b}  
\end{align}
as long as $n_1>0$.
A combination of 
\eqref{eq:hG1}, \eqref{eq:(4)a} and
\eqref{eq:(4)b} establishes \eqref{eq:h>n!1} as long as
$n_1>0$, that is, for $n\ge 2$.

\medskip
(5) Again, we proceed as in the proof of Item~(1). 
By our $2$-divisibility results in Theorems~\ref{thm:rang}
and \ref{thm:rang2}, it follows that
$$
v_2\big(h_n(G)\big)\ge 
\fl{\frac {n} {2}}+\fl{\frac {n} {4}}-2\fl{\frac {n} {8}}
$$
for any finite Abelian $2$-group not isomorphic to $C_2$.
Hence,
\begin{equation*} 
v_2\big(h_n(G_1)\big)+v_2\big(h_n(G_2)\big)
\ge 2\fl{\frac {n} {2}}+2\fl{\frac {n} {4}}-4\fl{\frac {n} {8}}.
\end{equation*}
Now let $n=8n_3+4n_2+2n_1+n_0$ with $0\le n_0,n_1,n_2<2$.
Then the above inequality becomes
\begin{align} \notag
v_2\big(h_n(G_1)\big)+v_2\big(h_n(G_2)\big)
&\ge 2(4n_3+2n_2+n_1)+2(2n_3+n_2)-4n_3\\
&\ge 8n_3+6n_2+2n_1,
\label{eq:(5)a}  
\end{align}
while
\begin{align} \notag
v_2(n!)&=(4n_3+2n_2+n_1)+(2n_3+n_2)+n_3+\sum_{s\ge4}\fl{\frac {n} {2^s}}\\
\notag
&=7n_3+3n_2+n_1+\sum_{s\ge1}\fl{\frac {n_3} {2^s}}\\
&< 8n_3+3n_2+n_1,
\label{eq:(5)b}  
\end{align}
as long as $n_3>0$.
A combination of 
\eqref{eq:hG1}, \eqref{eq:(5)a} and
\eqref{eq:(5)b} establishes \eqref{eq:h>n!1} as long as
$n_3>0$, that is, for $n\ge 8$.

\medskip
(6) We proceed as in the proof of Item~(1). 
Here, we have to show that
\begin{equation} \label{eq:h>n!4} 
v_2\big(h_n(G_1)\big)+v_2\big(h_n(G_2)\big)
+v_2\big(h_n(G_3)\big)
>\sum_{s\ge1}\fl{\frac {n} {2^s}}
\end{equation}
for all sufficiently large $n$.
Without loss of
generality, we assume that $G_3$ is not isomorphic to $C_2$.
By our $2$-divisibility results in Theorems~\ref{thm:rang}
and \ref{thm:rang2}, it then follows that
$$
v_2\big(h_n(G_3)\big)\ge 
\fl{\frac {n} {2}}+\fl{\frac {n} {4}}-2\fl{\frac {n} {8}}.
$$
Hence,
\begin{equation*} 
v_2\big(h_n(G_1)\big)+v_2\big(h_n(G_2)\big)
+v_2\big(h_n(G_3)\big)
\ge 3\fl{\frac {n} {2}}-\fl{\frac {n} {4}}-2\fl{\frac {n} {8}}.
\end{equation*}
Now let $n=8n_3+4n_2+2n_1+n_0$ with $0\le n_0,n_1,n_2<2$.
Then the above inequality becomes
\begin{align} \notag
v_2\big(h_n(G_1)\big)+v_2\big(h_n(G_2)\big)
&+v_2\big(h_n(G_3)\big)\\
\notag
&\ge 3(4n_3+2n_2+n_1)-(2n_3+n_2)-2n_3\\
&\ge 8n_3+5n_2+3n_1,
\label{eq:(6)a}  
\end{align}
A combination of 
\eqref{eq:hG1}, \eqref{eq:(6)a} and
\eqref{eq:(5)b} establishes \eqref{eq:h>n!1} as long as
$n_3>0$, that is, for $n\ge 8$.

\medskip
(7) This can be established in the same manner as before.
As a matter of fact, this had already been done earlier in
\cite[Theorem~1]{GrNeAB}. In fact, there the stronger result
is shown that all $s_n(\Gamma)$'s are odd. Indeed, this is
a consequence of considering the relation
\eqref{Eq:HallTransform}, viewed as a recurrence relation
for the $s_n(\Gamma)$'s, modulo~$2$.
\end{proof}

\section{The $p$-divisibility of permutation numbers}
\label{sec:perm}

It is well-known (see e.g.\ \cite[Eq.~(5.30)]{StanBI}) that
\begin{equation} \label{eq:ZykPerm} 
\sum_{n\ge0}\frac {z^n} {n!}
\sum_{\si\in S_n}
\prod _{i\ge1} ^{}x_i^{\#(\text{cycles of length $i$ in $\si$})}
=
\exp\left(\sum_{n\ge1}\frac {x_n} {n}z^n\right).
\end{equation}
In the language of species (cf.\ \cite{BLL}), this is the
{\it cycle index series} of the species of permutations.
Evidently, there are numerous specialisations of this formula
which, in combination with our $p$-divisibility results in
Sections~\ref{sec:pdiv1} and \ref{sec:pdiv2}, lead to
$p$-divisibility results for numbers of permutations with
restrictions on their cycle lengths. In this section, we present 
three prototypical such theorems.

\begin{theorem} \label{thm:perm1}
Let\/ $A$ be a subset of the positive integers, 
let $p$ be a prime number, and let $l$ be
a positive integer.
Furthermore, let $\Pi_1(n;p,l,A)$ be the number of permutations
of $\{1,2,\dots,n\}$
whose cycle lengths are in 
\begin{equation} \label{eq:PiA} 
\{ap^s:a\in A\text{ \em and }ap^s<p^l\}.
\end{equation}
Then $\Pi_1(n;p,l,A)$ is divisible by $p^{e_p(n;l)},$ where
$$
e_p(n;l)=\sum_{s=1}^{l-1}\fl{\frac {n} {p^{s}}}-(l-1)\fl{\frac {n} {p^l}}.
$$
\end{theorem}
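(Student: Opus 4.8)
The plan is to read off the generating function of $\Pi_1(n;p,l,A)$ from the cycle index series \eqref{eq:ZykPerm} and feed it into Corollary~\ref{thm:pdiv10}. Write $B=\{ap^s:a\in A,\ ap^s<p^l\}$ for the set of admissible cycle lengths, and specialise \eqref{eq:ZykPerm} by setting $x_i=1$ for $i\in B$ and $x_i=0$ otherwise. With the convention $0^0=1$, the inner sum $\sum_{\sigma\in S_n}\prod_{i\ge1}x_i^{c_i(\sigma)}$, where $c_i(\sigma)$ is the number of cycles of length $i$ in $\sigma$, collapses to the number of $\sigma\in S_n$ all of whose cycle lengths lie in $B$, that is, to $\Pi_1(n;p,l,A)$. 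Hence \eqref{eq:ZykPerm} becomes $H(z)=\exp(S(z))$ with $h_n=\Pi_1(n;p,l,A)$ and $S(z)=\sum_{n\in B}\frac{z^n}{n}=\sum_{n\ge1}\frac{s_n}{n}z^n$, where $s_n=1$ if $n\in B$ and $s_n=0$ otherwise. Once the hypotheses of Corollary~\ref{thm:pdiv10} are checked, the bound \eqref{eq:vpH10} yields $v_p(h_n)\ge e_p(n;l)$; since each $h_n$ is a non-negative integer, this is exactly the asserted divisibility $p^{e_p(n;l)}\mid\Pi_1(n;p,l,A)$.

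It then remains to verify the two hypotheses of Corollary~\ref{thm:pdiv10}. The integrality $s_n\in\Z_p$ is immediate, since every $s_n$ lies in $\{0,1\}$. The real content is the truncated Dwork condition \eqref{eq:pS10}, i.e.\ that all coefficients of $S(z^p)-pS(z)$ up to (but not including) that of $z^{p^l}$ lie in $p\Z_p$. I would compute the coefficient of $z^N$ in $S(z^p)-pS(z)$ directly: it equals $-\frac{p\,s_N}{N}$ when $p\nmid N$, and $\frac{p(s_{N/p}-s_N)}{N}$ when $p\mid N$. In the first case $N$ is a $p$-adic unit, so the coefficient lies in $p\Z_p$ with no further hypothesis. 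In the second case, writing $v_p(N)=a\ge1$, the coefficient lies in $p\Z_p$ if and only if $v_p(s_{N/p}-s_N)\ge a$; and since $s_{N/p}-s_N\in\{-1,0,1\}$, this holds precisely when $s_{N/p}=s_N$.

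Thus the whole theorem reduces to the combinatorial claim that $s_{N/p}=s_N$ for every $N<p^l$ with $p\mid N$, and I expect this to be the crux. I would establish it by observing that, among the integers in $[1,p^l)$, membership in $B$ depends only on the prime-to-$p$ part: an integer $m<p^l$ lies in $B$ exactly when $m/p^{v_p(m)}\in A$, because in the representation $m=ap^s$ the factor $a\in A$, being prime to $p$, must equal $m/p^{v_p(m)}$, with $s=v_p(m)$. Consequently, for $p\mid N$ the numbers $N$ and $N/p$ share the same prime-to-$p$ part $N/p^{v_p(N)}$, so $N\in B$ if and only if $N/p\in B$, i.e.\ $s_{N/p}=s_N$; equivalently, $B$ is stable under both multiplication and division by $p$ within $[1,p^l)$. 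With this in hand, \eqref{eq:pS10} holds with a suitable $J(z)\in\Z_p[z]$, Corollary~\ref{thm:pdiv10} applies, and the divisibility $p^{e_p(n;l)}\mid\Pi_1(n;p,l,A)$ follows.
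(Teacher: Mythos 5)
Your proposal is correct and takes exactly the route of the paper's own proof: specialise the cycle index series \eqref{eq:ZykPerm} by setting $x_i=1$ for $i$ in the set \eqref{eq:PiA} and $x_i=0$ otherwise, identify the resulting $h_n$ with $\Pi_1(n;p,l,A)$, and invoke Corollary~\ref{thm:pdiv10}, the bound \eqref{eq:vpH10} being the assertion. The only difference is one of completeness: the paper declares the verification of the hypotheses ``straightforward,'' whereas you actually carry it out, correctly reducing Condition~\eqref{eq:pS10} to the statement that the set \eqref{eq:PiA} is closed under division by $p$ within $[1,p^l)$.

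One caveat deserves mention. Your verification of that closure property asserts that the factor $a\in A$ is ``prime to $p$,'' which the statement of the theorem does not grant, since $A$ is an arbitrary set of positive integers. If $A$ may contain multiples of $p$, both the closure property and the theorem itself fail: for $A=\{p\}$ and $l=2$ the admissible cycle lengths form the set $\{p\}$, so that $\Pi_1(p;p,2,A)=(p-1)!$, while $e_p(p;2)=\fl{p/p}-\fl{p/p^2}=1$, and $p\nmid(p-1)!$; correspondingly, the coefficient of $z^p$ in $S(z^p)-pS(z)$ equals $s_1-s_p=-1\notin p\Z_p$, so \eqref{eq:pS10} is violated. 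Thus coprimality of the elements of $A$ to $p$ (equivalently, reading $a$ as the prime-to-$p$ part of the cycle length) is an implicit hypothesis of the theorem --- it is the reading used in the paper's own application with $A=\{1\}$ to homomorphisms of $C_{p^l}$. Under that reading your proof is complete; the imprecision lies in the theorem's statement (and equally affects the paper's ``straightforward'' claim), not in your argument.
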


\begin{proof}
In \eqref{eq:ZykPerm},
we set $x_i=1$ for all $i$ in the set \eqref{eq:PiA} and
$x_i=0$ for all other~$i$.
It is then straightforward to see that
all assumptions of Corollary~\ref{thm:pdiv10} are satisfied.
The assertion of the theorem then follows from \eqref{eq:vpH10}.
\end{proof}

Clearly, the special case of Theorem~\ref{thm:perm1} 
in which we consider permutations in $S_n$ 
whose cycle lengths are powers $p^s$ of
a given prime number $p$, with $0\le s\le l$, corresponds to the
enumeration of representations of $C_{p^l}$ in $S_n$. 
Thus, Theorem~\ref{thm:rang} with $r=1$ --- found earlier by
Katsurada, Takegahara and Yoshida \cite[Theorems~1.2]{KaTYAA} ---
is the simplest instance of the above theorem.

Another special case which is worth discussing explicitly is the
case of permutations in $S_n$ whose cycle lengths are strictly less than~$N$,
where $N$ is some given positive integer. 
By application of Theorem~\ref{thm:perm2}, we conclude
that the number of these permutations is divisible by
$$
\underset{p\text{ prime}}{\prod _{p\le N} ^{}}
p^{\sum_{s=1}^{\fl{\log_pN}-1}
\fl{{n} /{p^{s}}}-\left(\fl{\log_pN}-1\right)
\fl{ {n} \left/{p^{\fl{\log_pN}}}\right.}}.
$$

\begin{theorem} \label{thm:perm2}
Let\/ $A$ be a subset of the positive integers, 
let $p$ be a prime number, and let $l$ be
a positive integer.
Furthermore, let $\Pi_2(n;p,l,A)$ be the number of permutations
of $\{1,2,\dots,n\}$
whose cycle lengths are in 
\begin{equation} \label{eq:PiA2} 
\{ap^s:a\in A\text{ \em and }ap^s\le p^l\}.
\end{equation}
If $p\ge3$ and $(p,l)\ne(3,1),$ the number
$\Pi_2(n;p,l,A)$ is divisible by $p^{f_p(n;l)},$ where
$$
f_p(n;l)=
\sum_{s\ge1}\fl{\frac {n} {p^{s}}}
-(l-1)\fl{\frac {n} {p^l}}-\sum_{s \ge l}\fl{\frac {n} {2p^{s}}},
$$
while the number
$\Pi_2(n;3,1,A)$ is divisible by $3^{f_3(n)},$ where
$$
f_3(n)=
\sum_{s \ge 1}
\left(\fl{\frac {n} {3^{s}}} 
-\fl{\frac {n} {2\cdot3^{s}}} \right)
-\fl{\frac {n} {18}} .
$$
Finally, the number
$\Pi_2(n;2,l,A)$ is divisible by $2^{f_2(n)},$ where
$$
f_2(n)=
\begin{cases} 
\fl{\frac {n} {2}}
-\fl{\frac {n} {4}}
&\text{if\/ }l=1,\\[1mm]
\fl{\frac {n} {2}}
&\text{if\/ }l=2,\\[1mm]
\sum_{s=1}^{l+1}\fl{\frac {n} {2^{s}}}
-(l-1)\fl{\frac {n} {2^l}}
&\text{if\/ }l\ge3.
\end{cases}
$$
\end{theorem}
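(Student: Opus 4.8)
The plan is to read off all three divisibility statements from the general results of Section~\ref{sec:pdiv2} after a single specialisation of the cycle index series \eqref{eq:ZykPerm}. Writing $B=\{ap^s:a\in A\text{ and }ap^s\le p^l\}$ for the admissible set of cycle lengths, I would substitute $x_i=1$ for $i\in B$ and $x_i=0$ otherwise in \eqref{eq:ZykPerm}. The left-hand side then becomes $\sum_{n\ge0}\frac{\Pi_2(n;p,l,A)}{n!}z^n$, while the right-hand side becomes $\exp\big(S(z)\big)$ with $S(z)=\sum_{i\in B}\frac{z^i}{i}=\sum_{n\ge1}\frac{s_n}{n}z^n$, where $s_n=1$ if $n\in B$ and $s_n=0$ otherwise. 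In particular $s_n\in\{0,1\}\subseteq\Z_p$ for all $n$, and $h_n=\Pi_2(n;p,l,A)$ in the notation of Theorems~\ref{thm:pdiv2}, \ref{thm:pdiv31} and \ref{thm:pdiv22}.

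The crux of the argument is to verify the truncation hypothesis common to all three theorems, namely $S(z^p)-pS(z)=pJ(z)+O(z^{p^l+1})$ with $J(z)\in\Z_p[z]$; that is, that every coefficient of $z^m$ with $1\le m\le p^l$ in $S(z^p)-pS(z)$ lies in $p\Z_p$. A direct computation gives that this coefficient equals $\frac{p}{m}\big(\chi(p\mid m\text{ and }m/p\in B)-\chi(m\in B)\big)$, using the indicator notation $\chi$ of Theorem~\ref{thm:pdiv12}. When $p\nmid m$, only the second term can be non-zero, and then $v_p\big(\frac{p}{m}\big)=1$, so the coefficient lies in $p\Z_p$ (these terms constitute $pJ(z)$). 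When $p\mid m$ and $m\le p^l$, I would invoke the key structural property that $B$ is closed under division by $p$ in this range, i.e.\ $m\in B\iff m/p\in B$: an admissible representation $m=ap^s\le p^l$ with $s\ge1$ descends to $m/p=ap^{s-1}\in B$, and conversely a representation $m/p=ap^s\le p^l$ lifts to $m=ap^{s+1}\le p^l$. Consequently the indicator difference vanishes and the coefficient is $0$, establishing the required truncation up to and including the term $z^{p^l}$.

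It then remains to invoke the appropriate theorem of Section~\ref{sec:pdiv2} according to the three regimes and to match the resulting bound with the claimed exponent. For $p\ge3$ with $(p,l)\ne(3,1)$, Theorem~\ref{thm:pdiv2} applies, and its bound \eqref{eq:vpH2} is exactly $f_p(n;l)$; for $(p,l)=(3,1)$, Theorem~\ref{thm:pdiv31} applies, and \eqref{eq:vpH31} is exactly $f_3(n)$; and for $p=2$, Theorem~\ref{thm:pdiv22} applies, and \eqref{eq:vpH22}, split into the cases $l=1$, $l=2$ and $l\ge3$, is exactly $f_2(n)$. In each case $v_p\big(\Pi_2(n;p,l,A)\big)=v_p(h_n)$ is at least the stated exponent, which is precisely the asserted divisibility.

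The main obstacle I anticipate is the verification in the second paragraph: the whole argument hinges on the closure of $B$ under division by $p$ below $p^l$, which is exactly what forces the ``dangerous'' coefficients (those indexed by $m$ with $p\mid m$, where $v_p(p/m)\le0$) to cancel. This is also the place where the non-strict threshold $ap^s\le p^l$ and the chain structure of $B$ enter in an essential way; everything else is a routine transcription of the hypotheses and conclusions of the three cited theorems.
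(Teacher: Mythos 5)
Your proposal follows exactly the paper's route: specialise the cycle index series \eqref{eq:ZykPerm} by setting $x_i=1$ for $i$ in the admissible set $B$ and $x_i=0$ otherwise, verify the truncated Dwork condition $S(z^p)-pS(z)=pJ(z)+O\left(z^{p^l+1}\right)$ with $J(z)\in\Z_p[z]$, and then quote Theorem~\ref{thm:pdiv2} for $p\ge3$, $(p,l)\ne(3,1)$, Theorem~\ref{thm:pdiv31} for $(p,l)=(3,1)$, and Theorem~\ref{thm:pdiv22} for $p=2$. The paper's own proof is a two-line reference (``completely analogous to the proof of Theorem~\ref{thm:perm1}''), and your explicit computation of the coefficient of $z^m$ in $S(z^p)-pS(z)$, namely $\frac{p}{m}\big(\chi(p\mid m\text{ and }m/p\in B)-\chi(m\in B)\big)$, together with the matching of \eqref{eq:vpH2}, \eqref{eq:vpH31}, \eqref{eq:vpH22} with $f_p$, $f_3$, $f_2$, is precisely the verification the paper leaves as ``straightforward''.

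However, your pivotal ``key structural property'' --- closure of $B=\{ap^s:a\in A,\ ap^s\le p^l\}$ under division by $p$ --- is argued with a genuine hole. Your descent step, ``an admissible representation $m=ap^s\le p^l$ with $s\ge1$ descends to $m/p=ap^{s-1}\in B$,'' silently assumes that $m$ admits a representation with $s\ge1$. If $A$ contains multiples of $p$, the only representation may have $s=0$, and then $m/p$ need not lie in $B$. Concretely, take $p\ge5$, $l=1$, $A=\{p\}$: then $B=\{p\}$ but $1\notin B$, the coefficient of $z^{p}$ in $S(z^p)-pS(z)$ equals $-1\notin p\Z_p$, so the truncation hypothesis fails --- and so does the conclusion, since $\Pi_2(p;p,1,\{p\})=(p-1)!$ is not divisible by $p=p^{f_p(p;1)}$. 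Thus the property you need is not just unproved but false in the stated generality; what rescues the argument (and the theorem) is the tacit assumption, suggested by the notation $ap^s$, that every $a\in A$ is coprime to $p$: then $m\in B$ with $p\mid m$ forces $s=v_p(m)\ge1$ in any representation, and your descent, as well as your (correct) lifting direction, go through verbatim. You should state this coprimality hypothesis explicitly; note that the same caveat applies to the paper's own Theorems~\ref{thm:perm1}--\ref{thm:perm3}, so apart from this shared gap your proof is complete and coincides with the intended one.
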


\begin{proof}
This is completely analogous to the proof of Theorem~\ref{thm:perm1}.
The only difference is that, instead of Corollary~\ref{thm:pdiv10},
one applies Theorems~\ref{thm:pdiv2}--\ref{thm:pdiv22}.
\end{proof}

\begin{theorem} \label{thm:perm3}
Let\/ $A$ be a subset of the positive integers, 
let $p$ be a prime number, and let $l$ be
a positive integer.
Furthermore, let $\Pi_3(n;p,l,A)$ be the number of permutations
of $\{1,2,\dots,n\}$
whose cycle lengths are in 
\begin{equation} \label{eq:PiA3} 
\{ap^s:a\in A\text{ \em and }ap^s< 2p^l\}.
\end{equation}
If $p\ge3$ and $(p,l)\ne(3,1),$ the number
$\Pi_3(n;p,l,A)$ is divisible by $p^{g_p(n;l)},$ where
$$
g_p(n;l)=
\sum_{s\ge1}\fl{\frac {n} {p^{s}}}
-(l-1)\cl{\frac {n} {2p^l}}-\sum_{s \ge l}\fl{\frac {n} {2p^{s}}} .
$$
\end{theorem}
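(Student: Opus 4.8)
The plan is to obtain Theorem~\ref{thm:perm3} as a direct specialisation of the abstract estimate in Theorem~\ref{thm:pdiv3}, in complete parallel to the way Theorems~\ref{thm:perm1} and \ref{thm:perm2} were deduced from Corollary~\ref{thm:pdiv10} and from Theorems~\ref{thm:pdiv2}--\ref{thm:pdiv22}. Concretely, in the cycle index identity \eqref{eq:ZykPerm} I would put $x_i=1$ for every $i$ belonging to the set \eqref{eq:PiA3} and $x_i=0$ for all other~$i$. With this substitution the left-hand side of \eqref{eq:ZykPerm} becomes $\sum_{n\ge0}\frac{\Pi_3(n;p,l,A)}{n!}z^n$, so that the abstract numbers $h_n$ associated with $S(z)=\sum_{n\ge1}\frac{s_n}{n}z^n$, where $s_n=x_n\in\{0,1\}\subseteq\Z_p$, are exactly the permutation numbers $\Pi_3(n;p,l,A)$. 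Since the exponent $g_p(n;l)$ in the statement is verbatim the right-hand side of \eqref{eq:vpH3}, the theorem follows as soon as the hypotheses of Theorem~\ref{thm:pdiv3} have been checked for this~$S(z)$.

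All the substance is thus concentrated in verifying Condition~\eqref{eq:pS3}, that is, $S(z^p)-pS(z)=pJ(z)+O(z^{2p^l})$ with $J(z)\in\Z_p[z]$; the remaining requirements $p\ge3$, $(p,l)\neq(3,1)$ and $s_n\in\Z_p$ are given outright. Extracting the coefficient of $z^N$ in $S(z^p)-pS(z)$ produces $-\frac{p\,s_N}{N}$ when $p\nmid N$, and $\frac{p\,(s_{N/p}-s_N)}{N}$ when $p\mid N$. In the first case the coefficient lies in $p\Z_p$ automatically, because $N$ is then a $p$-adic unit. In the second case, setting $a=v_p(N)\ge1$, the coefficient lies in $p\Z_p$ precisely when $s_{N/p}\equiv s_N\pmod{p^{a}\Z_p}$; and since $s_{N/p},s_N\in\{0,1\}$, this congruence is equivalent to the plain equality $s_{N/p}=s_N$. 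Hence \eqref{eq:pS3} reduces to the single combinatorial statement that, for every multiple $N$ of $p$ with $N<2p^l$, the integer $N$ is an admissible cycle length if and only if $N/p$ is.

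The main obstacle is exactly this closure-under-division-by-$p$ property of the admissible set \eqref{eq:PiA3} in the range $N<2p^l$. One direction is immediate: if $N/p=ap^s$ is admissible then $N=ap^{s+1}<2p^l$ is admissible as well, both numbers being below $2p^l$ because $N/p<N<2p^l$. The delicate direction is the converse, which is where the arithmetic shape of~$A$ genuinely enters the argument, and is the step I would treat with the most care; once it is secured, every $N<2p^l$ divisible by~$p$ satisfies $s_{N/p}=s_N$, Condition~\eqref{eq:pS3} holds, and Theorem~\ref{thm:pdiv3} delivers $v_p\big(\Pi_3(n;p,l,A)\big)=v_p(h_n)\ge g_p(n;l)$ for all~$n$, which is the assertion. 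Everything outside this closure check is a verbatim transcription of \eqref{eq:vpH3}, so no further estimation is needed.
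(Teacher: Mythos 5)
Your route is exactly the paper's: specialise \eqref{eq:ZykPerm} by putting $x_i=1$ on the set \eqref{eq:PiA3} and $x_i=0$ elsewhere, and apply Theorem~\ref{thm:pdiv3} to the resulting $S(z)$ --- the paper's proof consists of precisely these two sentences, referring back to the proof of Theorem~\ref{thm:perm1}. Your explicit reduction of Condition~\eqref{eq:pS3} --- coefficientwise, $-p\,s_N/N$ for $p\nmid N$ and $p\,(s_{N/p}-s_N)/N$ for $p\mid N$, so that \eqref{eq:pS3} amounts to the equality $s_{N/p}=s_N$ for all $N<2p^l$ with $p\mid N$ --- is correct, and is in fact more careful than the paper's ``it is straightforward to see that all assumptions are satisfied.''

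However, your proof stops at exactly the step that carries all the content. The direction ``$N$ admissible $\Rightarrow N/p$ admissible'' is declared delicate and is never established --- and it cannot be established from the stated hypotheses, because for an arbitrary subset $A$ of the positive integers it is false, and with it the theorem's conclusion. Take $A=\{p\}$, $l=1$, $p\ge5$: the set \eqref{eq:PiA3} is just $\{p\}$, so $s_p=1$ while $s_1=0$; Condition~\eqref{eq:pS3} fails at the coefficient of $z^p$, and indeed $\Pi_3(p;p,1,A)=(p-1)!$ is not divisible by $p^{g_p(p;1)}=p$. The statement (like Theorems~\ref{thm:perm1} and \ref{thm:perm2}) must be read with the tacit hypothesis that every element of $A$ is coprime to $p$, so that $ap^s$ is the decomposition of a cycle length into its prime-to-$p$ part and its $p$-part. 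Under that reading, the step you postponed is a one-liner that you should simply write out: if $N=ap^s<2p^l$ is admissible and $p\mid N$, then $p\nmid a$ forces $s\ge1$, whence $N/p=ap^{s-1}<2p^l$ is again admissible. Adding this sentence, and making the coprimality hypothesis explicit, closes the gap; everything else in your argument, including the final appeal to \eqref{eq:vpH3}, is fine and coincides with the paper's proof.
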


\begin{proof}
This is completely analogous to the proof of Theorem~\ref{thm:perm1}.
Here, we apply Theorem~\ref{thm:pdiv3}.
\end{proof}

\section{A supercongruence}
\label{sec:super}

In the past few years it has become fashionable to call congruences
modulo prime powers $p^e$, where the exponent $e$ is at least~$2$,
{\it``supercongruences"}. The specialisation of
Corollary~\ref{thm:pdiv10} where $s_1=s_p=1$ and all other $s_n$'s
are set equal to zero leads to such a congruence.\footnote{Since
in the modulus of our congruence the exponent of $p$ even grows
with $p$, we are tempted to call this a ``supersupercongruence,"
but refrain from doing so.}

\begin{theorem} \label{eq:p-id}
For all primes $p$ and positive integers $a,b,c$
with $0\le b,c<p,$ we have
\begin{multline} \label{eq:idpm}
\sum_{s=0}^{pa+b} 
\frac {(p^2a+pb+c)!} {p^{pa+b - s}\left(pa+b - s\right)!\,(p s+c)!}\\
\equiv (-1)^ap^{(p-1)a}
\sum_{s=0}^{b} 
\frac {(pb+c)!} {p^{b - s}\left(b - s\right)!\,(p s+c)!}
\quad 
\left(\text{\em mod $p^{(p-1)a+b+1}$}\right).
\end{multline}
\end{theorem}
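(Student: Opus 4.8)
The plan is to recognise both sides of \eqref{eq:idpm} as values $h_n$ of the exponential attached to the very simple series $S(z)=z+\frac{z^p}{p}$, and then to apply Corollary~\ref{thm:pdiv10}. Indeed, the specialisation $s_1=s_p=1$ and $s_n=0$ otherwise gives $S(z)=z+\frac{z^p}{p}$ and hence $H(z)=e^z\,e^{z^p/p}$, so that $h_n=n!\,[z^n]H(z)=\sum_{k\ge0}\frac{n!}{p^k\,k!\,(n-pk)!}$, the sum ranging over $0\le k\le\fl{n/p}$. (By \eqref{eq:ZykPerm} this $h_n$ is the number of permutations of $\{1,\dots,n\}$ all of whose cycles have length $1$ or $p$; in any case each summand equals $\binom{n}{pk}\frac{(pk)!}{p^k\,k!}$, a product of integers, so $h_n\in\Z$.)

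First I would perform the reindexing $k=pa+b-s$ in the left-hand sum of \eqref{eq:idpm}. Writing $N=p^2a+pb+c$, one checks that $p^{pa+b-s}=p^k$, $(pa+b-s)!=k!$, and $ps+c=N-pk$, so the summand becomes $\frac{N!}{p^k\,k!\,(N-pk)!}$, with $k$ now running from $0$ to $pa+b=\fl{N/p}$. Thus the left-hand side is exactly $h_N$. The same substitution $k=b-s$ identifies the right-hand sum with $h_M$, where $M=pb+c$. Consequently \eqref{eq:idpm} is equivalent to the congruence $h_N\equiv(-1)^a p^{(p-1)a}h_M\pmod{p^{(p-1)a+b+1}}$.

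Next I would verify the hypotheses of Corollary~\ref{thm:pdiv10} with $l=2$: a direct computation gives $S(z^p)-pS(z)=-pz+\frac{z^{p^2}}{p}=pJ(z)+O(z^{p^2})$ with $J(z)=-z\in\Z_p[z]$, so \eqref{eq:pS10} holds. For this $l$ the weight in \eqref{eq:vpH10} is $e_p(n;2)=\fl{n/p}-\fl{n/p^2}$; using $0\le b,c<p$ I would compute $\fl{N/p}=pa+b$, $\fl{N/p^2}=a$, hence $e_p(N;2)=(p-1)a+b$, and likewise $e_p(M;2)=b$. The recurrence \eqref{eq:Qrek10} for the quotients $Q_n=h_n/p^{e_p(n;2)}$ then reads $Q_n\equiv(-1)^2(s_{p^2}-s_p)Q_{n-p^2}=-Q_{n-p^2}\pmod{p\Z_p}$.

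Finally, iterating this congruence $a$ times along $N,\,N-p^2,\,\dots,\,N-ap^2=M$ gives $Q_N\equiv(-1)^a Q_M\pmod{p\Z_p}$; here each intermediate index is of the form $a'p^2+(pb+c)$ with $1\le a'\le a$, so the recurrence is being applied only where it is valid. Unravelling $Q_N=h_N/p^{(p-1)a+b}$ and $Q_M=h_M/p^{b}$ and clearing the denominator $p^{(p-1)a+b}$ converts $Q_N\equiv(-1)^a Q_M\pmod{p\Z_p}$ into $h_N\equiv(-1)^a p^{(p-1)a}h_M\pmod{p^{(p-1)a+b+1}}$, which is precisely \eqref{eq:idpm}. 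The only points requiring care are the floor computations for $e_p(N;2)$ and $e_p(M;2)$ and the check that the iteration never leaves the range of validity of \eqref{eq:Qrek10}; the remainder is the routine reindexing described above, so I expect no serious obstacle.
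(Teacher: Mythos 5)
Your proposal is correct and is essentially the paper's own proof: both recognise the two sums as $h_N$ and $h_M$ for the exponential of $S(z)=z+\frac{z^p}{p}$, apply Corollary~\ref{thm:pdiv10} with $l=2$ (so that $e_p(N;2)=(p-1)a+b$ and $e_p(M;2)=b$), and iterate the recurrence \eqref{eq:Qrek10} $a$ times to get $Q_N\equiv(-1)^aQ_M\pmod{p\Z_p}$ before clearing denominators. Your write-up merely makes explicit the reindexing and the iteration that the paper leaves implicit.
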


\begin{proof}
Let $S(z)=z+\frac {z^p}p$ and $H(z)=\sum_{n\ge0}\frac {h_n} {n!}z^n
=\exp(S(z))$. Expansion of $H(z)$, 
$$
H(z)=\exp(z)\exp\left(\frac {z^p} {p}\right)
=\sum_{s=0}^\infty \frac {z^s} {s!}
\sum_{t=0}^\infty \frac {z^{pt}} {p^t\,t!},
$$
and comparison of coefficients lead to
\begin{equation} \label{eq:hp^2N} 
h_{p^2a+pb+c}
=\sum_{s=0}^{pa+b} \frac {\big(p^2a+pb+c\big)!} 
{(ps+c)!\,p^{pa+b - s}\,\left(pa+b - s\right)!}.
\end{equation}
Moreover, by Corollary~\ref{thm:pdiv10} with $l=2$ and $n=p^2a+pb+c$,
we know that the $p$-adic valuation of 
$h_{p^2a+pb+c}$ is at least
$(p-1)a+b$, and that the quotient
$Q_{p^2a+pb+c}=h_{p^2a+pb+c}/p^{(p-1)a+b}$
satisfies
\begin{equation} \label{eq:pm}
Q_{p^2a+pb+c}\equiv (-1)^{a}Q_{pb+c} \quad (\text{mod }p).
\end{equation}
Hence, when both sides of \eqref{eq:hp^2N} are 
reduced modulo $p^{(p-1)a+b+1}$, and \eqref{eq:pm}
is used on the left-hand side, the result is \eqref{eq:idpm}.
\end{proof}

\begin{remark}
By taking more terms of the Artin--Hasse exponential 
$\sum_{n\ge0}z^{p^n}/p^n$ (cf.\ \cite[Sec.~7.2]{RobeAA} for more
information),
the above theorem can be generalised to supercongruences for
multisums.
\end{remark}

\end{document}